\numberwithin{equation}{section}
\def\si{\Sigma}
\def\ofs{\mathcal O_{F\!,\e \si}}
\def\ofss{\mathcal O_{\!F\!,\e \si}^{\e *}}
\def\oks{\mathcal O_{\be K,\e \si}}
\def\okss{\mathcal O_{\! K\!,\e \si}^{\e *}}
\def\pic{{\rm{Pic}}\,}
\def\br{{\rm{Br}}\e}
\newcommand\brp{{\rm Br}^{\le\prime}}
\newcommand{\isoto}{\overset{\!\be\sim}{\to}}
\newcommand{\et}{{\rm {\acute et}}}
\DeclareMathAlphabet{\mathbbmsl}{U}{bbm}{m}{sl}
\newcommand{\fl}{{\rm fl}}
\newcommand{\spp}{S^{\le\lle\prime}}
\def\ses{S_{\et}^{\e\sim}}
\def\sfs{S_{\fl}^{\e\sim}}
\def\bg{{\mathbb G}}
\def\hom{{\rm{Hom}}\e}
\definecolor{labelkey}{rgb}{1,0,0}
\newcommand{\spec}{\mathrm{ Spec}\,}
\newcommand{\s}{\mathscr }
\def\ofs{\mathcal O_{\be F\lbe,\e \si}}
\def\oks{\mathcal O_{\lbe K,\e \si}}
\def\e{\kern 0.06em}
\def\be{\kern -.1em}
\def\le{\kern 0.03em}
\def\lle{\kern 0.015em}
\def\lbe{\kern -.025em}
\def\Z{\mathbb Z}
\newcommand{\sh}{\kern -.4em\phantom{a}^{\mathbf{\sim}}}
\newcommand{\lra}{\longrightarrow}
\def\be{\kern -.1em}
\def\le{\kern 0.03em}
\def\lle{\kern 0.04em}
\def\lbe{\kern -.025em}
\newcommand{\Q}{{\mathbb Q}}
\newcommand{\krn}{\mathrm{Ker}\e}
\newcommand{\cok}{\mathrm{Coker}\e}
\def\e{\kern 0.08em}
\newcommand{\img}{\mathrm{Im}\e}
\newcommand{\textcyr}[1]{%
{\fontencoding{OT2}\fontfamily{wncyr}\fontseries{m}\fontshape{n}
\selectfont #1}}
\newcommand{\sha}{{\mbox{\textcyr{Sh}}}}
\newtheorem{lemma}{Lemma}[section]
\newtheorem{theorem}[lemma]{Theorem}
\newtheorem{proposition-definition}[lemma]{Proposition-Definition}
\newtheorem{corollary}[lemma]{Corollary}
\newtheorem{proposition}[lemma]{Proposition}
\theoremstyle{definition}
\newtheorem{definition}[lemma]{Definition}
\theoremstyle{remark}
\newtheorem{remark}[lemma]{Remark}
\newtheorem{remarks}[lemma]{Remarks}
\newtheorem{examples}[lemma]{Examples}
\begin{document}

\input xy     
\xyoption{all}

\title[Restriction cokernels and norm one subgroups]{Cokernels of restriction maps and subgroups of norm one, with applications to quadratic Galois coverings}

\subjclass[2010]{Primary 14F20; Secondary 11R29, 14K15, 14F22}
	
\author{Cristian D. Gonz\'alez-Avil\'es}
\address{Departamento de Matem\'aticas, Universidad de La Serena,
La Serena, Chile} \email{cgonzalez@userena.cl}

\keywords{Restriction map, norm map, quadratic Galois cover, relative ideal class group, capitulation map, relative Brauer group, Tate-Shafarevich group}
	
\thanks{The author was partially supported by Fondecyt grant
1160004}

\maketitle

\begin{abstract} Let $f\colon \spp\to S$ be a finite and faithfully flat morphism of locally noetherian schemes of constant rank $n$ and let $G$ be a smooth, commutative and quasi-projective $S$-group scheme with connected fibers. For every $r\geq 1$, let ${\rm Res}_{\le G}^{(r)}\colon H^{\le r}\be(S_{\et},G\e)\to H^{\le r}\be(\spp_{\et},G\e)$ and ${\rm Cores}_{\le G}^{(r)}\colon H^{\le r}\be(\spp_{\et},G\e)\to H^{\le r}\be(S_{\et},G\e)$ be, respectively, the restriction and corestriction maps in \'etale cohomology induced by $f$. For certain pairs $(\le f, G\le)$, we construct maps $\alpha_{\lle r}\colon \krn {\rm Cores}_{\le G}^{(r)}\rightarrow \cok\e {\rm Res}_{\le G}^{(r)}$ and $\beta_{\lle r}\colon \cok\e {\rm Res}_{\le G}^{(r)}\rightarrow \krn {\rm Cores}_{\le G}^{(r)}$ such that $\alpha_{\lle r}\circ \beta_{\lle r}=\beta_{\lle r}\circ\e\alpha_{\lle r}=n$. In the simplest nontrivial case, namely when $f$ is a quadratic Galois covering, we identify the kernel and cokernel of $\beta_{\lle r}$ with the kernel and cokernel of another map $\cok\e {\rm Cores}_{\e G}^{(r-1)}\to\krn {\rm Res}_{G}^{(r+1)}$. We then discuss several applications, for example to the problem of comparing the (cohomological) Brauer group of a scheme $S$ to that of a quadratic Galois cover $\spp$ of $S$.
\end{abstract}

\section{Introduction}

Let $f\colon \spp\to S$ be a finite and faithfully flat morphism of locally noetherian schemes of constant rank $n\geq 2$ and let $G$ be a smooth, commutative and quasi-projective $S$-group scheme with connected fibers. In \cite{ga18b} we discussed the kernel of the canonical restriction map in \'etale cohomology ${\rm Res}_{\le G}^{(r)}\colon H^{\le r}\lbe(S_{\et},G\e)\to H^{\le r}\lbe(\spp_{\et},G\e)$ for every integer $r\ge 1$ and a broad class of pairs $(\le f, G\le)$ as above. In the present paper we discuss the cokernel of the preceding map and relate it to the kernel of the corestriction map ${\rm Cores}_{\le G}^{(r)}\colon H^{\le r}\lbe(\spp_{\et},G\e)\to H^{\le r}\lbe(S_{\et},G\e)$ in the same broad setting of \cite{ga18b}. More precisely, we show that the groups just mentioned are canonically related to each other via maps $\alpha_{\le r}\colon \krn {\rm Cores}_{\le G}^{(r)}\to \cok\e {\rm Res}_{\le G}^{(r)}$ and $\beta_{\le r}\colon \cok\e {\rm Res}_{\le G}^{(r)}\to\krn {\rm Cores}_{\le G}^{(r)}$ such that $\alpha_{\le r}\be\circ\be \beta_{\le r}$ and $\beta_{\le r}\be\circ\be\alpha_{\le r}$ are the multiplication by $n$ maps on their respective domains, where $n$ is the (constant) rank of $f$ (see Theorem \ref{th1}). The kernel (respectively, cokernel) of $\alpha_{\le r}$ is a quotient (respectively, subgroup) of $H^{\le r}(S_{\fl}, G_{\lbe n})$ (respectively, $H^{\le r+1}(S_{\fl}, G_{\lbe n})$), where $G_{\lbe n}$ is the $n$-torsion subgroup scheme of $G$ (see Proposition \ref{ka}). On the other hand, the kernel and cokernel of $\beta_{\le r}$ are related to the kernels and cokernels of certain other maps $c_{\le r}\colon \cok\e{\rm Cores}_{\e G}^{(r-1)}\to H^{\le r+1}\be(S_{\fl},G(n))$ and $d_{\le r}\colon H^{\le r}\be(S_{\fl},G(n))\to \krn {\rm Res}_{\e G}^{(r+1)}$, where $G(n)$ is the fppf sheaf on $S$ \eqref{gn} introduced in \cite{ga18b} (see Proposition \ref{ak}). When $f$ is a Galois covering, $G(n)$ is an $(\lbe\spp\!/\be S\lle)$-form of $G_{\be n}^{\e n-2}$ and thus trivial if $f$ is, in addition, quadratic, i.e., $n=2$ above. In this case (which is that discussed in \cite{kps} and \cite{ps}), Proposition \ref{ak} immediately yields the following statement, which is the main theorem of the paper.

\begin{theorem}\label{main0} {\rm ($=$Theorem \ref{main})} Let $(\e f,G\e)$ be an admissible quadratic Galois pair with  Galois group $\Delta=\{1,\tau\}$ (see Definition {\rm \ref{qadm}}). Then, for every integer $r\geq 1$, there exists a canonical exact sequence of abelian groups
\[
\begin{array}{rcl}
H^{\lle r}\be(S_{\et},G\e)\to H^{\le r}\be(\spp_{\et}, G\e)^{\Delta}&\to& \cok\e {\rm Cores}_{\e G}^{(r-1)}\to\krn {\rm Res}_{G}^{(r+1)}\\
&\to& {}_{N}\be H^{\le r}(\spp_{\et},G\e)/(1\!-\!\tau)H^{\le r}(\spp_{\et},G\e)\to 0,
\end{array}
\]
where ${}_{N}\be H^{\le r}\lbe(\spp_{\et},G\e)=\krn[\e{\rm Cores}_{\le G}^{(r)}\colon H^{\le r}\be(\spp_{\et},G\e)\to H^{\le r}\be(S_{\et},G\e)]$.
\end{theorem}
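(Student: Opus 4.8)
The plan is to deduce the theorem from Proposition \ref{ak} after specializing to the quadratic Galois situation, in which the auxiliary sheaf $G(n)$ disappears. Since $(\e f,G\e)$ is a quadratic Galois pair we have $n=2$, and, as recalled before the statement, $G(n)$ is then an $(\spp/S)$-form of $G_{\be n}^{\e n-2}=G_{\be 2}^{\e 0}$, i.e. of the trivial $S$-group scheme. Hence $G(n)=0$, so that $H^{\le i}(S_{\fl},G(n))=0$ and $H^{\le i}(S_{\et},G(n))=0$ for every $i$. I would substitute this vanishing into Proposition \ref{ak}: the target $H^{\le r+1}(S_{\fl},G(n))$ of $c_{\le r}$ and the source $H^{\le r}(S_{\fl},G(n))$ of $d_{\le r}$ both vanish, so the obstructions carried by those two maps disappear and the proposition reduces to an identification of $\krn\beta_{\le r}$ and $\cok\beta_{\le r}$ with the kernel and cokernel of a single induced map
\[
e_{\le r}\colon\cok\e{\rm Cores}_{\e G}^{(r-1)}\lra\krn{\rm Res}_{G}^{(r+1)}.
\]

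Next I would pin down the two endpoint terms by describing $\beta_{\le r}$ explicitly in the quadratic Galois case. One checks from its construction in Theorem \ref{th1} that $\beta_{\le r}\colon\cok{\rm Res}_{G}^{(r)}\to\krn{\rm Cores}_{G}^{(r)}$ is induced by $1-\tau$: this is well defined because $\img{\rm Res}_{G}^{(r)}\subseteq H^{\le r}(\spp_{\et},G)^{\Delta}=\krn(1-\tau)$, and its values lie in $\krn{\rm Cores}_{G}^{(r)}$ because the transfer is Galois invariant, ${\rm Cores}_{G}^{(r)}\circ\e\tau={\rm Cores}_{G}^{(r)}$, so that ${\rm Cores}_{G}^{(r)}\be((1-\tau)x)=0$. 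From this description one reads off that $\krn\beta_{\le r}$ is the image of $H^{\le r}(\spp_{\et},G)^{\Delta}$ in $\cok{\rm Res}_{G}^{(r)}$, namely
\[
\krn\beta_{\le r}=H^{\le r}(\spp_{\et},G)^{\Delta}\big/\img{\rm Res}_{G}^{(r)}
\]
(using $\img{\rm Res}_{G}^{(r)}\subseteq H^{\le r}(\spp_{\et},G)^{\Delta}$), while $\img\beta_{\le r}=(1-\tau)H^{\le r}(\spp_{\et},G)$, whence $\cok\beta_{\le r}={}_{N}\be H^{\le r}(\spp_{\et},G)/(1-\tau)H^{\le r}(\spp_{\et},G)$, since $(1-\tau)H^{\le r}(\spp_{\et},G)\subseteq\krn{\rm Cores}_{G}^{(r)}={}_{N}\be H^{\le r}(\spp_{\et},G)$.

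Finally I would splice. The tautological four-term exact sequence of $e_{\le r}$ is
\[
0\to\krn e_{\le r}\to\cok\e{\rm Cores}_{\e G}^{(r-1)}\overset{e_{\le r}}{\lra}\krn{\rm Res}_{G}^{(r+1)}\to\cok e_{\le r}\to 0.
\]
Prepending the evident exact sequence $H^{\le r}(S_{\et},G)\to H^{\le r}(\spp_{\et},G)^{\Delta}\to\krn e_{\le r}\to 0$, whose first arrow is ${\rm Res}_{G}^{(r)}$ factored through the $\Delta$-invariants and whose second arrow is the quotient onto $\krn e_{\le r}=\krn\beta_{\le r}=H^{\le r}(\spp_{\et},G)^{\Delta}/\img{\rm Res}_{G}^{(r)}$, and substituting $\cok e_{\le r}=\cok\beta_{\le r}={}_{N}\be H^{\le r}(\spp_{\et},G)/(1-\tau)H^{\le r}(\spp_{\et},G)$ at the other end, yields exactly the five-arrow sequence asserted in the theorem. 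Exactness at each node is then immediate: at $H^{\le r}(\spp_{\et},G)^{\Delta}$ the kernel of the second arrow is $\img{\rm Res}_{G}^{(r)}$, and at $\cok\e{\rm Cores}_{\e G}^{(r-1)}$ the kernel of $e_{\le r}$ is $\krn\beta_{\le r}$ by construction.

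The step I expect to require the most care is guaranteeing that the collapse of $G(n)$ genuinely fuses $c_{\le r}$ and $d_{\le r}$ into the single map $e_{\le r}$ with kernel $\krn\beta_{\le r}$ and cokernel $\cok\beta_{\le r}$: because $c_{\le r}$ and $d_{\le r}$ a priori live in different cohomological degrees of $G(n)$, one must check that \emph{both} $H^{\le r}(S_{\fl},G(n))$ and $H^{\le r+1}(S_{\fl},G(n))$ vanish simultaneously, which is precisely what $G(n)=0$ provides, and that no residual connecting map survives. A secondary subtlety is the passage between the flat cohomology in which $c_{\le r},d_{\le r}$ are defined and the \'etale cohomology of the statement; this is harmless only because $G(n)=0$ trivializes the relevant groups in both topologies, and I would record that vanishing explicitly before invoking Proposition \ref{ak}.
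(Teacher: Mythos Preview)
Your proposal is correct and follows essentially the same approach as the paper: both use $G(n)=0$ in the quadratic Galois case to collapse Proposition~\ref{ak} into the four-term sequence $0\to\krn\beta_{\le r}\to\cok{\rm Cores}_{\e G}^{(r-1)}\to\krn{\rm Res}_{G}^{(r+1)}\to\cok\beta_{\le r}\to 0$ (your $e_{\le r}$ is the paper's $\gamma_{\lle r}$), then identify $\beta_{\le r}$ with the map induced by $1-\tau$ to compute its kernel and cokernel explicitly, and splice. One minor point: the fact that $\beta_{\le r}$ is induced by $1-\tau$ is read off not from Theorem~\ref{th1} itself but from the explicit formula \eqref{bmor} for $b$ (which in the Galois case with $n=2$ gives $x\mapsto x^{2}/N(x)=x/\tau(x)$) together with the diagram \eqref{t1} defining $\psi$.
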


\begin{corollary} {\rm ($=$Theorem \ref{nice})} Let $K\be/\be F$ be a quadratic Galois extension of global fields and let $\si$ be a nonempty finite set of primes of $F$ containing the archimedean primes and the non-archimedean primes that ramify in $K$. Then there exists a canonical exact sequence of abelian groups
\[
0\to W_{\!F,\e \si}/\lbe N_{\be K\be/\lbe F}\e\okss\to \cok\e j_{ K\be/\lbe F,\e\si}\to C_{K/F,\e\si}\to \cok \lambda\to 0,
\]
where $W_{\!F,\e \si}\subseteq\ofss$ is given by \eqref{kg3}, $j_{ K\be/\lbe F,\e\si}$ is the $\si$-capitulation map \eqref{cap}, $C_{K\be/\lbe F,\e\si}$ is the relative $\si$-ideal class group of $K\!/\be F$ and
\[
\lambda\colon\ofss/\lbe N_{\be K\be/\lbe F}\e\okss\to\krn\!\be\left[\,\displaystyle{\bigoplus_{v\in\si}}\e F_{\! v}^{\e *}\be/\be N_{\be K_{\lbe w}\lbe/\lbe F_{\lbe v}}K_{\lbe w}^{*}\overset{\!\Sigma\e{\rm ord}_{v}}{\lra}\Z/\lbe 2\le\Z \right]
\]
is the canonical localization map. In particular, the number of ideal classes in $C_{\lbe K,\e\si}$ that do not lie in the image of $j_{ K\be/\lbe F,\e\si}$ is at least $[\e W_{\!F,\e \si}\colon\! N_{\be K\be/\lbe F}\e\okss\e]$, and the relative $\si$-ideal class number $|C_{K/F,\e\si}|$ is divisible by $2^{\e \rho\le -\le e}$, where the integers  $\rho$ and $e\leq \rho$ are given by \eqref{sr2} and \eqref{isol}, respectively.
\end{corollary}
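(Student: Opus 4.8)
The plan is to obtain the corollary by specializing Theorem~\ref{main0} to the pair $(\e f,\G_m)$ with $S=\spec\ofs$, $\spp=\spec\oks$ and $r=1$, and then rewriting the resulting cohomological sequence in classical terms. The first step is to verify that $(\e f,\G_m)$ is an admissible quadratic Galois pair in the sense of Definition~\ref{qadm}, which is immediate since $K\be/\be F$ is quadratic Galois and $\ofs\to\oks$ is finite and faithfully flat of rank $2$. I would then record the dictionary $H^{0}(\spp_{\et},\G_m)=\okss$, $H^{1}(\spp_{\et},\G_m)=\pic\oks=C_{K,\si}$ and the corresponding identities over $F$, under which ${\rm Cores}_{\G_m}^{(0)}=N_{K/F}\colon\okss\to\ofss$, ${\rm Res}_{\G_m}^{(1)}=j_{K/F,\si}$ is the capitulation map, and ${\rm Cores}_{\G_m}^{(1)}\colon C_{K,\si}\to C_{F,\si}$ is the ideal norm $N=1+\tau$. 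In particular $\cok{\rm Cores}_{\G_m}^{(0)}=\ofss/N_{K/F}\okss$ and ${}_{N}H^{1}(\spp_{\et},\G_m)=\krn{\rm Cores}_{\G_m}^{(1)}=C_{K/F,\si}$.

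The main input, and the step I expect to be the chief obstacle, is the identification of $\krn{\rm Res}_{\G_m}^{(2)}=\br(\oks/\ofs)$ with the target of $\lambda$. Here I would invoke the Albert--Brauer--Hasse--Noether theory: since $\si$ contains the archimedean and ramified primes, the fundamental exact sequence of global class field theory identifies $\br(\ofs)$ with $\krn[\e\bigoplus_{v\in\si}\br(F_{\be v})\to\Q/\Z\e]$, and a class is split by $K$ exactly when each local component lies in $\br(K_{\lbe w}/F_{\be v})\cong F_{\be v}^{\e *}/N_{K_{\lbe w}/F_{\be v}}K_{\lbe w}^{*}$. This yields a canonical isomorphism
\[
\krn{\rm Res}_{\G_m}^{(2)}\;\cong\;\krn\!\be\Big[\,\bigoplus_{v\in\si}F_{\be v}^{\e *}/N_{K_{\lbe w}/F_{\be v}}K_{\lbe w}^{*}\overset{\Sigma\,{\rm ord}_{v}}{\lra}\Z/2\Z\,\Big].
\]
The delicate point is to check that, under this isomorphism, the connecting homomorphism $\ofss/N_{K/F}\okss\to\krn{\rm Res}_{\G_m}^{(2)}$ produced by Theorem~\ref{main0} is precisely the localization map $\lambda$; that its image lands in $\krn(\Sigma\,{\rm ord}_{v})$ is exactly the statement of the global reciprocity law, so this should reduce to a compatibility of the connecting map of Theorem~\ref{main0} with the Brauer--Hasse--Noether sequence.

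With these identifications in hand, the specialized sequence of Theorem~\ref{main0} becomes
\[
C_{F,\si}\to C_{K,\si}^{\e\Delta}\to\ofss/N_{K/F}\okss\overset{\lambda}{\lra}\krn{\rm Res}_{\G_m}^{(2)}\to C_{K/F,\si}/(1-\tau)C_{K,\si}\to 0,
\]
so that $\krn\lambda$ equals the image of $C_{K,\si}^{\e\Delta}$ and $\cok\lambda\cong C_{K/F,\si}/(1-\tau)C_{K,\si}$. To reach the asserted four-term sequence I would splice the tautological short exact sequence $0\to C_{K,\si}^{\e\Delta}/j(C_{F,\si})\to\cok j_{K/F,\si}\to(1-\tau)C_{K,\si}\to 0$ (using $j(C_{F,\si})\subseteq C_{K,\si}^{\e\Delta}$ and $C_{K,\si}/C_{K,\si}^{\e\Delta}\cong(1-\tau)C_{K,\si}$) with the Tate sequence $0\to(1-\tau)C_{K,\si}\to C_{K/F,\si}\to C_{K/F,\si}/(1-\tau)C_{K,\si}\to 0$ along $(1-\tau)C_{K,\si}$. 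This produces a four-term exact sequence whose outer terms I then rewrite as $W_{F,\si}/N_{K/F}\okss$ (since $W_{F,\si}$ is defined in \eqref{kg3} as the preimage of $\krn\lambda$, giving $W_{F,\si}/N_{K/F}\okss=\krn\lambda\cong C_{K,\si}^{\e\Delta}/j(C_{F,\si})$) and $\cok\lambda$, yielding precisely the exact sequence of the corollary.

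Finally, the two numerical assertions follow formally. The injection $W_{F,\si}/N_{K/F}\okss\hookrightarrow\cok j_{K/F,\si}$ shows that $|\cok j_{K/F,\si}|\geq[\e W_{F,\si}\colon N_{K/F}\okss\e]$, which is the asserted lower bound on the number of classes of $C_{\lbe K,\si}$ lying outside $\img j_{K/F,\si}$. For the divisibility, the surjection $C_{K/F,\si}\twoheadrightarrow\cok\lambda$ shows that $|\cok\lambda|$ divides $|C_{K/F,\si}|$; a direct count, using that the target of $\lambda$ is elementary abelian of $\mathbb F_{2}$-rank $\rho$ (as in \eqref{sr2}) and that $\img\lambda$ has $\mathbb F_{2}$-dimension $e\leq\rho$ (as in \eqref{isol}), identifies $\cok\lambda$ as a group of order $2^{\e\rho-e}$, whence $2^{\e\rho-e}\mid|C_{K/F,\si}|$.
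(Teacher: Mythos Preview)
Your argument is correct, and the Brauer-group identification of $\gamma_{1}$ with $\lambda$ is exactly what the paper does (via the commutative diagram \eqref{key2}). The one place where you take a different route is in assembling the four-term sequence. The paper does not go through Theorem~\ref{main0} here; it invokes Theorem~\ref{boo} for $r=1$, which \emph{already} gives
\[
0\to \krn\gamma_{1}\to \cok\e{\rm Res}_{\G_m}^{(1)}\overset{\beta_{1}}{\lra}\krn\e{\rm Cores}_{\G_m}^{(1)}\to \cok\gamma_{1}\to 0,
\]
so that the corollary follows immediately upon substituting $\krn\gamma_{1}=W_{F,\si}/N_{K/F}\okss$, $\cok\e{\rm Res}_{\G_m}^{(1)}=\cok j_{K/F,\si}$, $\krn\e{\rm Cores}_{\G_m}^{(1)}=C_{K/F,\si}$ and $\cok\gamma_{1}\simeq\cok\lambda$. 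Your splicing of the two tautological short exact sequences along $(1-\tau)C_{K,\si}$ is a valid alternative: it simply reproves, by hand and in this special case, the passage from the five-term sequence of Theorem~\ref{main} to the four-term sequence of Theorem~\ref{boo} (note that the map you build, induced by $1-\tau$, is exactly $\beta_{1}$). Both routes rest on the same arithmetic input; the paper's is shorter only because Theorem~\ref{boo} has already packaged the splicing. One small point: when checking admissibility, you should note that $f$ is \emph{\'etale} (not merely finite flat of rank $2$), which is where the hypothesis that $\si$ contains the ramified primes is used.
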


\begin{corollary} {\rm ($=$Theorem \ref{abv})} Let $F$ be a global field of characteristic different from $2$, $K\be/\be F$ a quadratic Galois extension with Galois group $\Delta=\{1,\tau\}$ and $\si$ a nonempty finite set of primes of $F$ containing the archimedean primes, the non-archimedean primes that ramify in $K$ and all primes $v$ such that ${\rm char}\e k(v)=2$. Set $S=\spec \mathcal O_{\lbe F,\e \Sigma}$ and $\spp =\mathcal O_{\lbe K,\e \Sigma_{K}}$ and let $\mathcal A$ be an abelian scheme over $S$ with generic fiber $A$. Then there exist canonical exact sequences of torsion abelian groups	
\begin{enumerate}
\item[(i)]
\[
\begin{array}{rcl}	
0&\to& H^{-1}\lbe(\Delta, A(K))\to\sha^{1}_{\Sigma}(F,A\e)\to \sha^{1}_{\Sigma}(K,A\e)^{\Delta}\to A(F\e)/N_{\be K\lbe/\lbe F}A(K)\\\\
&\to&\krn[H^{\le 2}(S,\mathcal A)\to H^{\le 2}(\spp_{\et},\mathcal A\e)]\to {}_{N}\sha^{1}_{\Sigma}(K,A\e)/(1\!-\!\tau)\sha^{1}_{\Sigma}(K,A\e)\to 0
\end{array}
\]
and
\item[(ii)]
\[
\begin{array}{rcl}	
H^{\le 2}(S,\mathcal A)&\to& H^{2}(\spp_{\et},\mathcal A\e)^{\Delta}\to
\sha^{1}_{\Sigma}(F,A\e)/N_{\be K\lbe/\lbe F}\le\sha^{1}_{\Sigma}(K,A\e)\\\\
&\to&\prod\pi_{0}(\lbe A(F_{\be v}))^{\e\prime}\to {}_{N}\be H^{\le 2}(\spp_{\et},\mathcal A\e)/(1\!-\!\tau)H^{\le 2}\lbe(\spp_{\et},\mathcal A\e)\to 0
\end{array}
\]
where the product extends over all real primes $v$ of $F$ that ramify in $K$ and, for each such prime $v$, $\pi_{0}(\lbe A(F_{\be v}))^{\e\prime}$ is the group \eqref{pio}.
\end{enumerate}
\end{corollary}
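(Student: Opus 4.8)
The plan is to derive both exact sequences directly from Theorem~\ref{main0} applied to the pair $(f,\mathcal A)$, where $f\colon\spp\to S$ is the degree-two covering induced by $K/F$, taking $r=1$ for part~(i) and $r=2$ for part~(ii). The first task is to check that $(f,\mathcal A)$ is an admissible quadratic Galois pair in the sense of Definition~\ref{qadm}: since $\Sigma$ contains the archimedean primes, the primes ramifying in $K$, and the primes of residue characteristic $2$, the morphism $f$ is finite, faithfully flat and in fact étale Galois of constant rank $2$ with group $\Delta=\{1,\tau\}$, while an abelian scheme $\mathcal A/S$ is smooth, commutative, projective (hence quasi-projective) with connected (abelian-variety) fibers. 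Once admissibility is verified, Theorem~\ref{main0} yields, for each such $r$, a canonical exact sequence
\[
H^{r}(S_{\et},\mathcal A)\to H^{r}(\spp_{\et},\mathcal A)^{\Delta}\to\cok\,{\rm Cores}_{\mathcal A}^{(r-1)}\to\krn\,{\rm Res}_{\mathcal A}^{(r+1)}\to{}_{N}H^{r}(\spp_{\et},\mathcal A)/(1-\tau)H^{r}(\spp_{\et},\mathcal A)\to 0,
\]
and it remains only to translate each term into the stated arithmetic invariants.

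The identifications needed are as follows. By the standard interpretation of the cohomology of an abelian scheme over a ring of $\Sigma$-integers one has $H^{1}(S_{\et},\mathcal A)\cong\sha^{1}_{\Sigma}(F,A)$ and $H^{1}(\spp_{\et},\mathcal A)\cong\sha^{1}_{\Sigma}(K,A)$, compatibly with the $\Delta$-action and with restriction and corestriction; moreover $\mathcal A(\spp)=A(K)$ and $\mathcal A(S)=A(F)$ by properness, and ${\rm Cores}^{(0)}_{\mathcal A}$ is the norm $N_{K/F}$. Substituting these into the displayed sequence for $r=1$ turns its tail into $A(F)/N_{K/F}A(K)\to\krn[H^{2}(S,\mathcal A)\to H^{2}(\spp_{\et},\mathcal A)]\to{}_{N}\sha^{1}_{\Sigma}(K,A)/(1-\tau)\sha^{1}_{\Sigma}(K,A)\to 0$, which is the right-hand half of~(i). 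To obtain the left-hand half I would prepend the inflation--restriction identification of $\krn\,{\rm Res}_{\mathcal A}^{(1)}$ from \cite{ga18b}: for the Galois cover $f$ this kernel is $H^{1}(\Delta,\mathcal A(\spp))=H^{1}(\Delta,A(K))$, and since $\Delta$ is cyclic of order two, Tate periodicity gives $H^{1}(\Delta,A(K))\cong H^{-1}(\Delta,A(K))$. Splicing the resulting short exact segment $0\to H^{-1}(\Delta,A(K))\to\sha^{1}_{\Sigma}(F,A)\to\sha^{1}_{\Sigma}(K,A)^{\Delta}$ onto the front along the common restriction map produces~(i). Part~(ii) is simply the displayed sequence for $r=2$ after identifying $\cok\,{\rm Cores}_{\mathcal A}^{(1)}=\sha^{1}_{\Sigma}(F,A)/N_{K/F}\sha^{1}_{\Sigma}(K,A)$; no prepending is needed, since the $r=2$ sequence already begins with $H^{2}(S,\mathcal A)\to H^{2}(\spp_{\et},\mathcal A)^{\Delta}$.

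The genuinely delicate point, and the step I expect to be the main obstacle, is the identification
\[
\krn[\,H^{3}(S,\mathcal A)\to H^{3}(\spp_{\et},\mathcal A)\,]\;\cong\;\prod_{v}\pi_{0}(A(F_v))',
\]
the product running over the real primes $v$ of $F$ that become complex in $K$. I would attack this using the description of $\krn\,{\rm Res}_{\mathcal A}^{(3)}$ supplied by \cite{ga18b}, which reduces it to local contributions at the archimedean primes, combined with the fact that $K/F$ is unramified away from the archimedean and residue-characteristic-$2$ primes already collected in $\Sigma$. At a real place $v$ that stays real in $K$ the local restriction is an isomorphism and contributes nothing to the kernel; at a real place $v$ becoming complex, $K_w=\mathbb C$ is algebraically closed, so the target local term vanishes and the entire local contribution lies in the kernel. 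Evaluating that contribution amounts to computing the $2$-periodic Tate cohomology of ${\rm Gal}(\mathbb C/\mathbb R)$ acting on $A(\mathbb C)$, whose relevant graded piece is $\pi_{0}(A(F_v))$ up to the normalization recorded in \eqref{pio}; this yields $\pi_{0}(A(F_v))'$ and completes~(ii). Finally, one checks that every group occurring is torsion: the groups $\sha^{1}_{\Sigma}$ are torsion, the Tate-cohomology groups $H^{\pm1}(\Delta,A(K))$ and $A(F)/N_{K/F}A(K)$ are finite because $A(K)$ is finitely generated, and the higher étale cohomology of $\mathcal A$ over the arithmetic curve $S$ is torsion; hence all the sequences are exact sequences of torsion abelian groups, as asserted.
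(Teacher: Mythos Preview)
Your proposal is correct and follows essentially the same route as the paper: verify that $(f,\mathcal A)$ is an admissible quadratic Galois pair, apply Theorem~\ref{main0} for $r=1$ and $r=2$, and then translate the terms via $H^{1}(S_{\et},\mathcal A)\simeq\sha^{1}_{\Sigma}(F,A)$, the Tate-periodicity identification $\krn\,{\rm Res}^{(1)}_{\mathcal A}\simeq H^{-1}(\Delta,A(K))$, and a local computation of $\krn\,{\rm Res}^{(3)}_{\mathcal A}$. The only point to sharpen is the last step: the reduction of $\krn\,{\rm Res}^{(3)}_{\mathcal A}$ to real places and the identification $H^{3}(F_{v},A)\simeq\pi_{0}(A(F_{v}))'$ come from \cite[II, Proposition~5.1(a) and I, Remark~3.7]{adt} (this is exactly Lemma~\ref{prea} in the paper), not from \cite{ga18b}; your sketch of that computation via Tate cohomology of $\mathrm{Gal}(\mathbb C/\mathbb R)$ is morally right but should be replaced by those precise references.
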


\smallskip

If $S$ is a scheme, let $\brp S=H^{2}(S_{\et},\bg_{m})$ be the (possibly non-torsion) full cohomological Brauer group of $S$.

\begin{corollary}\label{kf} Let $k$ be a quadratically closed field (e.g., an algebraically closed field or a separably closed field of characteristic different from $2$) and let $Y\to X$ be a quadratic Galois covering of proper and geometrically integral $k$-schemes with Galois group $\Delta=\{1,\tau\}$. Then there exists a canonical exact sequence of abelian groups
\[
\begin{array}{rcl}
0&\to& {}_{N}\pic Y/(1-\tau)\pic Y\to \brp X\to (\brp\e Y\le)^{\Delta}\to \pic X/N_{\le Y\be/\lbe X}\pic Y\\
&\to&\krn[H^{\lle 3}\lbe(X_{\et},\bg_{m})\to H^{\le 3}\lbe(Y_{\et}, \bg_{m})]\to {}_{N}\brp\e Y/(1-\tau)\brp\e Y\to 0.
\end{array}
\]
\end{corollary}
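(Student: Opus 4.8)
The plan is to specialize Theorem~\ref{main0} to the pair $(\e f,G\e)=(\le Y\to X,\,\bg_m)$ and to splice together its output at the two values $r=2$ and $r=1$. Before anything else I would verify that $(\le Y\to X,\bg_m)$ is an admissible quadratic Galois pair in the sense of Definition~\ref{qadm}: the group $\bg_m$ is smooth, commutative and quasi-projective with connected fibers, and $Y\to X$ is quadratic Galois with group $\Delta=\{1,\tau\}$, so only the remaining structural hypotheses of that definition need checking. Since $\bg_m$ is smooth, its \'etale and fppf cohomologies coincide, and we use the standard identifications $H^{1}(X_{\et},\bg_m)=\pic X$, $H^{2}(X_{\et},\bg_m)=\brp X$, and likewise over $Y$.

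Applying Theorem~\ref{main0} with $(S,\spp,G)=(X,Y,\bg_m)$ and $r=2$ produces the exact sequence
\[
\brp X\to(\brp\e Y)^{\Delta}\to\cok\e{\rm Cores}_{\bg_m}^{(1)}\to\krn{\rm Res}_{\bg_m}^{(3)}\to {}_{N}\brp\e Y/(1-\tau)\brp\e Y\to 0.
\]
Here ${\rm Cores}_{\bg_m}^{(1)}\colon\pic Y\to\pic X$ is the classical norm map $N_{\le Y\be/\lbe X}$, so that $\cok\e{\rm Cores}_{\bg_m}^{(1)}=\pic X/N_{\le Y\be/\lbe X}\pic Y$, and $\krn{\rm Res}_{\bg_m}^{(3)}=\krn[\le H^{3}(X_{\et},\bg_m)\to H^{3}(Y_{\et},\bg_m)\le]$. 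This is exactly the tail of the asserted sequence, from $\brp X$ onward.

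To obtain the initial segment I would run Theorem~\ref{main0} again with $r=1$, where the decisive point is that $\cok\e{\rm Cores}_{\bg_m}^{(0)}=0$. Indeed, since $X$ and $Y$ are proper and geometrically integral over $k$ one has $H^{0}(X_{\et},\bg_m)=H^{0}(Y_{\et},\bg_m)=k^{\e*}$, and on $H^{0}$ the corestriction sends a constant $c\in k^{\e*}$ to $N_{\le Y\be/\lbe X}(c)=c^{\e 2}$, the determinant of multiplication by $c$ on the rank-two $\cO_{\be X}$-module $f_{*}\cO_{\be Y}$; hence its cokernel is $k^{\e*}/(k^{\e*})^{2}$, which vanishes because $k$ is quadratically closed. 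With this vanishing the $r=1$ sequence degenerates, forcing the isomorphism $\krn{\rm Res}_{\bg_m}^{(2)}\isoto {}_{N}\pic Y/(1-\tau)\pic Y$.

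It remains to splice. Because a restriction class is automatically $\Delta$-invariant, $\krn[\,\brp X\to(\brp\e Y)^{\Delta}\,]=\krn{\rm Res}_{\bg_m}^{(2)}$, which the previous step identifies with ${}_{N}\pic Y/(1-\tau)\pic Y$; this yields the exact piece $0\to {}_{N}\pic Y/(1-\tau)\pic Y\to\brp X\to(\brp\e Y)^{\Delta}$, and concatenating it with the $r=2$ sequence gives the full six-term sequence. I expect the only real work to lie in the bookkeeping: confirming admissibility of $(\le Y\to X,\bg_m)$ against Definition~\ref{qadm} and matching the abstract restriction and corestriction maps of Theorem~\ref{main0} with the norm maps on $\pic$ and on global units. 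Once those identifications are secured, the quadratic closedness of $k$ supplies the rest by annihilating $\cok\e{\rm Cores}_{\bg_m}^{(0)}$ and thereby opening the sequence with a zero.
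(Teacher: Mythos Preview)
Your proposal is correct and follows essentially the same approach as the paper. The paper's proof simply cites Corollary~\ref{cor0}(i) and Remark~\ref{brr}(b), but unwinding those references yields precisely your argument: Corollary~\ref{cor0}(i) is itself obtained by splicing the $r=1$ and $r=2$ cases of Theorem~\ref{main} under the hypothesis that $\gamma_{1}=0$, and Remark~\ref{brr}(b) verifies this hypothesis for proper geometrically integral schemes over a quadratically closed field by showing, exactly as you do, that the norm map on global units is surjective (both $\varGamma(X,\mathcal O_{X})^{*}$ and $\varGamma(Y,\mathcal O_{Y})^{*}$ equal $k^{*}$, $\Delta$ acts trivially, so the norm is squaring and $k^{*}=(k^{*})^{2}$).
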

\begin{proof} This follows from Corollary \ref{cor0}(i) and Remark \ref{brr}(b).
\end{proof}

\medskip

The reader will notice that spectral sequences are (conspicuously) absent from this paper, as they were from its predecessor \cite{ga18b}. Although it is certainly possible to use spectral sequences to discuss some of the problems considered in this paper, we avoid them entirely in order to make our presentation as elementary as possible. More precisely, we wish to avoid the type of technical complications that can sometimes arise when spectral sequences are used, such as those appearing in \cite[pp.~227-230]{ps} and \cite[Appendix 1.A, pp.~397-408]{cts87}. See Remark \ref{hs}(a) for more comments on this topic.

\medskip

The paper is organized as follows. After the preliminary Section \ref{pre}, we discuss in Section \ref{pn1} the projective and norm one fppf sheaves on $S$ associated to $(\e S^{\lle\prime}\!\lbe/\be S,G\e)$, which are denoted by $P_{\lbe S^{\lle\prime}\be/\lbe S}(G\e)$  and $R_{\e S^{\lle\prime}\!/\lbe S}^{\e(1)}\lbe(G\e)$, respectively. In Section \ref{compm}\e, inspired by the works of Shyr \cite{sh77}, Ono \cite{o85} and Katayama \cite{k86b}, we introduce comparison morphisms $R_{\e S^{\lle\prime}\!/\lbe S}^{\e(1)}\lbe(G\e)\rightleftarrows P_{\lbe S^{\lle\prime}\be/\lbe S}(G\e)$ and show that, if the pair $(\e S^{\lle\prime}\!\lbe/\lbe S,G\e)$ is admissible in the sense of Definition \ref{adm}, then there exist canonical exact sequences of fppf sheaves of abelian groups on $S$
\[
0\to G_{\lbe n}\to R_{\e S^{\lle\prime}\!/\lbe S}^{\e(1)}\lbe(G\e)\to P_{\lbe S^{\lle\prime}\be/\lbe S}(G\e)\to 0
\]
and
\[
0\to G(n)\to P_{\lbe S^{\lle\prime}\be/\lbe S}(G\e)\to  R_{\e S^{\lle\prime}\!/\lbe S}^{\e(1)}\lbe(G\e)\to 0,
\]
where $G(n)=R_{\e S^{\lle\prime}\!/\lbe S}^{\e(1)}\lbe(\lbe G_{\lbe n}\be)/G_{\lbe n}$. The fppf cohomology sequences induced by the above sequences are then suitably combined in (the rather technical) Section \ref{rc} to construct the maps $\alpha_{\lle r}$ and $\beta_{\lle r}$ mentioned above. In this section we also describe the kernels and cokernels of $\alpha_{\lle r}$ and $\beta_{\lle r}$ in terms of the fppf cohomology of the $n$-torsion sheaves $G_{\lbe n}$ and $G(n)$. In Section \ref{quad} we specialize the general discussion of the preceding sections to the case of quadratic Galois coverings and obtain the main theorem of the paper (viz Theorem \ref{main0} above). In section \ref{arith} we discuss applications of the results obtained in section \ref{quad} to ideal class groups of global fields, N\'eron-Raynaud class groups of invertible tori over such fields and Tate-Shafarevich groups of abelian schemes over the corresponding arithmetic rings. In section \ref{bra} we discuss applications of the results of section \ref{quad} to the cohomological Brauer group of a locally noetherian scheme.

\section*{Acknowledgement}

I thank Dino Lorenzini for sending me a copy of \cite{f92} and the referee for suggesting Examples \ref{ref}.

\section{Preliminaries}\label{pre}

If $A$ is an object of a category, $1_{\be A}$ will denote the identity morphism of $A$.

\begin{lemma}\label{ker-cok}  Let  $A\overset{\!f}{\to}B\overset{\!g}{\to}C$ be  morphisms in an abelian category $\s A$. Then there exists a canonical exact sequence in $\s A$
\[
0\to \krn f\to \krn\lbe(\e g\be\circ\be f\e)\to \krn g\to\cok f\to \cok\lbe(\e g\be\circ\be f\e) \to \cok g\to 0.
\]
\end{lemma}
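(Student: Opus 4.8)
The plan is to build the six morphisms of the sequence canonically out of $f$, $g$ and the universal properties of kernels and cokernels, and then to read off exactness from the snake lemma together with one elementary observation at the left-hand end. The inclusion $\krn f\to\krn(g\circ f)$ is induced by $1_{\be A}$, since $f$ (hence $g\circ f$) vanishes on $\krn f$; the map $\krn(g\circ f)\to\krn g$ is the corestriction of $f$, which carries $\krn(g\circ f)$ into $\krn g$ because $g$ kills $f$ of any such element; the map $\krn g\to\cok f$ is the composite of the inclusion $\krn g\hookrightarrow B$ with the canonical projection $B\to\cok f$; the map $\cok f\to\cok(g\circ f)$ is induced by $g$, which sends $\img f$ into $\img(g\circ f)$; and $\cok(g\circ f)\to\cok g$ is induced by $1_{\be C}$, since $\img(g\circ f)\subseteq\img g$. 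Each of these is canonical by the relevant universal property, so the sequence itself is canonical.

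For exactness I would apply the snake lemma to the commutative diagram
\[
\begin{CD}
A @>f>> B @>>> \cok f\\
@V{g\circ f}VV @VgVV @V{0}VV\\
C @>{1_{\be C}}>> C @>>> 0
\end{CD}
\]
whose top row $A\xrightarrow{f}B\to\cok f\to 0$ is right exact and whose bottom row $0\to C\xrightarrow{1_{\be C}}C\to 0$ is short exact. The induced six-term sequence is $\krn(g\circ f)\to\krn g\to\cok f\to\cok(g\circ f)\to\cok g\to 0$, and a direct inspection shows that its arrows are precisely the maps $\krn(g\circ f)\to\krn g$, $\krn g\to\cok f$, $\cok(g\circ f)\to\cok g$ described above together with the connecting homomorphism $\cok f\to\cok(g\circ f)$. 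I would then splice the segment $0\to\krn f\to\krn(g\circ f)\to\krn g$ onto the left: the first arrow is a monomorphism because both objects are subobjects of $A$, and exactness at $\krn(g\circ f)$ holds because the kernel of the corestriction $\krn(g\circ f)\xrightarrow{f}\krn g$ equals $\krn f\cap\krn(g\circ f)=\krn f$. Since the snake lemma already supplies exactness at $\krn g$, the two pieces overlap on the arrow $\krn(g\circ f)\to\krn g$ and combine into the asserted sequence.

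The only step requiring genuine care is the identification of the snake-lemma connecting homomorphism with the canonical map $\cok f\to\cok(g\circ f)$ induced by $g$; here the degeneracy of the diagram, namely the identity $1_{\be C}$ in the lower-left corner, collapses the usual zig-zag construction of the connecting map to a single application of $g$, which is what makes the identification transparent. Everything else is formal. If one prefers to bypass the snake lemma, the five exactness assertions can instead be verified directly by a routine element chase, legitimate in the general abelian category $\s A$ after reducing to a module category via the Freyd--Mitchell embedding theorem; that chase is exactly the computation implicit in the descriptions of the maps given above.
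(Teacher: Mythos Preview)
Your argument is correct. The diagram you set up is commutative, the hypotheses of the snake lemma are satisfied, and your identification of the connecting homomorphism with the map induced by $g$ is valid precisely because the bottom-left arrow is $1_{C}$. The splicing step on the left is handled correctly as well: the map $\krn f\to\krn(g\circ f)$ is a monomorphism of subobjects of $A$, and its image is exactly the kernel of the restricted map $f\colon\krn(g\circ f)\to\krn g$.

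The paper itself does not give a proof: it simply cites Beyl \cite{bey}. So there is no approach to compare against beyond noting that your explicit derivation via the snake lemma is a standard and entirely acceptable route. Your description of the map $\krn g\to\cok f$ as the composite $\krn g\hookrightarrow B\twoheadrightarrow\cok f$ agrees with what the paper records in Remark~\ref{k1}.
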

\begin{proof} See, for example, \cite[1.2]{bey}.
\end{proof}

\begin{remark}\label{k1}
The map $\krn g\to\cok f$ in the sequence of the lemma is the composition $\krn g\hookrightarrow B\twoheadrightarrow \cok f$. The remaining maps are the natural ones.
\end{remark}

If $A$ is an object of an abelian category $\s A$ and $n\geq 1$ is an integer, $A_{\e n}$ (respectively, $A/ n\le$) will denote the kernel (respectively, cokernel) of the multiplication by $n$ morphism on $A$.

\smallskip

{\it All schemes appearing in this paper are tacitly assumed to be non-empty.}

\smallskip

If $S$ is a scheme and $\tau\, (=\et$ or $\fl$) denotes either the \'etale or fppf topology on $S$, $S_{\tau}$ will denote the small $\tau$ site over $S$. Thus $S_{\tau}$ is the category of $S$-schemes that are \'etale (respectively, flat) and locally of finite presentation over $S$ equipped with the \'etale (respectively, fppf) topology. We will write $S_{\tau}^{\e\sim}$ for the (abelian) category of sheaves of abelian groups on $S_{\tau}$. If $G$ is a commutative $S$-group scheme, the presheaf represented by $G$ is an object of $S_{\tau}^{\e\sim}$. If $f\colon\spp\to S$ is an fppf covering of $S$, then the map $G(S\e)\hookrightarrow G(\spp)$ induced by $f$ is an injection that will be regarded as an inclusion. The object $G_{n}$ of the abelian category $\sfs$ is represented by the $S$-group scheme $G\times_{n_{\le G},\e G,\e\varepsilon} S$, where $n_{\le G}$ is the $n$-th power morphism on $G$ and $\varepsilon\colon S\to G$ is the unit section of $G$. If $G$ is separated over $S$, then $G_{n}\hookrightarrow G$ is a closed immersion. If, in addition, $G$ is quasi-projective over $S$, thebut we believe that
n $G_{n}$ is  quasi-projective over $S$ as well \cite[Proposition 5.3.4(i)]{ega2}.

\smallskip

If $\s F$ is an abelian sheaf on $S_{\tau}$ and $i\geq 0$ is an integer, $H^{\le i}\lbe(S_{\tau},\s F\le)$ will denote the $i$-th $\tau$ cohomology group of $\s F$. If $\spp\to S$ is a morphism of schemes, we will write $H^{\le r}\be(\spp_{\tau},G\e)$ for $H^{\le r}\be(\spp_{\tau},G_{\be S^{\lle\prime}}\be)$, where $G_{S^{\lle\prime}}=G\times_{S}\spp$. If $G=\bg_{m,\e S}$, the groups $H^{\le r}\be(S_{\et},G\e)$ will be denoted by $H^{\le r}\be(S_{\et},\bg_{m})$. Note that $H^{\le 0}\lbe(S_{\et},\bg_{m})=\varGamma( S,\mathcal O_{\lbe S}\lbe)^{*}$ is the group of global units on $S$. The groups $H^{\le 1}\be(S_{\et},\bg_{m})$ and $\pic S$ will be identified via \cite[Theorem 4.9, p.~124]{mi1}. Further, we will write $\brp S$ for the cohomological Brauer group of $S$, i.e., $\brp S=H^{2}(S_{\et},\bg_{m})$, and  $\br S$ for the Brauer group of equivalence classes of Azumaya algebras on $S$. By \cite[I, \S2]{dix}, there exists a canonical injection of abelian groups $\br S\hookrightarrow \brp S$. The relative (cohomological) Brauer group associated to a morphism of schemes $\spp\to S$ is the group
\begin{equation}\label{relb}
\brp\lbe(\spp\be/\lbe S\e)=\krn\!\be\left[\brp\le S\to \brp\le\spp\e\right],
\end{equation}
where the indicated map is the restriction map ${\rm Res}_{\e\bg_{m,\le S}}^{(2)}$. The subgroup $\br\lbe(\spp\be/\lbe S\e)$ of $\br S$ is defined similarly.

\smallskip

We recall that, if $f\colon \spp\to S$ is an \'etale morphism and $\Delta$ is a finite group of order $n\geq 2$ which acts on $\spp/S$ from the right, then $f$ is called a {\it Galois covering with Galois group $\Delta$} if the canonical map
\[
\coprod_{\e \delta\e\in\e \Delta}\be \spp\to \spp\!\times_{S}\!\spp, (x,\delta\e)\mapsto (x,x\delta\e),
\]
is an isomorphism of $S$-schemes. See  \cite[V, Proposition 2.6 and Definition  2.8]{sga1}.

\smallskip

Let $f\colon S^{\e\prime}\to S$ be a finite and locally free morphism of schemes and let $X^{\prime}$ be an $\spp$-scheme. The {\it Weil restriction of $X^{\prime}$ along $f$} is the contravariant functor
$(\mathrm{Sch}/S)\to(\mathrm{Sets}), T\mapsto\hom_{
S^{\le\prime}}(T\times_{S}S^{\le\prime},X^{\le\prime}\le)$. This functor is {\it representable} if there exist an $S$-scheme $R_{S^{\le\prime}\be/S}(X^{\prime}\e)$ and a morphism of $S^{\e\prime}$-schemes $\theta_{X^{\prime},\e S^{\le\prime}\be/S}\colon R_{S^{\le\prime}\be/S}(X^{\prime}\e)_{S^{\le\prime}}\to X^{\prime}$ such that the map
\begin{equation}\label{wr}
\hom_{\le S}\e(T,R_{S^{\le\prime}\be/S}(X^{\le\prime}\e))\to\hom_{
S^{\le\prime}}(\e T\!\times_{S}\!S^{\e\prime},X^{\le\prime}\e), \quad g\mapsto \theta_{ X^{\prime}\!,\, S^{\le\prime}\be/S}\circ g_{\le S^{\le\prime}},
\end{equation}
is a bijection (functorially in $T\e$). See \cite[\S7.6]{blr} for basic information on the Weil restriction functor. We will write
\begin{equation}\label{imor}
j_{ X,\e S^{\lle\prime}\be/ S}\colon X\to R_{\e S^{\lle\prime}\be/ S}(X_{\be S^{\lle\prime}}\be)
\end{equation}
for the canonical adjunction $S$-morphism, i.e., the $S$-morphism that corresponds to the identity morphism of $X_{S^{\lle\prime}}$ under the bijection \eqref{wr}.

\section{The projective and norm one groups}\label{pn1}

Let $f\colon \spp\to S$ be a finite and faithfully flat morphism of locally noetherian schemes of constant rank $n\geq 2$ and let $G$ be a smooth, commutative and quasi-projective $S$-group scheme with connected fibers. The map $j_{ G,\e S^{\lle\prime}\be/ S}\colon G\to R_{\e S^{\lle\prime}\be/ S}(G_{\lbe S^{\lle\prime}}\be)$ \eqref{imor} is a closed immersion of smooth, commutative and quasi-projective $S$-group schemes with connected fibers. See \cite[Lemma 3.1]{ga18b} and \cite[\S6.4, Theorem 1, p.~153; \S7.6, Theorem 4 and Proposition 5, (b) and (h), pp.~194-195]{blr}. The quotient fppf sheaf of abelian groups on $S$
\begin{equation}\label{pg}
P_{\lbe S^{\lle\prime}\be/\lbe S}(G\e)=\cok[\e G\overset{j_{\le G,\lle S^{\lle\prime}\!\lbe/\lbe S}}{\lra}R_{\e S^{\lle\prime}\be/ S}(G_{\lbe S^{\lle\prime}}\be)\,]
\end{equation}
is called the {\it projective group associated to $(\e f,G\e)$} (after Voskresenskii \cite[p.~198, line 6]{v}). If ${\rm dim}\e S\leq 1$, \eqref{pg} is represented by a smooth, commutative and quasi-projective $S$-group scheme with connected fibers. See \cite[Theorem 4.C, p.~53]{an}, \cite[\S6.4, Theorem 1, p.~153]{blr}, \cite[${\rm VI_{B}}$, Proposition 9.2, (x) and (xii)]{sga3} and \cite[Lemma 2.54]{gfr}. 

\smallskip

Next let 
\begin{equation}\label{tmor}
N_{G,\e S^{\lle\prime}\be/ S}\colon R_{\e S^{\lle\prime}\be/ S}(G_{\lbe S^{\lle\prime}}\be)\to G
\end{equation}
be the norm morphism  defined in \cite[XVII, 6.3.13.1 and 6.3.14(a)]{sga4}. By \cite[Proposition 3.2]{ga18b}, \eqref{tmor} is a smooth surjection. Further, by \cite[XVII, Proposition 6.3.15(iv)]{sga4}, the $n$-th power morphism on $G$ factors as
\begin{equation}\label{nis}
n_{\le G}\colon G\overset{j_{\e G,\le S^{\lle\prime}\be/ S}}{\hookrightarrow } R_{\e S^{\lle\prime}\be/ S}(G_{\lbe S^{\lle\prime}}\be)\overset{\!N_{G,\e S^{\lle\prime}\be/S}}{\lra} G.
\end{equation}
The {\it norm one group scheme associated to $(\e f,G\e)$} is the $S$-group scheme
\begin{equation}\label{n1}
R_{\e S^{\lle\prime}\!/\lbe S}^{\e(1)}\lbe(G\e)=\krn[R_{\e S^{\lle\prime}\be/ S}(G_{\lbe S^{\lle\prime}}\be)\overset{\!\be N_{\lbe G,\le S^{\lle\prime}\!\lbe/ S}}{\lra} G\,].
\end{equation}
By \cite[Proposition 3.10]{ga18b}, $R_{\e S^{\lle\prime}\!/\lbe S}^{\e(1)}\lbe(G\e)$ is smooth and commutative with connected fibers. Further, since $G$ is quasi-projective and therefore separated over $S$ \cite[comment after Definition 5.3.1]{ega2}, the unit section $\varepsilon\colon S\to G$ is a closed immersion \cite[${\rm VI_{B}}$, Proposition 5.1]{sga3}. Consequently, the canonical $S$-morphism $R_{\e S^{\lle\prime}\!/\lbe S}^{\e(1)}\lbe(G\e)\to R_{\e S^{\lle\prime}\!/\lbe S}\lbe(G\e)$ is a closed immersion whence, by 
\cite[Propositions 5.3.1, (i) and (ii), p.~279, and 6.3.4, (i) and (ii), p.~304]{ega1}, $R_{\e S^{\lle\prime}\!/\lbe S}^{\e(1)}(G\e)$ is separated and of finite type over $S$. Now \cite[\S6.4, Theorem 1, p.~153]{blr} shows that $R_{\e S^{\lle\prime}\!/\lbe S}^{\e(1)}\lbe(G\e)$ is quasi-projective over $S$.

\begin{remark} When $S=\spec A$, where $A$ is either a global field or a ring of integers in such a field, and $G=\bg_{m,S}$, the groups \eqref{pg} and \eqref{n1} have been discussed by Shyr \cite[\S5]{sh77}, \cite{sh79}, Ono \cite{o85, o87}, Katayama \cite{k86a,k86b,k87,k89,k91}, Sasaki \cite{sa88}, Morishita \cite{m91}, Voskresenskii \cite[Chapter 7, \S20]{v} and Liu and Lorenzini \cite[\S4]{ll01}. 
\end{remark}

\smallskip

There exist canonical exact sequences in $\sfs$
\begin{equation}\label{se1}
0\to G\overset{j_{G,S^{\prime}\!\lbe/\lbe S}}{\lra}R_{\e S^{\lle\prime}\be/ S}(G_{\lbe S^{\lle\prime}}\be)\overset{\! q}{\lra}P_{\lbe S^{\lle\prime}\be/\lbe S}(G\e)\to 0
\end{equation}
and
\begin{equation}\label{se2}
0\to R_{\e S^{\lle\prime}\!/\lbe S}^{\e(1)}\lbe(G\e)\overset{\!a}{\lra}R_{\e S^{\lle\prime}\be/ S}(G_{\lbe S^{\lle\prime}}\lbe)\overset{\!\be N_{\lbe\lbe G,\le S^{\lle\prime}\!\lbe/ S}}{\lra} G\to 0,
\end{equation}
where $q$ is the canonical projection and $a$ is the inclusion. Now, if $F=G, R_{\e S^{\lle\prime}\be/ S}(G_{\lbe S^{\lle\prime}}\be)$ or $R_{\e S^{\lle\prime}\!/\lbe S}^{\e(1)}\lbe(G\e)$, then the canonical map $H^{r}\lbe(S_{\et},F\e)\to H^{r}\lbe(S_{\fl},F\e)$ is an isomorphism of abelian groups \cite[Theorem 11.7(1), p.~180]{dix}, whence \eqref{se1} and \eqref{se2} induce exact sequences of abelian groups
\begin{equation}\label{lse1}
\begin{array}{rcl}
\dots&\overset{\partial^{\le(\lle r-1\lle)}}{\lra}& H^{\le r}\be(S_{\et},G\e)\overset{j^{(\lle r\lle)}}{\lra} H^{\le r}\be(S_{\et},R_{\e S^{\lle\prime}\be/ S}(G_{\lbe S^{\lle\prime}}\be)\e)\overset{q^{(\lle r\lle)}}{\lra}H^{\le r}\be(S_{\fl},P_{\lbe S^{\lle\prime}\be/ S}(G_{\lbe S^{\lle\prime}}\be)\e)\\
&\overset{\partial^{(\lle r\lle)}}{\lra}& H^{\le r+1}\be(S_{\et},G\e)\overset{j^{(\lle r+1\lle)}}{\lra}\dots
\end{array}
\end{equation}
and
\begin{equation}\label{lse2}
\begin{array}{rcl}
\dots&\overset{\delta^{\le(\lle r-1\lle)}}{\lra}& H^{\le r}\be(S_{\et},R_{\e S^{\lle\prime}\!/\lbe S}^{\e(1)}\lbe(G\e))\overset{a^{(\lle r\lle)}}{\lra} H^{\le r}\be(S_{\et},R_{\e S^{\lle\prime}\be/ S}(G_{\lbe S^{\lle\prime}}\be)\e)\overset{N^{(\lle r\lle)}}{\lra}H^{\le r}\be(S_{\et},G\e)\\
&\overset{\delta^{(\lle r\lle)}}{\lra}& H^{\le r+1}\be(S_{\et},R_{\e S^{\lle\prime}\!/\lbe S}^{\e(1)}\lbe(G\e))\overset{a^{(\lle r+1\lle)}}{\lra}\dots,
\end{array}
\end{equation}
where $j^{(\lle r\lle)}=H^{\le r}\be(S_{\et},j_{G,S^{\prime}\!\lbe/\lbe S}), a^{(\lle r\lle)}=H^{\le r}\be(S_{\et},a)$, $N^{(r)}=H^{\le r}\be(S_{\et},N_{G,S^{\lle\prime}\be/ S})$ and $q^{(\lle r\lle)}$ is the composition
\[
H^{\le r}\be(S_{\et},R_{\e S^{\lle\prime}\be/ S}(G_{\lbe S^{\lle\prime}}\be)\e)\isoto H^{\le r}\be(S_{\fl},R_{\e S^{\lle\prime}\be/ S}(G_{\lbe S^{\lle\prime}}\be)\e)\overset{\!H^{r}\be(S_{\fl},q)}{\lra}H^{\le r}\be(S_{\fl},P_{\lbe S^{\lle\prime}\be/ S}(G_{\lbe S^{\lle\prime}}\be)\e).
\]

Now let $e_{(r)}\colon H^{\e r}\lbe(S_{\et},R_{\e S^{\lle\prime}\be/ S}(G_{\lbe S^{\lle\prime}}\be)\e)\isoto H^{\e r}\lbe(\spp_{\et},G\e)$ be the isomorphism in \cite[Theorem 6.4.2(ii), p.~128]{t} and consider the compositions
\begin{equation}\label{res}
{\rm Res}_{\e G}^{(r)}\colon H^{\le r}\be(S_{\et},G\e)\overset{\! j^{(r)}}{\lra} H^{\e r}\lbe(S_{\et},R_{\e S^{\lle\prime}\be/ S}(G_{\lbe S^{\lle\prime}}\be)\e)\overset{e_{(r)}}{\underset{\sim}{\lra}} H^{\e r}\lbe(\spp_{\et},G\e)
\end{equation}
and
\begin{equation}\label{cores}
{\rm Cores}_{\e G}^{(r)}\colon H^{\e r}\be(\spp_{\et},G\e)\overset{e_{(r)}^{-1}}{\underset{\sim}{\lra}}H^{\e r}\lbe(S_{\et},R_{\e S^{\lle\prime}\be/ S}(G_{\lbe S^{\lle\prime}}\be)\e)\overset{\!N^{(\e r\le)}}{\lra}H^{\le r}\lbe(S_{\et},G\e).
\end{equation}
Occasionally, we will write
\begin{eqnarray}
\img\e {\rm Res}_{\e G}^{(r)}&=&{\rm Res}_{\le\spp\!/\lbe S}H^{\le r}\be(S_{\et},G\e)\label{eas1}\\
\krn\e{\rm Cores}_{\e G}^{(r)}&=&{}_{N}H^{\e r}\be(\spp_{\et},G\e)\label{eas2}\\
\img\e {\rm Cores}_{\e G}^{(r)}&=&N_{\spp\!\lbe/\lbe S}H^{\e r}\be(\spp_{\et},G\e)\label{eas3}.
\end{eqnarray}
Note that 
\begin{equation}\label{res0}
\krn\e j^{(r)}=\krn\e {\rm Res}_{\e G}^{(r)}.
\end{equation}
Further, $e_{(r)}$ induces isomorphisms of abelian groups
\begin{equation}\label{mois}
\krn\e N^{(r)}\isoto\krn\e {\rm Cores}_{\e G}^{(r)}
\end{equation}
and
\begin{equation}\label{cores0}
\cok\e N^{(\e r\le)}=\cok\e {\rm Cores}_{\e G}^{(r)}.
\end{equation}

\begin{remarks}\label{ntor}\indent
\begin{enumerate}
\item[(a)] It follows from \eqref{nis} that the composition ${\rm Cores}_{\e G}^{(r)}\circ {\rm Res}_{\e G}^{(r)}$ is the multiplication by $n$ morphism on $H^{\le r}\be(S_{\et},G\e)$. Consequently, $\cok\e {\rm Cores}_{\e G}^{(r)}$ and
\[
\krn\e {\rm Res}_{\e G}^{(r)}=\krn[H^{\le r}\be(S_{\et},G\e)_{n}\to H^{\le r}\be(\spp_{\et},G\e)]
\]
are $n$-torsion abelian groups.
\item[(b)] If $n$ is invertible on $S$ and the \'etale $\ell$-cohomological dimension of $S$, ${\rm cd}_{\e\ell}(S_{\et})$, exists for every prime $\ell$ that divides $n$, then 
$\krn\e {\rm Res}_{\e G}^{(r)}=0$ for every $r>\max_{\,\ell\e|\lle n}\be{\rm cd}_{\e\ell}(S_{\et})$. Indeed, by (a), $\krn\e{\rm Res}_{\e G}^{(r)}$  is the kernel of the map $H^{\lle r}\be(S_{\et},G\e)_{n}\to H^{\lle r}\lbe(\spp_{\et},G\e)_{n}$ induced by ${\rm Res}_{\e G}^{(r)}$. Now the proof of \cite[Proposition 3.3]{ga18a} shows that $n\colon G\to G$ is \'etale, surjective and locally of finite presentation, whence
the sequence $0\to G_{\lbe n}\to G\overset{\!n}{\to}G\to 0$ is exact in $\ses$ \cite[IV, Proposition 4.4.3]{sga3}. The cohomology sequence associated to the preceding exact sequence shows that $H^{\le r}\be(S_{\et},G\e)_{n}$ is a quotient of $H^{\le r}\be(S_{\et},G_{\lbe n})$, which vanishes if $r>\max_{\,\ell\le| n}\be{\rm cd}_{\e\ell}(S_{\et})$.
\end{enumerate}
\end{remarks}

\smallskip

Next, since $e_{(r)}^{-1}(\e \img\e {\rm Res}_{\e G}^{(r)})=\img\e j^{(r)}=\krn\e q^{(r)}$ by the exactness of \eqref{lse1}, the composition
\[
H^{\e r}\lbe(\spp_{\et},G\e)\overset{e_{(r)}^{-1}}{\underset{\sim}{\lra}}H^{\e r}\lbe(S_{\et},R_{\e S^{\lle\prime}\be/ S}(G_{\lbe S^{\lle\prime}}\be)\e)\overset{q^{(r)}}{\lra}  H^{\e r}\lbe(S_{\fl},P_{\lbe S^{\lle\prime}\be/\lbe S}(G\e)\le)
\]
induces an injection
\begin{equation}\label{qer}
\overline{q}^{\e(r)}\colon \cok\e {\rm Res}_{\e G}^{(r)}\hookrightarrow  H^{\e r}\be(S_{\fl},P_{\lbe S^{\lle\prime}\be/\lbe S}(G\e)\le)
\end{equation}
such that
\begin{equation}\label{qbar0}
\img\e \overline{q}^{\le(r)}=\img\e q^{\le(r)}.
\end{equation}
On the other hand, since $\img \partial^{(r)}=\krn j^{(r+1)}=\krn {\rm Res}_{\e G}^{(r+1)}$ by the exactness of \eqref{lse1} and \eqref{res0}, the map $\partial^{\le(\lle r\lle)}$ \eqref{lse1} induces a surjection
\begin{equation}\label{dbar}
\overline{\partial}^{\e(r)}\colon H^{\e r}\be(S_{\fl},P_{\lbe S^{\lle\prime}\be/\lbe S}(G\e)\le)\twoheadrightarrow \krn {\rm Res}_{\e G}^{(r+1)}
\end{equation}
such that 
\begin{equation}\label{par0}
\krn\e \overline{\partial}^{\e(r)}=\krn\e \partial^{\e(r)}.
\end{equation}
Thus, since $\krn\e \partial^{\e(r)}=\img\e q^{\le(r)}$ (again by the exactness of \eqref{lse1}), \eqref{qbar0} and \eqref{par0} show that the following is an exact sequence of abelian groups for every $r\geq 0$\e:
\begin{equation}\label{fund1}
0\to \cok\e {\rm Res}_{\e G}^{(r)}\overset{\overline{q}^{\le(r)}}{\lra}  H^{\le r}\be(S_{\fl},P_{\lbe S^{\lle\prime}\be/\lbe S}(G\e)\le)\overset{\overline{\partial}^{\le(r)}}{\lra} \krn {\rm Res}_{\e G}^{(r+1)}\to 0.
\end{equation}
We will write 
\begin{equation}\label{qrp}
q_{\lle r}^{\e\prime}\colon \krn \overline{\partial}^{\le(r)}\isoto \cok\e {\rm Res}_{\e G}^{(r)}
\end{equation}
for the inverse of $\overline{q}^{\le(r)}\colon \cok\e {\rm Res}_{\e G}^{(r)}\isoto \krn \overline{\partial}^{\le(r)}$. Thus
\begin{equation}\label{evi}
\overline{q}^{\le(r)}\circ q_{\lle r}^{\e\prime}=1_{\krn \overline{\partial}^{\le(r)}}\quad\text{and}\quad q_{\lle r}^{\e\prime}\circ \overline{q}^{\le(r)}=1_{\cok {\rm Res}_{\e G}^{(r)}}.
\end{equation}

Next, assume that $r\geq 1$. Since $\img\e a^{(r)}=\krn\e N^{(r)}$ by the exactness of \eqref{lse2}, \eqref{mois} shows that the composition
\[
H^{\le r}\be(S_{\et},R_{\e S^{\lle\prime}\!/\lbe S}^{\e(1)}\lbe(G\e))\overset{a^{(r)}}{\lra}H^{\e r}\lbe(S_{\et},R_{\e S^{\lle\prime}\be/ S}(G_{\lbe S^{\lle\prime}}\be)\e)\underset{\!\sim}{\overset{\!e_{(r)}}{\lra}} H^{\e r}\lbe(\spp_{\et},G\e)
\]
induces a surjection
\[
\overline{a}^{\le(r)}\colon H^{\le r}\be(S_{\et},R_{\e S^{\lle\prime}\!/\lbe S}^{\e(1)}\lbe(G\e))\twoheadrightarrow \krn\e {\rm Cores}_{\e G}^{(r)}
\]
such that
\begin{equation}\label{kera}
\krn\e \overline{a}^{\le(r)}=\krn a^{(r)}.
\end{equation}
On the other hand, since $\img \e N^{(r-1)}=\krn\e\delta^{(r-1)}$ by the exactness of \eqref{lse1}, \eqref{cores0} shows that the map $\delta^{(r-1)}$ induces an injection
\[
\overline{\delta}^{\e(r-1)}\colon \cok\e {\rm Cores}_{\e G}^{(r-1)}\hookrightarrow H^{\le r}\be(S_{\et},R_{\e S^{\lle\prime}\!/\lbe S}^{\e(1)}\lbe(G\e))
\]
such that
\begin{equation}\label{opar}
\img\e\overline{\delta}^{(r-1)}=\img\e \delta^{(r-1)}.
\end{equation}
Thus, since $\img\e \delta^{(r-1)}=\krn\e a^{(r)}$ (again by the exactness of \eqref{lse2}), \eqref{kera} and \eqref{opar} show that the following is an exact sequence of abelian groups for every $r\geq 1$\e:
\begin{equation}\label{fund2}
0\to\cok\e {\rm Cores}_{\e G}^{(r-1)}\overset{\overline{\delta}^{(r-1)}}{\lra}  H^{\le r}\be(S_{\et},R_{\e S^{\lle\prime}\!/\lbe S}^{\e(1)}\lbe(G\e))\overset{\overline{a}^{\le(r)}}{\lra} \krn\e {\rm Cores}_{\e G}^{(r)}\to 0.
\end{equation}
The map $\overline{a}^{\le(r)}$ induces an isomorphism
\[
\widetilde{a}^{\le(r)}\colon \cok\e \overline{\delta}^{(r-1)}\isoto \krn\e {\rm Cores}_{\e G}^{(r)}
\]
and we will write
\begin{equation}\label{arp}
a_{\lle r}^{\le\prime}\colon \krn\e {\rm Cores}_{\e G}^{(r)}\isoto \cok\e \overline{\delta}^{(r-1)}
\end{equation}
for its inverse. Thus
\begin{equation}\label{evi2}
\widetilde{a}^{\le(r)}\circ a_{\lle r}^{\le\prime}=1_{\krn {\rm Cores}_{\e G}^{(r)}}\quad\text{and}\quad a_{\lle r}^{\le\prime}\circ \widetilde{a}^{\le(r)} =1_{ \cok \overline{\delta}^{(r-1)}}.
\end{equation}

\section{The comparison morphisms}\label{compm}

We keep the hypotheses of the previous section. Thus $f\colon \spp\to S$ is a finite and faithfully flat morphism of locally noetherian schemes of constant rank $n\geq 2$ and $G$ is a smooth, commutative and quasi-projective $S$-group scheme with connected fibers.

\medskip

Let
\begin{equation}\label{phi}
\varphi\colon R_{\e S^{\lle\prime}\!/\lbe S}^{\e(1)}\lbe(G\e)\to P_{\lbe S^{\lle\prime}\be/\lbe S}(G\e) 
\end{equation}
be defined by the commutativity of the diagram
\begin{equation}\label{t2}
\xymatrix{R_{\e S^{\lle\prime}\!/\lbe S}^{\e(1)}\lbe(G\e)\ar[dr]_(.45){a}\ar[rr]^(.5){\varphi}&& P_{\lbe S^{\lle\prime}\be/\lbe S}(G\e)\\
& R_{\le S^{\prime}\be/S}(G_{\lbe S^{\prime}}\be),\ar[ur]_(.5){q}&
}
\end{equation}
where $a$ and $q$ are the maps in \eqref{se1} and \eqref{se2}, respectively, i.e.,
\begin{equation}\label{obv}
\varphi=q\circ a.
\end{equation}

\smallskip

Next, to simplify the notation, set $j=j_{ G,\e S^{\lle\prime}\be/ S}$ and $N=N_{G,\e S^{\lle\prime}\be/ S}$. If $T\to S$ is any object of $S_{\fl}$ and $x\in R_{\e S^{\lle\prime}\be/ S}(G_{\lbe S^{\lle\prime}}\lbe)(\lle T\lle)$, then
\[
x^{n}\lbe(\e j(\lle T\lle)(N\lbe(\lle T\lle)(x)))^{-1}\in \krn\e N\lbe(\lle T\lle)=\img\e a(\lle T\lle),
\]
by the factorization \eqref{nis} and the exactness of \eqref{se2}. Thus we obtain a morphism in $\sfs$
\begin{equation}\label{bmor}
b\colon R_{\e S^{\lle\prime}\be/ S}(G_{\lbe S^{\lle\prime}}\lbe)\to R_{\e S^{\lle\prime}\!/\lbe S}^{\e(1)}\lbe(G\e),\, x\mapsto x^{n}(\e j(N\lbe(x)))^{-1},
\end{equation}
such that
\begin{eqnarray}
b\circ j&=&0\label{bj}\\
b\circ a&=&n_{\lbe R^{\le(1)}}\label{pta}\\
q\circ a\circ b&=&n_{\lbe P}\be\circ\lbe q, \label{forx}
\end{eqnarray}
where $n_{\lbe R^{\le(1)}}$ and $n_{\lbe P}$ denote the $n$-th power morphisms on $R_{\e S^{\lle\prime}\!/\lbe S}^{\e(1)}\lbe(G\e)$ and $P_{\e S^{\lle\prime}\!/\lbe S}\lbe(G\e)$, respectively. We will write
\begin{equation}\label{bmor0}
b_{\lle 0}=b(\lbe S\le)\colon G(\spp\le)\to {}_{N}G(\spp\le),\, x\mapsto x^{n}(\e N_{\lbe S^{\lle\prime}\be/ S}(x))^{-1},
\end{equation}
where ${}_{N}G(\spp\le)=\krn\e{\rm Cores}_{G}^{(0)}$ \eqref{eas2}.

\begin{remark} When $S$ is the spectrum of a global field and $G$ is a particular type of  $S$-torus, the map $b$ \eqref{bmor} was considered by Shyr \cite[\S5, (15)]{sh77}, Ono \cite{o85} and Katayama \cite{k86b}.
\end{remark}

By \eqref{bj} and the  exactness of \eqref{se2}, there exists a morphism in $\sfs$
\begin{equation}\label{psi}
\psi\colon P_{\lbe S^{\lle\prime}\be/\lbe S}(G\e)\to R_{\e S^{\lle\prime}\!/\lbe S}^{\e(1)}\lbe(G\e)
\end{equation}
such that the following diagram commutes
\begin{equation}\label{t1}
\xymatrix{R_{\le S^{\prime}\be/S}(G_{\lbe S^{\prime}})\ar[dr]_(.45){q}\ar[rr]^(.5){b}&& R_{\e S^{\lle\prime}\!/\lbe S}^{\e(1)}\lbe(G\e).\\
& P_{\lbe S^{\lle\prime}\be/\lbe S}(G\e)\ar[ur]_(.5){\psi}&
}
\end{equation}
By \eqref{obv}, \eqref{pta} and \eqref{t1}, we have 
\begin{equation}\label{d2}
\psi\circ\varphi=n_{\lbe R^{\le(1)}}.
\end{equation}
Further, since $q$ is an epimorphism in $S_{\fl}^{\le\sim}$ and 
$\varphi\circ\psi\circ q= q\circ a\circ b=n_{P}\circ q$ by \eqref{forx}, we have
\begin{equation}\label{d1}
\hskip -.5cm\varphi\circ\psi=n_{\lbe P}.
\end{equation}

The maps \eqref{phi} and \eqref{psi} are called the {\it comparison morphisms} associated to the pair $(\e f,G\e)$.

\smallskip

\begin{lemma}\label{fi} There exist canonical isomorphisms in $S_{\fl}^{\le\sim}$
\[
\krn\varphi\simeq G_{\lbe n}\qquad{\text{and}} \qquad\cok\varphi\simeq G/n,
\]
where $\varphi$ is the map \eqref{phi}.
\end{lemma}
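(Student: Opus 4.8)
The plan is to avoid computing $\krn\varphi$ and $\cok\varphi$ directly and instead extract both at once from Lemma \ref{ker-cok}, applied to the pair of morphisms
\[
R_{\e S^{\lle\prime}\!/\lbe S}^{\e(1)}\lbe(G\e)\overset{a}{\lra}R_{\e S^{\lle\prime}\be/ S}(G_{\lbe S^{\lle\prime}}\be)\overset{q}{\lra}P_{\lbe S^{\lle\prime}\be/\lbe S}(G\e)
\]
in the abelian category $\sfs$, whose composite is $\varphi=q\circ a$ by \eqref{obv}. For brevity write $R=R_{\e S^{\lle\prime}\be/ S}(G_{\lbe S^{\lle\prime}}\be)$ and keep the abbreviations $j=j_{G,\e S^{\lle\prime}\be/ S}$ and $N=N_{G,\e S^{\lle\prime}\be/ S}$ already in force.

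First I would record the four inputs coming from the two defining exact sequences \eqref{se1} and \eqref{se2}. Since $a$ is the inclusion of a kernel it is a monomorphism, so $\krn a=0$; since $q$ is an epimorphism, $\cok q=0$. The exactness of \eqref{se1} identifies $\krn q$ with $G$ via $j$ (because $\krn q=\img j$ and $j$ is mono), while the exactness of \eqref{se2} identifies $\cok a=R/\img a=R/\krn N$ with $G$ via the surjection $N$. Substituting these into the six-term sequence of Lemma \ref{ker-cok} collapses it to
\[
0\to \krn\varphi\to G\overset{\gamma}{\lra} G\to \cok\varphi\to 0,
\]
so that $\krn\varphi\cong\krn\gamma$ and $\cok\varphi\cong\cok\gamma$.

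The key step is to identify the middle arrow $\gamma$. By Remark \ref{k1} the map $\krn q\to\cok a$ in Lemma \ref{ker-cok} is the composite $\krn q\hookrightarrow R\twoheadrightarrow\cok a$. Under the two identifications above, the inclusion $\krn q\hookrightarrow R$ becomes $j\colon G\to R$ and the projection $R\twoheadrightarrow\cok a$ becomes $N\colon R\to G$, whence $\gamma=N\circ j$. But this composite is exactly the $n$-th power morphism $n_{\le G}$ by the factorization \eqref{nis}. Thus the displayed sequence reads $0\to\krn\varphi\to G\overset{n_{\le G}}{\lra}G\to\cok\varphi\to 0$, yielding the canonical isomorphisms $\krn\varphi\cong\krn n_{\le G}=G_{\lbe n}$ and $\cok\varphi\cong\cok n_{\le G}=G/n$.

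The only point requiring care is that everything takes place in $\sfs$, so the isomorphism-theorem identifications $\krn q\cong G$ and $\cok a\cong G$ must be read categorically (as the image and coimage supplied by \eqref{se1} and \eqref{se2}) rather than pointwise; this is automatic once one works inside the abelian category of fppf sheaves. I expect the main obstacle to be the bookkeeping in identifying $\gamma$ with $n_{\le G}$, namely confirming that the chosen isomorphisms $\krn q\cong G$ and $\cok a\cong G$ are precisely those making $\krn q\hookrightarrow R\twoheadrightarrow\cok a$ equal to $N\circ j$. This is guaranteed by Remark \ref{k1} together with \eqref{nis}, so no genuine difficulty arises.
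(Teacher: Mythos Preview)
Your proof is correct and follows essentially the same approach as the paper's: both apply Lemma \ref{ker-cok} to the factorization $\varphi=q\circ a$, use that $a$ is mono and $q$ is epi to collapse the six-term sequence to a four-term one, and then identify the middle map $\krn q\to\cok a$ with $n_{G}$ via the isomorphisms induced by $j$ and $N$ together with \eqref{nis}. The paper spells out the identifications a bit more explicitly (naming the induced maps $j^{\prime\prime}$, $a^{\prime\prime}$, $N^{\prime\prime}$, $q^{\prime\prime}$ and drawing the relevant commutative diagrams), but the argument is the same.
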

\begin{proof} An application of Lemma \ref{ker-cok} to the pair of morphisms
\[
R_{\e S^{\lle\prime}\!/\lbe S}^{\e(1)}\lbe(G\e)\overset{\!a}{\hookrightarrow}
R_{\le S^{\prime}\be/S}(G_{\lbe S^{\prime}}\be)\overset{\!q}{\twoheadrightarrow}P_{\lbe S^{\lle\prime}\be/\lbe S}(G\e),
\]
whose composition equals $\varphi$ by the commutativity of \eqref{t2}, yields the top row of the following exact and commutative diagram in $S_{\fl}^{\le\sim}$
\[
\xymatrix{0\ar[r]&\krn\varphi\ar[r]^{a^{\prime}}&\krn q\ar[r]^(.45){\kappa}&\cok\e a\ar[d]^{N^{\le\prime}}_{\simeq}\ar[r]^{q^{\le\prime}}&\cok\varphi\ar[r]& 0\\
&&G\ar[u]^{j^{\le\prime}}_{\simeq}\ar[r]^{n}&G.&&
}
\]
The map $a^{\lle\prime}$ is induced by $a$, $\kappa$ is the composition $\krn q\hookrightarrow R_{\le S^{\prime}\be/S}(G_{\lbe S^{\prime}}\be)\twoheadrightarrow \cok\e a$, $q^{\le\prime}$ is induced by $q$, the  vertical isomorphisms are induced by $j_{ G,\e S^{\lle\prime}\be/ S}$ and $N_{ G,\e S^{\lle\prime}\be/ S}$ and the square commutes by \eqref{nis}. The maps $a^{\lle\prime}$ and $j^{\le\prime}$ induce isomorphisms $a^{\lle\prime\prime}\colon\krn\varphi\isoto \krn\e\kappa$ and $j^{\le\prime\prime}\colon G_{\lbe n}\isoto \krn\e\kappa$ such that the following diagram commutes
\begin{equation}\label{top}
\xymatrix{\ar@{^{(}->}[d]G_{\lbe n}\ar[r]^(.4){j^{\le\prime\prime}}_(.4){\sim}&
\krn\e\kappa\ar@{^{(}->}[d]\ar[r]_(.45){\sim}^{(a^{\lle\prime\prime})^{-1}}&\krn\varphi\,
\ar@{^{(}->}[r]&R_{\e S^{\lle\prime}\!/\lbe S}^{\e(1)}\lbe(G\e)_{n}\ar@{^{(}->}[d]\\
G\ar[r]^(.4){j^{\le\prime}}_(.4){\sim}&\krn q\,\ar@{^{(}->}[r]&R_{\le S^{\prime}\be/S}(G_{\lbe S^{\prime}}\be)&R_{\e S^{\lle\prime}\!/\lbe S}^{\e(1)}\lbe(G\e).\ar@{_{(}->}[l]_(.45){a}
}
\end{equation}
Further, the map $N^{\le\prime}$ induces an isomorphism $N^{\le\prime\prime}\colon\cok\e\kappa\isoto G/n$. The isomorphisms of the lemma are defined by the commutativity of the following triangles:
\[
\xymatrix{G_{\lbe n}\ar[dr]^(.5){\sim}\ar[rr]^(.45){j^{\le\prime\prime}}_(.45){\sim}&& \krn\e\kappa\\
&\krn\e\varphi\ar[ur]_(.55){a^{\le\prime\prime}}^(.45){\sim}
}\qquad\xymatrix{\cok\e\kappa\ar[dr]_(.4){N^{\le\prime\prime}}^(.5){\sim}\ar[rr]^{q^{\le\prime\prime}}_(.48){\sim}&&\cok\e\varphi\\
&G/n\ar[ur]^{\sim}},
\]
where the map $q^{\le\prime\prime}$ is induced by $q^{\le\prime}$.
\end{proof}

Let
\begin{equation}\label{gn}
G(n)=\cok[\e G_{\lbe n}\hookrightarrow R_{\e S^{\lle\prime}\!/\lbe S}^{\e(1)}\lbe(G\e)_{n}\e]
\end{equation}
be the cokernel (in $S_{\fl}^{\le\sim}$) of the composition of the top horizontal arrows in diagram \eqref{top}. Further, let
\begin{equation}\label{inc}
\iota\colon G_{\lbe n}\hookrightarrow R_{\e S^{\lle\prime}\!/\lbe S}^{\e(1)}\lbe(G\e)
\end{equation}
be the composition $G_{\lbe n} \hookrightarrow R_{\e S^{\lle\prime}\!/\lbe S}^{\e(1)}\lbe(G\le)_{\lbe n}\hookrightarrow R_{\e S^{\lle\prime}\!/\lbe S}^{\e(1)}\lbe(G\le)$. The sheaf \eqref{gn} was introduced in \cite[p.~15]{ga18b}

\begin{lemma}\label{pss} There exists a canonical exact sequence in $S_{\fl}^{\le\sim}$
\[
0\to G(n)\to\krn \psi\to G/n\to R_{\e S^{\lle\prime}\!/\lbe S}^{\e(1)}\lbe(G\e)/n\to\cok\psi\to 0,
\]
where $\psi$ is the comparison morphism \eqref{psi}.
\end{lemma}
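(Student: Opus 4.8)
The plan is to deduce the sequence from Lemma~\ref{ker-cok} applied to the composable pair
\[
R_{\e S^{\lle\prime}\!/\lbe S}^{\e(1)}\lbe(G\e)\overset{\varphi}{\lra} P_{\lbe S^{\lle\prime}\be/\lbe S}(G\e)\overset{\psi}{\lra} R_{\e S^{\lle\prime}\!/\lbe S}^{\e(1)}\lbe(G\e)
\]
in the abelian category $S_{\fl}^{\le\sim}$, whose composition is $n_{\lbe R^{\le(1)}}$ by \eqref{d2}. Writing $R^{(1)}=R_{\e S^{\lle\prime}\!/\lbe S}^{\e(1)}\lbe(G\e)$ to lighten the notation, Lemma~\ref{ker-cok} yields a canonical exact sequence
\[
0\to\krn\varphi\to\krn n_{\lbe R^{\le(1)}}\to\krn\psi\to\cok\varphi\to\cok n_{\lbe R^{\le(1)}}\to\cok\psi\to 0.
\]

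I would then identify four of the six terms. By definition of the subscript and quotient notation, $\krn n_{\lbe R^{\le(1)}}=R^{(1)}_{\lbe n}$ and $\cok n_{\lbe R^{\le(1)}}=R^{(1)}/n$, while Lemma~\ref{fi} supplies canonical isomorphisms $\krn\varphi\simeq G_{\lbe n}$ and $\cok\varphi\simeq G/n$. After these substitutions the sequence reads
\[
0\to G_{\lbe n}\to R^{(1)}_{\lbe n}\to\krn\psi\to G/n\to R^{(1)}/n\to\cok\psi\to 0.
\]

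The only point requiring care, and hence the step I expect to be the crux, is the identification of the leftmost arrow $G_{\lbe n}\to R^{(1)}_{\lbe n}$. By Remark~\ref{k1} the map $\krn\varphi\to\krn n_{\lbe R^{\le(1)}}$ furnished by Lemma~\ref{ker-cok} is the natural inclusion of subobjects of $R^{(1)}$, since any section killed by $\varphi$ is killed by $\psi\circ\varphi=n_{\lbe R^{\le(1)}}$. Transporting it along the isomorphism $\krn\varphi\simeq G_{\lbe n}$ of Lemma~\ref{fi} reproduces exactly the composite along the top row of diagram~\eqref{top}, namely $G_{\lbe n}\isoto\krn\kappa\isoto\krn\varphi\hookrightarrow R^{(1)}_{\lbe n}$, and the cokernel of this composite is $G(n)$ by the very definition \eqref{gn}. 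In particular the arrow is a monomorphism, so from the initial segment $0\to G_{\lbe n}\to R^{(1)}_{\lbe n}\to\krn\psi$ of the six-term sequence I obtain an induced monomorphism $G(n)\hookrightarrow\krn\psi$ whose image coincides with the image of $R^{(1)}_{\lbe n}\to\krn\psi$, i.e.\ with the kernel of $\krn\psi\to G/n$. Splicing this monomorphism onto the exact tail $\krn\psi\to G/n\to R^{(1)}/n\to\cok\psi\to 0$ then produces the asserted five-term exact sequence. Everything beyond this identification is formal, being a direct combination of Lemmas~\ref{ker-cok} and~\ref{fi}.
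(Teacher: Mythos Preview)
Your proposal is correct and follows exactly the same approach as the paper: apply Lemma~\ref{ker-cok} to the pair $(\varphi,\psi)$ with composite $n_{R^{(1)}}$, invoke Lemma~\ref{fi} for $\ker\varphi$ and $\cok\varphi$, and use the definition \eqref{gn} of $G(n)$. You spell out in more detail than the paper why the map $G_{n}\to R^{(1)}_{n}$ arising from Lemma~\ref{ker-cok} coincides with the one in \eqref{gn}, but this is the only elaboration; the argument is otherwise identical.
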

\begin{proof} We apply Lemma \ref{ker-cok} to the pair of morphisms
\[
R_{\e S^{\lle\prime}\!/\lbe S}^{\e(1)}\lbe(G\e)\overset{\!\varphi}{\to}P_{\lbe S^{\lle\prime}\be/\lbe S}(G\e)\overset{\!\psi}{\to}R_{\e S^{\lle\prime}\!/\lbe S}^{\e(1)}\lbe(G\e),
\]
whose composition is $n_{\lbe R^{\le(1)}}$ \eqref{d2}. We then use Lemma \eqref{fi} and the definition \eqref{gn} to obtain the sequence of the lemma.
\end{proof}

\smallskip

We now recall from \cite{ga18b} the following

\begin{definition}\label{adm} The pair $(\e f,G\e)$ is called {\it admissible} if \begin{enumerate}
\item[(i)] $f\colon \spp\to S$ is a finite and faithfully flat morphism of locally noetherian schemes of constant rank $n\geq 2$,
\item[(ii)] $G$ is a smooth, commutative and quasi-projective $S$-group scheme with connected fibers, and
\item[(iii)] for every point $s\in S$ such that ${\rm char}\, k(\lbe s\lbe)$ divides $n$,
\begin{enumerate}
\item[(iii.1)] $G_{k(\lbe s\lbe)}$ is a semiabelian $k(\lbe s\lbe)$-variety, and
\item[(iii.2)] $f_{\lbe s}\colon \spp\times_{S}\spec k(\lbe s\lbe)\to \spec k(\lbe s\lbe)$ is \'etale.
\end{enumerate}
\end{enumerate}
\end{definition}

\smallskip

Note that, if $n$ is invertible on $S$, then condition (iii) above is vacuous. Examples of admissible pairs can be obtained by adding condition (iii.2) above to the examples in \cite[Examples 3.2]{ga18a}. Admissible pairs were introduced in \cite{ga18b} so that the following statement holds.

\smallskip

\begin{proposition}\label{isg2} Assume that $(\e f, G\e)=(\spp\!/S, G\e)$ is admissible (see Definition {\rm \ref{adm}}) and let $n\geq 2$ be the rank of $f$.
If $H=G,\e R_{\e S^{\lle\prime}\be/ S}(G_{\lbe S^{\lle\prime}}\be)$ or $R_{\e S^{\lle\prime}\!/\lbe S}^{\e(1)}\lbe(G\e)$, then $n\colon H\to H$ is an epimorphism in $\sfs$.
\end{proposition}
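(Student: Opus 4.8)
The plan is to prove the stronger, purely geometric statement that, for each of the three choices of $H$, the $S$-morphism $n_{\le H}\colon H\to H$ is faithfully flat and locally of finite presentation; any such morphism is a covering for the fppf topology and is therefore an epimorphism in $\sfs$. By the results recalled in Section~\ref{pn1}, all three candidates for $H$ are smooth (in particular flat and locally of finite presentation) over $S$ with connected fibres, so $n_{\le H}$ is automatically locally of finite presentation and the whole problem reduces to the faithful flatness of $n_{\le H}$.

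I would verify faithful flatness fibrewise. Since the source and target of $n_{\le H}$ are both flat and locally of finite presentation over $S$, the fibral criterion of flatness shows that $n_{\le H}$ is flat precisely when each fibre map $n_{\le H_{s}}\colon H_{s}\to H_{s}$ ($s\in S$) is flat; and, $n_{\le H}$ being an $S$-morphism, it is surjective precisely when every $n_{\le H_{s}}$ is surjective. Thus everything comes down to showing that multiplication by $n$ on the smooth, connected, commutative $k(\lbe s\lbe)$-group $H_{s}$ is flat and surjective, for every $s\in S$.

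Two cases arise. If $\mathrm{char}\,k(\lbe s\lbe)\nmid n$, then $n$ is invertible in $k(\lbe s\lbe)$, the differential of $n_{\le H_{s}}$ at the identity (multiplication by $n$ on the Lie algebra) is an isomorphism, and by homogeneity $n_{\le H_{s}}$ is \'etale, hence flat; its image is then an open---therefore closed---subgroup of the connected group $H_{s}$, so $n_{\le H_{s}}$ is surjective. If $p=\mathrm{char}\,k(\lbe s\lbe)$ divides $n$, I claim that $H_{s}$ is a semiabelian $k(\lbe s\lbe)$-variety. For $H=G$ this is exactly condition (iii.1) of admissibility. For $H=R_{\e S^{\lle\prime}\be/ S}(G_{\lbe S^{\lle\prime}}\be)$ I would use condition (iii.2): it forces $\spp\times_{S}\spec k(\lbe s\lbe)$ to be finite \'etale over $k(\lbe s\lbe)$, i.e.\ a finite disjoint union of spectra of finite separable extensions, and since Weil restriction commutes with the base change $\spec k(\lbe s\lbe)\to S$, the fibre $H_{s}$ is a finite product of Weil restrictions of the semiabelian variety $G_{k(\lbe s\lbe)}$ along finite separable extensions; such Weil restrictions are again semiabelian. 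For $H=R_{\e S^{\lle\prime}\!/\lbe S}^{\e(1)}\lbe(G\e)$, the fibre $H_{s}$ is, by Section~\ref{pn1}, a smooth connected closed subgroup of the semiabelian variety $R_{\e S^{\lle\prime}\be/ S}(G_{\lbe S^{\lle\prime}}\be)_{s}$, and every such subgroup is semiabelian. Finally, on a semiabelian variety multiplication by $n$ is an isogeny: from the extension $0\to T\to H_{s}\to A\to 0$ (torus by abelian variety) and the snake lemma one gets surjectivity of $n_{\le H_{s}}$, using that $n$ is surjective on $T$ and on $A$, while the kernel $H_{s}[n]$ is finite; hence $n_{\le H_{s}}$ is finite and faithfully flat.

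The easy case is $\mathrm{char}\,k(\lbe s\lbe)\nmid n$; the real content, and the main obstacle, is the case $p\mid n$, which is precisely what the admissibility hypothesis is designed to handle. The delicate point there is not $H=G$ (where semiabelianness is assumed outright) but the Weil-restriction and norm-one fibres: controlling these rests essentially on condition (iii.2), that $f_{\lbe s}$ is \'etale, together with the stability of the class of semiabelian varieties under Weil restriction along finite separable extensions and under passage to smooth connected subgroups.
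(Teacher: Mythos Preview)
Your argument is correct and is essentially the content of the references the paper cites: \cite[Proposition 3.12]{ga18b} establishes exactly the faithful flatness of $n_{H}$ via the fibral reduction and semiabelian-fibre analysis you outline, and \cite[IV, Proposition 4.4.3]{sga3} supplies the passage from ``faithfully flat and locally of finite presentation'' to ``epimorphism in $\sfs$''. The paper gives no details beyond these two citations, so your proposal is a faithful reconstruction of the cited proof.
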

\begin{proof} See \cite[Proposition 3.12]{ga18b} and \cite[IV, Proposition 4.4.3]{sga3}.
\end{proof}

The next statement is immediate from Proposition \ref{isg2} and Lemmas \ref{fi} and \ref{pss}.

\begin{proposition} Assume that the pair $(\e f, G\e)$ is admissible (see Definition {\rm \ref{adm}}). Then there exist canonical exact sequences in $\sfs$
\begin{equation}\label{sap1}
0\to G_{\lbe n}\overset{\!\iota}{\to} R_{\e S^{\lle\prime}\!/\lbe S}^{\e(1)}\lbe(G\e)\overset{\!\varphi}{\to} P_{\lbe S^{\lle\prime}\be/\lbe S}(G\e)\to 0
\end{equation}
and
\begin{equation}\label{sap2}
0\to G(n)\overset{\!\be\ell}{\to} P_{\lbe S^{\lle\prime}\be/\lbe S}(G\e)\overset{\!\psi}{\to}  R_{\e S^{\lle\prime}\!/\lbe S}^{\e(1)}\lbe(G\e)\to 0,
\end{equation}
where $\iota$ is the map \eqref{inc}, $\varphi$ is defined by \eqref{t2}, $G(n)$ is the sheaf \eqref{gn}, $\ell$ is induced by $\varphi_{n}$ and $\psi$ is defined by \eqref{t1}. 
\end{proposition}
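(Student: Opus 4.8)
The plan is to obtain each short exact sequence by combining the relevant one of Lemmas \ref{fi} and \ref{pss} with Proposition \ref{isg2}, whose vanishing consequences collapse those longer exact sequences into the asserted four-term sequences. Exactness is then essentially automatic, and the only real work lies in identifying the left-hand inclusions $\iota$ and $\ell$ with their prescribed descriptions in \eqref{inc} and as the map induced by $\varphi_{n}$.

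For \eqref{sap1}, I would first record the canonical four-term exact sequence attached to the single morphism $\varphi$,
\[
0\to \krn\varphi\to R_{\e S^{\lle\prime}\!/\lbe S}^{\e(1)}\lbe(G\e)\overset{\!\varphi}{\to}P_{\lbe S^{\lle\prime}\be/\lbe S}(G\e)\to\cok\varphi\to 0,
\]
and substitute the isomorphisms $\krn\varphi\simeq G_{\lbe n}$ and $\cok\varphi\simeq G/n$ furnished by Lemma \ref{fi}. Since $(\e f,G\e)$ is admissible, Proposition \ref{isg2} gives that $n\colon G\to G$ is an epimorphism in $\sfs$, whence $\cok\varphi\simeq G/n=0$ and $\varphi$ is an epimorphism. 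This already yields a short exact sequence with left-hand term $G_{\lbe n}$; to finish, I would check that the inclusion $\krn\varphi\hookrightarrow R_{\e S^{\lle\prime}\!/\lbe S}^{\e(1)}\lbe(G\e)$, transported along the isomorphism $\krn\varphi\simeq G_{\lbe n}$ of Lemma \ref{fi}, coincides with $\iota$. This is precisely the commutativity recorded in the top row of diagram \eqref{top}, together with the definition \eqref{inc} of $\iota$ as the composite $G_{\lbe n}\hookrightarrow R_{\e S^{\lle\prime}\!/\lbe S}^{\e(1)}\lbe(G\e)_{n}\hookrightarrow R_{\e S^{\lle\prime}\!/\lbe S}^{\e(1)}\lbe(G\e)$.

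For \eqref{sap2}, I would begin from the five-term exact sequence of Lemma \ref{pss},
\[
0\to G(n)\to\krn \psi\to G/n\to R_{\e S^{\lle\prime}\!/\lbe S}^{\e(1)}\lbe(G\e)/n\to\cok\psi\to 0.
\]
Here Proposition \ref{isg2} applies to both $H=G$ and $H=R_{\e S^{\lle\prime}\!/\lbe S}^{\e(1)}\lbe(G\e)$, forcing $G/n=0$ and $R_{\e S^{\lle\prime}\!/\lbe S}^{\e(1)}\lbe(G\e)/n=0$. The sequence therefore degenerates into an isomorphism $G(n)\isoto\krn\psi$ together with the vanishing $\cok\psi=0$, i.e.\ $\psi$ is an epimorphism with kernel $G(n)$. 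Taking $\ell$ to be the composite of $G(n)\isoto\krn\psi$ with the inclusion $\krn\psi\hookrightarrow P_{\lbe S^{\lle\prime}\be/\lbe S}(G\e)$ produces the desired short exact sequence, and tracing this map back through the exact sequence of Lemma \ref{ker-cok} applied to the pair $(\varphi,\psi)$ identifies it with the map induced by $\varphi_{n}$: by Remark \ref{k1} the arrow $\krn(\psi\circ\varphi)=R_{\e S^{\lle\prime}\!/\lbe S}^{\e(1)}\lbe(G\e)_{n}\to\krn\psi$ is the natural one induced by $\varphi$, and it factors through the quotient $G(n)=R_{\e S^{\lle\prime}\!/\lbe S}^{\e(1)}\lbe(G\e)_{n}/G_{\lbe n}$.

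Since exactness is purely formal once the vanishing of $G/n$ and $R_{\e S^{\lle\prime}\!/\lbe S}^{\e(1)}\lbe(G\e)/n$ is in hand, the only step requiring genuine attention is the bookkeeping identifying $\iota$ and $\ell$ with their prescribed forms. I expect this to be the main (though modest) obstacle, as it requires chasing the isomorphisms of Lemmas \ref{fi} and \ref{pss} through diagram \eqref{top} rather than merely quoting the abstract kernel and cokernel.
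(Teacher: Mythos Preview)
Your proposal is correct and follows essentially the same approach as the paper, which states only that the proposition is ``immediate from Proposition \ref{isg2} and Lemmas \ref{fi} and \ref{pss}.'' You have simply filled in the details of this immediate deduction, including the bookkeeping for the identifications of $\iota$ and $\ell$, which the paper leaves implicit.
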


If we identify the groups $H^{\le r}\be(S_{\et},R_{\e S^{\lle\prime}\!/\lbe S}^{\e(1)}\lbe(G\e))$ and $H^{\le r}\be(S_{\fl},R_{\e S^{\lle\prime}\!/\lbe S}^{\e(1)}\lbe(G\e))$ for every $r\geq 0$ via \cite[Theorem 11.7(1), p.~180]{dix}, then the proposition shows that, if $(\e f, G\e)$ is admissible, there exist canonical exact sequences of abelian groups
\begin{equation}\label{phis}
\begin{array}{rcl}
\dots&\to& H^{\le r}\be(S_{\fl},G_{\lbe n})\overset{\iota^{(\lle r\lle)}}{\lra} H^{\le r}\be(S_{\et},R_{\e S^{\lle\prime}\!/\lbe S}^{\e(1)}\lbe(G\e))\overset{\varphi^{(\lle r\lle)}}{\lra}H^{\le r}\be(S_{\fl},P_{\lbe S^{\lle\prime}\be/S}(G\e))\\
&\to& H^{\le r+1}\be(S_{\fl},G_{\lbe n})\overset{\iota^{(\lle r+1\lle)}}{\lra} H^{\le r+1}\be(S_{\et},R_{\e S^{\lle\prime}\!/\lbe S}^{\e(1)}\lbe(G\e))\dots
\end{array}
\end{equation}
and
\begin{equation}\label{psis}
\begin{array}{rcl}
\dots&\to& H^{\le r}\be(S_{\fl},G(n))\overset{\ell^{\lle(\lle r\lle)}}{\lra} H^{\le r}\be(S_{\fl},P_{\lbe S^{\lle\prime}\be/S}(G\e))\overset{\psi^{(\lle r\lle)}}{\lra}H^{\le r}\be(S_{\et},R_{\e S^{\lle\prime}\!/\lbe S}^{\e(1)}\lbe(G\e))\\
&\to& H^{\le r+1}\be(S_{\fl},G(n))\overset{\ell^{\lle(\lle r+1\lle)}}{\lra}H^{\le r+1}\be(S_{\fl},P_{\lbe S^{\lle\prime}\be/S}(G\e))\to\dots.
\end{array}
\end{equation}
Clearly, \eqref{phis} and \eqref{psis} yield isomorphisms of abelian groups 
\begin{equation}\label{clear1}
\cok\e\varphi^{(r)}\isoto \krn\e\iota^{(\lle r+1\lle)} 
\end{equation}
and
\begin{equation}\label{clear2}
\cok\e\psi^{(r)}\isoto\krn\e\ell^{\e(\lle r+1\lle)}.
\end{equation}

\section{Restriction cokernels and corestriction kernels}\label{rc}

We continue to assume that $(\e f, G\e)$ is an admissible pair (see Definition \ref{adm}).

By \eqref{opar}, \eqref{obv} and the exactness of \eqref{lse1} and \eqref{lse2}, we have 
\[
\varphi^{(r)}\lbe(\e\img\e \overline{\delta}^{(r-1)})=(\e q^{(r)}\be\circ\be a^{(r)})\lbe(\krn\e a^{(r)})=0
\]
and
\[
\overline{\partial}^{\le(r)}\!\be\circ \varphi^{(r)}=\partial^{\le(r)}\!\be\circ q^{(r)}\circ a^{(r)}=0
\]
for every $r\geq 1$. Thus the following diagram, whose top and bottom rows are, respectively, the exact sequences \eqref{fund2} and \eqref{fund1} commutes:
\begin{equation}\label{bas1}
\xymatrix{0\ar[r]& \ar[d]^(.47){0}\cok\e {\rm Cores}_{\e G}^{(r-1)}\ar[r]^{\overline{\delta}^{(r-1)}}& H^{\le r}\be(S_{\et},R_{\e S^{\lle\prime}\!/\lbe S}^{\e(1)}\lbe(G\e))\ar[d]^(.47){\varphi^{(r)}}\ar[r]^(.57){\overline{a}^{\le(r)}}&\ar[d]^{0}\krn\e {\rm Cores}_{\e G}^{(r)}\ar[r]&0\\
0\ar[r]& \cok\e {\rm Res}_{\e G}^{(r)}\ar[r]^(.45){\overline{q}^{\le(r)}}& H^{\le r}\be(S_{\fl},P_{\lbe S^{\lle\prime}\!/\lbe S}(G\e)\le)\ar[r]^(.57){\overline{\partial}^{\le(r)}}&\krn {\rm Res}_{\e G}^{(r+1)}\ar[r]&0.	
}
\end{equation}
Thus we obtain a canonical morphism of abelian groups
\begin{equation}\label{ar}
\alpha_{\le r}\colon \krn\e {\rm Cores}_{\e G}^{(r)}\to  \cok\e {\rm Res}_{\e G}^{(r)},
\end{equation}
namely the composition
\begin{equation}\label{ard}
\krn\e {\rm Cores}_{\e G}^{(r)}\underset{\sim}{\overset{a_{r}^{\le\prime}}{\lra}}
\cok\e \overline{\delta}^{(r-1)}\overset{\overline{\varphi}^{\le(r)}}{\lra}\krn\e \overline{\partial}^{\le(r)}\underset{\sim}{\overset{q_{r}^{\le\prime}}{\lra}}\cok\e {\rm Res}_{\e G}^{(r)}
\end{equation}
where $a_{\lle r}^{\le\prime}$ and $q_{r}^{\le\prime}$ are the maps \eqref{arp} and \eqref{qrp}, respectively, and $\overline{\varphi}^{\le(r)}$ fits into an exact and commutative diagram
\begin{equation} \label{phit}
\xymatrix{\cok\e {\rm Cores}_{\e G}^{(r-1)}\ar@{^{(}->}[r]^(.5){\overline{\delta}^{(r-1)}}& H^{\le r}\be(S_{\et},R_{\e S^{\lle\prime}\!/\lbe S}^{\e(1)}\lbe(G\e))\ar[d]_{\varphi^{\le(r)}}\ar@{->>}[r]&\cok\e \overline{\delta}^{(r-1)}\ar[dl]^(.45){\overline{\varphi}^{\le(r)}}.\\
&\krn\e \overline{\partial}^{\le(r)}&
}
\end{equation}
The preceding considerations remain valid when $r=0$ if we set
$\overline{\delta}^{\le(-1)}=\cok\e {\rm Cores}_{\e G}^{(-1)}=0$ above. The resulting map
\begin{equation}\label{ar0}
\alpha_{\le 0}\colon {}_{N}G(\lbe\spp\le)\to  G(\spp\le)\be/\lbe G(S\e)
\end{equation}
is the canonical map induced by the inclusion ${}_{N}G(\lbe\spp\le)\hookrightarrow  G(\spp\le)$

\smallskip

Next, let
\begin{equation}\label{br}
\beta_{\le r}\colon \cok\e {\rm Res}_{\e G}^{(r)}\to\krn\e {\rm Cores}_{\e G}^{(r)}
\end{equation}
be the composition of continuous arrows in the following diagram with exact rows
\begin{equation}\label{bas2}
\xymatrix{\cok\e {\rm Res}_{\e G}^{(r)}\ar@{^{(}->}[r]^(.45){\overline{q}^{\le(r)}}& H^{\le r}\be(S_{\fl},P_{\lbe S^{\lle\prime}\be/\lbe S}(G\e)\le)\ar@{..>>}[r]^(.57){\overline{\partial}^{\le(r)}}\ar[d]^(.43){\psi^{\lbe(r)}}&\krn {\rm Res}_{\e G}^{(r+1)}\\
\cok\e {\rm Cores}_{\e G}^{(r-1)}\ar@{^{(}..>}[r]^(.5){\overline{\delta}^{\le(r-1)}}& H^{\le r}\be(S_{\et},R_{\e S^{\lle\prime}\!/\lbe S}^{\e(1)}\lbe(G\e))\ar@{->>}[r]^(.57){\overline{a}^{\le(r)}}&\krn\e {\rm Cores}_{\e G}^{(r)},
}
\end{equation}
where the top and bottom rows are the exact sequences \eqref{fund1} and \eqref{fund2}, respectively. By \eqref{t1}, the map
\begin{equation}\label{br0}
\beta_{\le 0}\colon G(\lbe\spp\le)\be/\lbe G\lbe(\lbe S\le)\to {}_{N}G(\lbe\spp\le)
\end{equation}
is induced by $b_{\lle 0}\colon G(\spp\le)\to {}_{N}G(\spp\le),\, x\mapsto x^{n}(\e N_{\lbe S^{\lle\prime}\be/ S}(x))^{-1}$ \eqref{bmor0}.

The following statement is inmmediate from \eqref{evi}, \eqref{evi2}, \eqref{d2}, \eqref{d1}, \eqref{ar0} and \eqref{br0}.

\begin{theorem} \label{th1} If $(\e f, G\e)$ is an admissible pair (see Definition {\rm \ref{adm}}) and $r\geq 0$ is an integer, then there exist canonical maps $\alpha_{r}\colon \krn\e {\rm Cores}_{\e G}^{(r)}\to \cok\e {\rm Res}_{\e G}^{(r)}$ and $\beta_{\le r}\colon \cok\e {\rm Res}_{\e G}^{(r)}\to \krn\e {\rm Cores}_{\e G}^{(r)}$ such that the compositions
\[
\krn\e {\rm Cores}_{\e G}^{(r)}\overset{\!\alpha_{r}}{\lra}  \cok\e {\rm Res}_{\e G}^{(r)}\overset{\!\beta_{\le r}}{\lra} \krn\e {\rm Cores}_{\e G}^{(r)}
\]
and
\[
\cok\e {\rm Res}_{\e G}^{(r)}\overset{\!\beta_{\le r}}{\lra} \krn\e {\rm Cores}_{\e G}^{(r)}\overset{\!\alpha_{r}}{\lra} \cok\e {\rm Res}_{\e G}^{(r)}
\]
are the multiplication by $n$ maps on $\krn\e {\rm Cores}_{\e G}^{(r)}$ and $\cok\e {\rm Res}_{\e G}^{(r)}$, respectively.
\end{theorem}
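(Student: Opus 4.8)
The plan is to observe that the two maps have already been constructed in the excerpt—$\alpha_{\le r}$ as the composite \eqref{ard} and $\beta_{\le r}$ as the composite of the dotted arrows in \eqref{bas2}—so the only content left is to verify that $\beta_{\le r}\be\circ\be\alpha_{\le r}$ and $\alpha_{\le r}\be\circ\be\beta_{\le r}$ are multiplication by $n$. I would carry this out by a direct diagram chase through these two definitions, the engine being the identities \eqref{d1} and \eqref{d2}, which upon applying the additive functor $H^{\le r}(S_{\tau},-)$ to the sheaf equalities $\varphi\circ\psi=n_{\lbe P}$ and $\psi\circ\varphi=n_{\lbe R^{\le(1)}}$ give $\varphi^{(r)}\be\circ\be\psi^{(r)}=n$ on $H^{\le r}(S_{\fl},P_{\lbe S^{\lle\prime}\be/\lbe S}(G\e))$ and $\psi^{(r)}\be\circ\be\varphi^{(r)}=n$ on $H^{\le r}(S_{\et},R_{\e S^{\lle\prime}\!/\lbe S}^{\e(1)}\lbe(G\e))$. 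The two remaining ingredients are that $(\overline{q}^{\le(r)},q_{\lle r}^{\le\prime})$ and $(\widetilde{a}^{\le(r)},a_{\lle r}^{\le\prime})$ are mutually inverse isomorphisms, by \eqref{evi} and \eqref{evi2}.

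For $\beta_{\le r}\be\circ\be\alpha_{\le r}$ on $\krn {\rm Cores}_{\e G}^{(r)}$ I would write out the sixfold composite $\overline{a}^{\le(r)}\circ\psi^{(r)}\circ\overline{q}^{\le(r)}\circ q_{\lle r}^{\le\prime}\circ\overline{\varphi}^{\le(r)}\circ a_{\lle r}^{\le\prime}$, collapse $\overline{q}^{\le(r)}\circ q_{\lle r}^{\le\prime}$ to the inclusion $\krn\overline{\partial}^{\le(r)}\hookrightarrow H^{\le r}(S_{\fl},P_{\lbe S^{\lle\prime}\be/\lbe S}(G\e))$ via \eqref{evi}, and then invoke the defining property of $\overline{\varphi}^{\le(r)}$ in \eqref{phit}: for $x\in\krn {\rm Cores}_{\e G}^{(r)}$ choose a lift $\widetilde{x}\in H^{\le r}(S_{\et},R_{\e S^{\lle\prime}\!/\lbe S}^{\e(1)}\lbe(G\e))$ of $a_{\lle r}^{\le\prime}(x)\in\cok\overline{\delta}^{(r-1)}$, so that the inclusion of $\overline{\varphi}^{\le(r)}(a_{\lle r}^{\le\prime}(x))$ into the ambient group is $\varphi^{(r)}(\widetilde{x})$. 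Then \eqref{d2} turns $\psi^{(r)}(\varphi^{(r)}(\widetilde{x}))$ into $n\widetilde{x}$, while \eqref{evi2} forces $\overline{a}^{\le(r)}(\widetilde{x})=\widetilde{a}^{\le(r)}(a_{\lle r}^{\le\prime}(x))=x$, whence $\beta_{\le r}(\alpha_{\le r}(x))=\overline{a}^{\le(r)}(n\widetilde{x})=nx$.

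The computation of $\alpha_{\le r}\be\circ\be\beta_{\le r}$ on $\cok {\rm Res}_{\e G}^{(r)}$ is entirely symmetric: for $y\in\cok {\rm Res}_{\e G}^{(r)}$ put $w=\overline{q}^{\le(r)}(y)\in\krn\overline{\partial}^{\le(r)}$ and $u=\psi^{(r)}(w)$; using \eqref{evi2} to collapse $a_{\lle r}^{\le\prime}\circ\overline{a}^{\le(r)}$ to the projection onto $\cok\overline{\delta}^{(r-1)}$ and then \eqref{phit} to replace $\overline{\varphi}^{\le(r)}$ on the class of $u$ by $\varphi^{(r)}(u)$, the relation \eqref{d1} gives $\varphi^{(r)}(\psi^{(r)}(w))=nw$, so that $\alpha_{\le r}(\beta_{\le r}(y))=q_{\lle r}^{\le\prime}(nw)=q_{\lle r}^{\le\prime}(\overline{q}^{\le(r)}(ny))=ny$ by \eqref{evi}. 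The base case $r=0$ I would instead verify directly from the explicit formula \eqref{br0}: on $\krn {\rm Cores}_{\e G}^{(0)}={}_{N}G(\spp)$ one has $N_{\lbe S^{\lle\prime}\be/ S}(x)=1$, hence $b_{\lle 0}(x)=x^{\lle n}$, and the induced maps $\alpha_{\le 0}$ of \eqref{ar0} and $\beta_{\le 0}$ of \eqref{br0} compose to $x\mapsto x^{\lle n}$ on each side at once.

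The main obstacle is purely bookkeeping: the two composites live on the subquotients $\krn {\rm Cores}_{\e G}^{(r)}$ and $\cok {\rm Res}_{\e G}^{(r)}$, whereas the clean relations \eqref{d1} and \eqref{d2} live on the ambient cohomology groups. The one genuine point needing care is that passing to the induced maps $\overline{\varphi}^{\le(r)}$, $\overline{a}^{\le(r)}$, $q_{\lle r}^{\le\prime}$ is compatible with the lift--apply--project recipe above, i.e. that the choice of lift $\widetilde{x}$ is immaterial and that the commutativity recorded in \eqref{bas1} and \eqref{phit} makes the projections land precisely in $\krn\overline{\partial}^{\le(r)}$ and $\cok\overline{\delta}^{(r-1)}$ as required. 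Once this is granted everything reduces to the two multiplication-by-$n$ relations and the two pairs of mutually inverse isomorphisms, so the statement is indeed immediate from \eqref{evi}, \eqref{evi2}, \eqref{d2}, \eqref{d1}, \eqref{ar0} and \eqref{br0}.
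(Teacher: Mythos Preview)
Your proposal is correct and follows exactly the approach taken in the paper: the paper simply declares the theorem ``immediate from \eqref{evi}, \eqref{evi2}, \eqref{d2}, \eqref{d1}, \eqref{ar0} and \eqref{br0}'', and your write-up is precisely the diagram chase that unpacks this, reducing both composites to $\varphi^{(r)}\circ\psi^{(r)}=n$ and $\psi^{(r)}\circ\varphi^{(r)}=n$ after collapsing the mutually inverse pairs $(\overline{q}^{\le(r)},q_{\lle r}^{\e\prime})$ and $(\widetilde{a}^{\le(r)},a_{\lle r}^{\le\prime})$.
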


\smallskip

The theorem shows that the kernels and cokernels of the maps $\alpha_{r}$ and $\beta_{r}$ are $n$-torsion abelian groups. We will now describe these groups in terms of the flat (\e fppf\e) cohomology groups of the sheaves $G_{\lbe n}$ and $G(n)$ \eqref{gn}.

\smallskip

\begin{proposition}\label{ka} For every $r\geq 0$, there exist a canonical exact sequence of $n$-torsion abelian groups
\[
0\!\to\!\cok\e {\rm Cores}_{\e G}^{(r-1)}\!\to\!\be \img\e \iota^{\lbe(r)}\!\!\to\! \krn\e \alpha_{r}\to 0
\]
and an isomorphism of $n$-torsion abelian groups
\[
\cok\e\alpha_{r}\simeq \krn\e \iota^{\lbe(r+1)},
\]	
where $\iota^{(\lle r\lle)}\colon H^{\le r}\be(S_{\fl},G_{\lbe n}\lbe)\to H^{\le r}\be(S_{\et},R_{\e S^{\lle\prime}\!/\lbe S}^{\e(1)}\lbe(G\e))$ is the map in \eqref{phis}.
\end{proposition}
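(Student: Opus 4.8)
The plan is to reduce the entire statement to the single induced map $\overline{\varphi}^{(r)}$ of diagram \eqref{phit} and then to feed the long exact sequence \eqref{phis} into Lemma \ref{ker-cok}. I would begin by recording that, by its definition \eqref{ard}, $\alpha_{r}=q_{r}^{\prime}\circ\overline{\varphi}^{(r)}\circ a_{r}^{\prime}$, where the flanking maps $a_{r}^{\prime}$ \eqref{arp} and $q_{r}^{\prime}$ \eqref{qrp} are isomorphisms; hence there are canonical isomorphisms $\krn\alpha_{r}\simeq\krn\overline{\varphi}^{(r)}$ and $\cok\alpha_{r}\simeq\cok\overline{\varphi}^{(r)}$, and it suffices to compute the kernel and cokernel of $\overline{\varphi}^{(r)}\colon\cok\overline{\delta}^{(r-1)}\to\krn\overline{\partial}^{(r)}$. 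The $n$-torsion assertions are then automatic: $\krn\alpha_{r}$ and $\cok\alpha_{r}$ are annihilated by $n$ by Theorem \ref{th1}, while $\img\iota^{(r)}$ and $\krn\iota^{(r+1)}$ are a quotient of $H^{r}(S_{\fl},G_{n})$ and a subgroup of $H^{r+1}(S_{\fl},G_{n})$, both $n$-torsion since $G_{n}$ is.

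For the kernel I would apply Lemma \ref{ker-cok} to the composition
\[
H^{r}(S_{\et},R_{S^{\prime}/S}^{(1)}(G))\overset{\pi}{\twoheadrightarrow}\cok\overline{\delta}^{(r-1)}\overset{\overline{\varphi}^{(r)}}{\lra}\krn\overline{\partial}^{(r)},
\]
in which $\pi$ is the projection appearing in \eqref{phit}, so that $\overline{\varphi}^{(r)}\circ\pi=\varphi^{(r)}$. Since $\pi$ is surjective with $\krn\pi=\img\overline{\delta}^{(r-1)}\simeq\cok{\rm Cores}_{G}^{(r-1)}$ by the left-hand exactness of \eqref{fund2}, the six-term sequence of Lemma \ref{ker-cok} collapses to $0\to\cok{\rm Cores}_{G}^{(r-1)}\to\krn\varphi^{(r)}\to\krn\overline{\varphi}^{(r)}\to 0$. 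The exactness of \eqref{phis} identifies $\krn\varphi^{(r)}$ with $\img\iota^{(r)}$, and transporting $\krn\overline{\varphi}^{(r)}$ to $\krn\alpha_{r}$ along $a_{r}^{\prime}$ yields precisely the first exact sequence of the proposition.

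For the cokernel I would instead apply Lemma \ref{ker-cok} to the composition
\[
\krn{\rm Cores}_{G}^{(r)}\overset{\alpha_{r}}{\lra}\cok{\rm Res}_{G}^{(r)}\overset{\overline{q}^{(r)}}{\hookrightarrow}H^{r}(S_{\fl},P_{S^{\prime}/S}(G)).
\]
Since $\overline{q}^{(r)}$ is injective and $\overline{q}^{(r)}\circ\alpha_{r}=\overline{\varphi}^{(r)}\circ a_{r}^{\prime}$ (by \eqref{evi}) has image $\img\varphi^{(r)}$, this composite has cokernel $\cok\varphi^{(r)}$, which \eqref{clear1} identifies with $\krn\iota^{(r+1)}$; moreover $\cok\overline{q}^{(r)}=\krn{\rm Res}_{G}^{(r+1)}$ by \eqref{fund1}. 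Lemma \ref{ker-cok} therefore produces the exact sequence
\[
0\to\cok\alpha_{r}\to\krn\iota^{(r+1)}\to\krn{\rm Res}_{G}^{(r+1)}\to 0 .
\]
The main obstacle is precisely this last arrow: the asserted isomorphism $\cok\alpha_{r}\simeq\krn\iota^{(r+1)}$ is equivalent to the vanishing of the comparison map $\krn\iota^{(r+1)}\to\krn{\rm Res}_{G}^{(r+1)}$, which arises because $\overline{\varphi}^{(r)}$ lands in the subgroup $\krn\overline{\partial}^{(r)}\subseteq H^{r}(S_{\fl},P_{S^{\prime}/S}(G))$ rather than in the whole group. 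Tracing the morphism of short exact sequences from \eqref{sap1} to \eqref{se1} induced by the inclusion $R_{S^{\prime}/S}^{(1)}(G)\hookrightarrow R_{S^{\prime}/S}(G_{S^{\prime}})$ shows this arrow to be the restriction to $\krn\iota^{(r+1)}$ of the map $H^{r+1}(S_{\fl},G_{n})\to H^{r+1}(S_{\et},G)$ induced by $G_{n}\hookrightarrow G$; proving it vanishes on $\krn\iota^{(r+1)}$ is the one genuinely nontrivial point, and I would attack it by comparing the connecting homomorphism of \eqref{sap1} with that of the Kummer sequence $0\to G_{n}\to G\overset{n}{\to}G\to 0$, which is exact in $\sfs$ by Proposition \ref{isg2}.
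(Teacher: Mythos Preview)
Your treatment of the exact sequence for $\krn\alpha_{r}$ is exactly the paper's: both apply Lemma~\ref{ker-cok} to the surjection $\pi\colon H^{r}(S_{\et},R^{(1)}_{S'/S}(G))\twoheadrightarrow\cok\overline{\delta}^{(r-1)}$ followed by $\overline{\varphi}^{(r)}$, obtaining $0\to\cok{\rm Cores}_{G}^{(r-1)}\to\img\iota^{(r)}\to\krn\overline{\varphi}^{(r)}\to 0$ and then transporting along $a_{r}^{\prime}$.

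For the cokernel you diverge from the paper, and your more careful route exposes a genuine problem. The paper applies Lemma~\ref{ker-cok} to the \emph{same} triangle in \eqref{phit} (where $\varphi^{(r)}$ has been corestricted to $\krn\overline{\partial}^{(r)}$) and concludes $\cok\overline{\varphi}^{(r)}\simeq\cok\varphi^{(r)}_{0}$, where $\varphi^{(r)}_{0}$ is this corestricted map; it then invokes \eqref{clear1}. But \eqref{clear1} identifies $\krn\iota^{(r+1)}$ with the cokernel of the \emph{full} map $\varphi^{(r)}\colon H^{r}(S_{\et},R^{(1)})\to H^{r}(S_{\fl},P)$, not of $\varphi^{(r)}_{0}$. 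Your factorization $\overline{q}^{(r)}\circ\alpha_{r}$ makes this discrepancy explicit: it produces the exact sequence
\[
0\to\cok\alpha_{r}\to\krn\iota^{(r+1)}\to\krn{\rm Res}_{G}^{(r+1)}\to 0,
\]
with the last map induced by $G_{n}\hookrightarrow G$ on $H^{r+1}$, exactly as you trace. Since this map is already surjective by exactness, the asserted isomorphism $\cok\alpha_{r}\simeq\krn\iota^{(r+1)}$ holds if and only if $\krn{\rm Res}_{G}^{(r+1)}=0$, which fails in general (for instance Lemma~\ref{prea} exhibits nontrivial $\krn{\rm Res}_{\mathcal A}^{(3)}$). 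Your proposed Kummer-sequence attack therefore cannot succeed: the obstacle you single out is not a technicality but reflects that the stated isomorphism is too strong as written; what your argument actually proves (and what the paper's argument, read correctly, also yields) is the canonical injection $\cok\alpha_{r}\hookrightarrow\krn\iota^{(r+1)}$ with cokernel $\krn{\rm Res}_{G}^{(r+1)}$.
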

\begin{proof} By Remark \ref{ntor}(a), $\cok\e {\rm Cores}_{\e G}^{(r-1)}$ is an 
$n$-torsion abelian group. Now, it follows from the definition of $\alpha_{\lle r}$ \eqref{ard} that $a_{r}^{\le\prime}$ \eqref{arp} and $\overline{q}_{r}$ \eqref{qer} induce isomorphisms of abelian groups $\krn\e \alpha_{\lle r}\isoto \krn\e\overline{\varphi}^{(r)}$ and $\cok\e \alpha_{\lle r}\isoto \cok\e\overline{\varphi}^{\le(r)}$. On the other hand, an application of Lemma \ref{ker-cok} to the triangle in diagram \eqref{phit}, using the identity $\krn\e \varphi^{(r)}=\img\e \iota^{\lbe(r)}$ and the isomorphism \eqref{clear1}, yields an isomorphism $\cok\e\overline{\varphi}^{\le(r)}\simeq \krn\e \iota^{\lbe(r+1)}$ and an exact sequence
\[
0\to\cok\e {\rm Cores}_{\e G}^{(r-1)}\to\img\e \iota^{\lbe(r)}\to \krn\e\overline{\varphi}^{\le(r)}\to 0.
\]
The proposition is now clear.
\end{proof}

\smallskip

The analog of Proposition \ref{ka} for the map $\beta_{r}$ is (significantly) more complicated.

Consider the composition
\begin{equation}\label{brp}
\beta_{\le r}^{\lle\prime}\colon H^{\le r}\be(S_{\fl},P_{\lbe S^{\lle\prime}\!/\lbe S}(G\e)\le)\overset{\psi^{\le(r)}}{\lra}H^{\le r}\be(S_{\et},R_{\e S^{\lle\prime}\!/\lbe S}^{\e(1)}\lbe(G\e))\overset{\,\, \overline{a}^{\le(\lbe r\lbe)}}{\twoheadrightarrow}\krn\e {\rm Cores}_{\e G}^{(r)}.
\end{equation}
Then $\beta_{\le r}$ \eqref{br} factors as
\begin{equation}\label{fact1}
\cok\e {\rm Res}_{\e G}^{(r)}\overset{\,\, \overline{q}^{\le(\lbe r\lbe)}}{\hookrightarrow}
H^{\le r}\be(S_{\fl},P_{\lbe S^{\lle\prime}\!/\lbe S}\lbe(G\e))\overset{\!\beta_{\le r}^{\lle\prime}}{\lra}\krn\e {\rm Cores}_{\e G}^{(r)}.
\end{equation}
Now let
\begin{equation}\label{lr}
\lambda^{\lbe(r)}\colon \krn\e\overline{a}^{(r)}\hookrightarrow H^{\le r}\be(S_{\et},R_{\e S^{\lle\prime}\!/\lbe S}^{\e(1)}\lbe(G\e))\twoheadrightarrow\cok\e \psi^{(r)}
\end{equation}
and
\begin{equation}\label{dtil}
\widetilde{\partial}^{\le(r)}\colon \krn\e\beta_{\le r}^{\lle\prime}\hookrightarrow H^{\le r}\be(S_{\fl},P_{\lbe S^{\lle\prime}\!/\lbe S}(G\e)\le)\twoheadrightarrow \cok\e\overline{q}^{\le(r)}\isoto \krn {\rm Res}_{\e G}^{(r+1)},
\end{equation}
be the maps induced by \eqref{brp} and \eqref{fact1}, where the isomorphism in \eqref{dtil} is induced by $\overline{\partial}^{\le(r)}$ \eqref{dbar}. Applying Lemma \ref{ker-cok} and Remark \ref{k1} to the pairs of maps \eqref{brp} and \eqref{fact1}, we obtain canonical exact sequences of abelian groups
\begin{equation}\label{topi}
0\to \krn\e\psi^{(r)}\to \krn\e\beta_{\le r}^{\e\prime}\to\krn\e\overline{a}^{(r)}\overset{\!\lambda^{\lbe(r)}}{\lra}\cok\psi^{\le(r)}\to \cok\e \beta_{\lle r}^{\lle\prime}\to 0
\end{equation}
and
\begin{equation}\label{el1}
0\to \krn\e \beta_{\lle r}\to \krn\e \beta_{\lle r}^{\e\prime}\overset{\!\widetilde{\partial}^{\le(r)}}{\lra}\krn {\rm Res}_{\e G}^{(r+1)}\to \cok\e \beta_{\lle r}\to \cok\e \beta_{\lle r}^{\lle\prime}\to 0,
\end{equation}
where $\lambda^{\lbe(r)}$ and $\widetilde{\partial}^{\le(r)}$ are the compositions \eqref{lr} and \eqref{dtil}, respectively. Clearly, \eqref{el1} induces an isomorphism 
\begin{equation}\label{el2}
\krn\e \beta_{\lle r}\isoto\krn\e \widetilde{\partial}^{\le(r)}
\end{equation}
and a canonical exact sequence of abelian groups
\begin{equation}\label{el3}
0\to \cok\e \widetilde{\partial}^{\le(r)}\to \cok\e \beta_{\lle r}\to \cok\e \beta_{\lle r}^{\lle\prime}\to 0.
\end{equation}

\smallskip

Now let
\begin{equation}\label{dr}
c_{\le r}\colon \cok\e{\rm Cores}_{\e G}^{(r-1)}\to H^{\le r+1}\be(S_{\fl},G(n))
\end{equation}
be the composition
\begin{equation}\label{dr1}
c_{\le r}\colon \cok\e{\rm Cores}_{\e G}^{(r-1)}\overset{c_{ r}^{\lle\prime}}{\lra}\krn\e\ell^{\le(r+1)}\hookrightarrow  H^{\le r+1}\be(S_{\fl},G(n)),
\end{equation}
where the second map is the inclusion and $c_{\le r}^{\e\prime}$ is the composition
\begin{equation}\label{drp}
c_{\le r}^{\e\prime}\colon \cok\e {\rm Cores}_{\e G}^{(r-1)}\underset{\sim}{\overset{\overline{\delta}^{\le(r-1)}}{\lra}} \krn\e\overline{a}^{\le(r)}\overset{\lambda^{\lbe(r)}}{\lra}\cok\e \psi^{\e(r)}\isoto \krn\e\ell^{\le(r+1)},
\end{equation}
where the first isomorphism comes from \eqref{fund2}, $\lambda^{\lbe(r)}$ is the map \eqref{lr} and the second isomorphism is the map \eqref{clear2}. Clearly,  $\overline{\delta}^{\le(r-1)}$ induces an isomorphism of abelian groups $\krn\e c_{\le r}^{\e\prime}=\krn\e c_{\le r}\isoto \krn\e \lambda^{\be(r)}$, whence \eqref{topi} induces an exact sequence of abelian groups 
\begin{equation}\label{topi2}
0\to \krn\e\psi^{(r)}\to \krn\e\beta_{\le r}^{\e\prime}\to\krn\e c_{\le r}\to 0.
\end{equation}

Next let
\begin{equation}\label{cr}
d_{\le r}\colon H^{\le r}\be(S_{\fl},G(n))\to \krn {\rm Res}_{\e G}^{(r+1)}
\end{equation}
be the composition
\begin{equation}\label{new}
H^{\le r}\be(S_{\fl},G(n))\overset{\!d_{\le r}^{\e\prime}}{\lra}\krn\e\beta_{\le r}^{\e\prime}\overset{\!\widetilde{\partial}^{\le(r)}}{\lra}\krn {\rm Res}_{\e G}^{(r+1)},
\end{equation}
where $\widetilde{\partial}^{\le(r)}$ is the map \eqref{dtil}
and $d_{\le r}^{\e\prime}$ is the composition
\begin{equation}\label{crp}
d_{\le r}^{\e\prime}\colon H^{\le r}\be(S_{\fl},G(n))\twoheadrightarrow\img\e\ell^{\le(r)}=\krn\e\psi^{(r)}\hookrightarrow \krn\e\beta_{\lle r}^{\e\prime}\,,
\end{equation}
where the equality holds by the exactness of \eqref{psis} and the injection is that in \eqref{topi}. By the definition of $d_{\le r}^{\e\prime}$ \eqref{crp} and the exactness of \eqref{topi2}, there exists a canonical exact sequence of abelian groups
\begin{equation}\label{nic}
H^{\le r}\be(S_{\fl},G(n))\overset{\!d_{\le r}^{\e\prime}}{\lra}\krn\e\beta_{\lle r}^{\e\prime}\to \krn\e c_{\le r}\to 0.
\end{equation}

\smallskip

Now, by Remark \ref{ntor}(a), the maps \eqref{dr} and \eqref{cr} are morphisms of $n$-torsion abelian groups. Further, there exists a canonical morphism of $n$-torsion abelian groups
\begin{equation}\label{gr}
\gamma_{\lle r}\colon \krn\e c_{\le r}\to\cok\e d_{\lle r}
\end{equation}
such that the following diagram with exact rows commutes: 
\begin{equation}\label{gap}
\xymatrix{H^{\le r}\be(S_{\fl},G(n))\ar@{=}[d]\ar[r]^(.55){d_{\le r}^{\e\prime}}&\krn\e\beta_{\le r}^{\lle\prime}\ar[r]\ar[d]^(.45){\widetilde{\partial}^{\le(r)}}&\ar[d]^(.45){\gamma_{r}}\krn\e c_{\le r}\ar[r]&0\\
H^{\le r}\be(S_{\fl},G(n))\ar[r]^(.52){d_{r}}&\krn {\rm Res}_{\e G}^{(r+1)}\ar[r]&\cok\e d_{\le r}\ar[r]&0,}
\end{equation}
where the top row is the sequence \eqref{nic} and the lef-hand square commutes by the definition of $d_{\le r}$ \eqref{new}.

\begin{proposition}\label{ak} There exists a canonical exact sequence of $n$-torsion abelian groups
\[
\krn\e d_{\lle r}\to \krn\e \beta_{\lle r}\to \krn\e c_{\lle r}\overset{\!\be\gamma_{r}}{\to}\cok\e d_{\lle r}\to \cok\e \beta_{\lle r}\to \cok\e c_{\le r},
\]
where $c_{\le r}$ and $d_{\le r}$ are the maps \eqref{dr} and \eqref{cr}, respectively, and $\gamma_{r}$ is defined by the commutativity of diagram \eqref{gap}.
\end{proposition}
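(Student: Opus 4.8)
The plan is to obtain the six-term sequence from a single application of Lemma~\ref{ker-cok} to the factorization \eqref{new} of $d_{\le r}$, followed by a splicing at $\cok\e d_{\le r}$ with the sequence \eqref{el3}.

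First I would apply Lemma~\ref{ker-cok} to the pair of maps
$H^{\le r}\be(S_{\fl},G(n))\overset{d_{\le r}^{\e\prime}}{\lra}\krn\e\beta_{\lle r}^{\e\prime}\overset{\widetilde{\partial}^{\le(r)}}{\lra}\krn {\rm Res}_{\e G}^{(r+1)}$,
whose composition is $d_{\le r}$ by \eqref{new}. This yields a canonical exact sequence
\[
0\to\krn d_{\le r}^{\e\prime}\to\krn d_{\le r}\to\krn\e\widetilde{\partial}^{\le(r)}\to\cok d_{\le r}^{\e\prime}\to\cok d_{\le r}\to\cok\e\widetilde{\partial}^{\le(r)}\to 0.
\]
Into this I would substitute the identifications $\krn\e\widetilde{\partial}^{\le(r)}=\krn\e\beta_{\lle r}$ (from \eqref{el2}) and $\cok d_{\le r}^{\e\prime}=\krn\e c_{\le r}$ (from the exactness of \eqref{nic}, equivalently \eqref{topi2}, since $\img d_{\le r}^{\e\prime}=\krn\e\psi^{(r)}$ by \eqref{crp}). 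By Remark~\ref{k1} the map $\cok d_{\le r}^{\e\prime}\to\cok d_{\le r}$ is the one induced by $\widetilde{\partial}^{\le(r)}$ on cokernels, which under these identifications is exactly the map $\gamma_{r}$ determined by the right-hand square of \eqref{gap}. This already gives the exactness of $\krn d_{\le r}\to\krn\e\beta_{\lle r}\to\krn\e c_{\le r}\overset{\gamma_{r}}{\to}\cok d_{\le r}$ and, reading off the cokernel, the identification $\cok\e\gamma_{r}\simeq\cok\e\widetilde{\partial}^{\le(r)}$.

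It remains to prolong the sequence beyond $\cok d_{\le r}$. Here I would invoke \eqref{el3}, namely $0\to\cok\e\widetilde{\partial}^{\le(r)}\to\cok\e\beta_{\lle r}\to\cok\e\beta_{\lle r}^{\e\prime}\to 0$, and define $\cok d_{\le r}\to\cok\e\beta_{\lle r}$ as the composition $\cok d_{\le r}\twoheadrightarrow\cok\e\gamma_{r}\simeq\cok\e\widetilde{\partial}^{\le(r)}\hookrightarrow\cok\e\beta_{\lle r}$; exactness at $\cok d_{\le r}$ is then automatic because the last arrow is injective. To construct the final map and verify exactness at $\cok\e\beta_{\lle r}$ I would identify $\cok\e\beta_{\lle r}^{\e\prime}$ with a subgroup of $\cok\e c_{\le r}$: from the exactness of \eqref{topi} one has $\cok\e\beta_{\lle r}^{\e\prime}=\cok\e\lambda^{\lbe(r)}$, while the factorization \eqref{drp} of $c_{\le r}^{\e\prime}$ through $\lambda^{\lbe(r)}$ (composed with the isomorphism $\overline{\delta}^{\le(r-1)}$ and the isomorphism \eqref{clear2}) gives a canonical isomorphism $\cok\e\beta_{\lle r}^{\e\prime}\simeq\cok\e c_{\le r}^{\e\prime}$. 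Since $c_{\le r}$ is, by \eqref{dr1}, the composition of $c_{\le r}^{\e\prime}$ with the inclusion $\krn\e\ell^{\le(r+1)}\hookrightarrow H^{\le r+1}\be(S_{\fl},G(n))$, the natural map $\cok\e c_{\le r}^{\e\prime}\to\cok\e c_{\le r}$ is injective; I would then take $\cok\e\beta_{\lle r}\to\cok\e c_{\le r}$ to be $\cok\e\beta_{\lle r}\twoheadrightarrow\cok\e\beta_{\lle r}^{\e\prime}\simeq\cok\e c_{\le r}^{\e\prime}\hookrightarrow\cok\e c_{\le r}$. Its kernel equals $\krn[\e\cok\e\beta_{\lle r}\twoheadrightarrow\cok\e\beta_{\lle r}^{\e\prime}\e]=\cok\e\widetilde{\partial}^{\le(r)}$, which is precisely the image of $\cok d_{\le r}\to\cok\e\beta_{\lle r}$ by \eqref{el3}, giving exactness at $\cok\e\beta_{\lle r}$.

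Concatenating these pieces produces the asserted sequence, and all six groups are $n$-torsion by Remark~\ref{ntor}(a) together with the fact (noted after Theorem~\ref{th1}) that $\krn\e\beta_{\lle r}$ and $\cok\e\beta_{\lle r}$ are $n$-torsion. The one genuinely delicate step I anticipate is the compatibility in the last paragraph: I must check that the isomorphism $\cok\e\beta_{\lle r}^{\e\prime}\simeq\cok\e c_{\le r}^{\e\prime}$ coming from \eqref{topi}, \eqref{drp} and \eqref{clear2} is compatible with the surjection $\cok\e\beta_{\lle r}\twoheadrightarrow\cok\e\beta_{\lle r}^{\e\prime}$ of \eqref{el3}, so that the resulting composite has kernel exactly $\cok\e\widetilde{\partial}^{\le(r)}$. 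This amounts to unwinding the definitions of $\beta_{\lle r}^{\e\prime}$ \eqref{brp}, $\lambda^{\lbe(r)}$ \eqref{lr} and $c_{\le r}^{\e\prime}$ \eqref{drp} and tracing the identifications consistently through diagram \eqref{gap}.
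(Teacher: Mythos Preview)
Your proposal is correct and follows essentially the same route as the paper: apply Lemma~\ref{ker-cok} to the factorization \eqref{new}, substitute via \eqref{el2} and \eqref{nic}, splice with \eqref{el3}, and finish by identifying $\cok\e\beta_{\lle r}^{\e\prime}\simeq\cok\e c_{\le r}^{\e\prime}$ (using \eqref{topi}, \eqref{drp}, \eqref{clear2}) together with the injection $\cok\e c_{\le r}^{\e\prime}\hookrightarrow\cok\e c_{\le r}$ coming from \eqref{dr1}. The compatibility you flag as ``delicate'' is exactly what the paper handles by exhibiting the commutative square with rows the cokernel parts of \eqref{topi} and of the definition of $c_{\le r}^{\e\prime}$, so your anticipated check is routine.
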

\begin{proof} Applying Lemma \ref{ker-cok} to the pair of maps \eqref{new} and using the isomorphism \eqref{el2} and the exactness of the sequences \eqref{el3} and \eqref{nic}, we obtain a canonical exact sequence of abelian groups
\[
\krn d_{\le r}\to\krn\e\beta_{\le r}\to \krn\e c_{\le r}\overset{\!\be\gamma_{r}}{\to}\cok\e d_{\le r}\to \cok\e\beta_{\le r}\to \cok\e \beta_{\le r}^{\lle\prime}\to 0.
\]
On the other hand, there exists a canonical isomorphism of abelian groups
$\cok\e \beta_{\lle r}^{\e\prime}\isoto\cok\e c_{\le r}^{\e\prime}$ such that the following diagram with exact rows commutes:
\[
\xymatrix{
\krn\e\overline{a}^{(r)}\ar[r]^{\lambda^{(r)}}\ar[d]_(.45){\sim}&\cok\e \psi^{\e(r)}\ar[d]_(.45){\sim}\ar[r]&\cok\e \beta_{\lle r}^{\lle\prime}\ar[d]_(.45){\sim}\ar[r]&0\\
\cok\e{\rm Cores}_{\e G}^{(r-1)}\ar[r]^(.55){c_{\le r}^{\e\prime}}&\krn\e\ell^{\e(r+1)}\ar[r]&\cok\e c_{\le r}^{\e\prime}\ar[r]&0,
}
\]
where the top row is part of the sequence \eqref{topi}, $c_{\le r}^{\e\prime}$ is the composition \eqref{drp}, the left-hand vertical arrow is the inverse of the map $\overline{\delta}^{\le(r-1)}$ and the middle vertical arrow is the map \eqref{clear2}. Finally, the inclusion in \eqref{dr1} induces an injection $\cok\e c_{\lle r}^{\e\prime}\hookrightarrow \cok\e c_{\lle r}$, whence the proposition follows.
\end{proof}

\section{Quadratic Galois coverings}\label{quad}

In order to simplify the statements below, we make the following

\begin{definition}\label{qadm} An {\it admissible quadratic Galois pair} is a pair $(\e f,G\e)$ where
\begin{enumerate}
\item[(i)] $f\colon \spp\to S$ is a quadratic (i.e., of constant rank $2$) Galois covering of locally noetherian schemes (in particular, $f$ is \'etale),
\item[(ii)] $G$ is a smooth, commutative and quasi-projective $S$-group scheme with connected fibers, and
\item[(iii)] for every point $s\in S$ such that ${\rm char}\, k(\lbe s\lbe)=2$, $G_{k(\lbe s\lbe)}$ is a semiabelian $k(\lbe s\lbe)$-variety.
\end{enumerate}
\end{definition}

Clearly, an admissible quadratic Galois pair is admissible in the sense of Definition \ref{adm}. It was shown in \cite[\S5]{ga18b} that, if $f\colon \spp\to S$ is a Galois covering of constant rank $n\geq 2$, then the fppf sheaf $G(n)$ \eqref{gn} is an $(\spp\!/\lbe S\e)$-form of $G_{\lbe n}^{\e n-2}$, i.e., there exists an isomorphism of $\spp$-group schemes $G(n)_{S^{\lle\prime}}\isoto G_{\lbe n,\e S^{\lle\prime}}^{\e n-2}$. Consequently, if $(\e f,G\e)$ is an admissible quadratic Galois pair, then $G(n)=G(2)=0$
and \eqref{dr} and \eqref{cr} yield equalities
$\krn\e d_{\lle r}=\cok\e c_{\lle r}=0$, $\krn\e c_{\lle r}=\cok\e{\rm Cores}_{\e G}^{(r-1)}$ and $\cok\e d_{\lle r}=\krn {\rm Res}_{\e G}^{(r+1)}$ for every $r\geq 0$. Thus \eqref{gr} is a morphism of $2$-torsion abelian groups
\begin{equation}\label{gr2}
\gamma_{r}\colon \cok\e {\rm Cores}_{\e G}^{(r-1)}\to \krn {\rm Res}_{\e G}^{(r+1)}
\end{equation}
which can be described as follows. By the vanishing of $G(2)$ and the exactness of \eqref{sap2}, the map $\psi\colon P_{\lbe S^{\lle\prime}\be/\lbe S}(G\e)\to R_{\e S^{\lle\prime}\!/\lbe S}^{\e(1)}\lbe(G\e)$ \eqref{psi} is an isomorphism in $\sfs$, whence the induced morphism of abelian groups
$\psi^{(\lle r\lle)}\colon H^{\le r}\be(S_{\fl},P_{\lbe S^{\lle\prime}\be/S}(G\e))\isoto H^{\le r}\be(S_{\et},R_{\e S^{\lle\prime}\!/\lbe S}^{\e(1)}\lbe(G\e))$ is an isomorphism as well. Further, the commutativity of \eqref{gap} and the definitions of the maps $\widetilde{\partial}^{\le(r)}$ \eqref{dtil} and $\beta_{\le r}^{\lle\prime}$ \eqref{brp} show that the map \eqref{gr2} is induced by the composition
\[
H^{\le r-1}\be(S_{\et},G\e)\overset{\delta^{(\lle r-1\lle)}}{\lra} H^{\le r}\be(S_{\et},R_{\e S^{\lle\prime}\!/\lbe S}^{\e(1)}\lbe(G\e))\overset{\left(\!\psi^{(\lbe r\lle)}\!\right)^{\!-1}}{\underset{\!\sim}{\lra}}H^{\le r}\be(S_{\fl},P_{\lbe S^{\lle\prime}\be/S}(G\e))\overset{\partial^{(\lle r\lle)}}{\lra}H^{\le r+1}\be(S_{\et},G\e),
\] 
where the maps $\delta^{(\lle r-1\lle)}$ and $\partial^{(\lle r\lle)}$ are the connecting homomorphisms appearing in the sequences \eqref{lse2} and \eqref{lse1}, respectively.

Now Proposition \ref{ak} yields an exact sequence of $2$-torsion abelian groups 
\begin{equation}\label{ko2}
0\to \krn\e \beta_{\lle r}\to \cok\e {\rm Cores}_{\e G}^{(r-1)}\overset{\!\be\gamma_{\lbe r}}{\to}\krn {\rm Res}_{\e G}^{(r+1)}\to \cok\e \beta_{\lle r}\to 0, 
\end{equation}
where $\gamma_{r}$ is the map \eqref{gr2} and $\beta_{\lle r}\colon \cok\e {\rm Res}_{\e G}^{(r)}\to\krn\e {\rm Cores}_{\e G}^{(r)}$ is the map \eqref{br}. Consequently, the following holds.

\begin{theorem}\label{boo}  Let $(\e f,G\e)$ be an admissible quadratic Galois pair (see Definition {\rm \ref{qadm}}). Then, for every integer $r\geq 0$, there exists a canonical exact sequence of abelian groups
\[
0\to \krn\e \gamma_{\lle r}\to \cok\e {\rm Res}_{\e G}^{(r)}\overset{\!\be\beta_{ r}}{\to}\krn\e {\rm Cores}_{\e G}^{(r)}\to \cok\e \gamma_{\lle r}\to 0, 
\]
where $\beta_{\lle r}$ is the map \eqref{br} and $\gamma_{\lle r}\colon \cok\e {\rm Cores}_{\e G}^{(r-1)}\to \krn {\rm Res}_{\e G}^{(r+1)}$ is the map \eqref{gr2}.
\end{theorem}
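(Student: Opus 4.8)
The plan is to read off the asserted four-term sequence directly from the exact sequence \eqref{ko2}, which has already been established as a consequence of Proposition \ref{ak} together with the vanishing $G(2)=0$ in the quadratic Galois case. The essential point is that \eqref{ko2} computes \emph{both} the kernel and the cokernel of $\beta_{\lle r}$ in terms of those of $\gamma_{\lle r}$, and that the desired statement is then merely the tautological four-term sequence attached to the map $\beta_{\lle r}$, rewritten via these identifications.

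First I would extract two canonical isomorphisms from \eqref{ko2}. Since its leftmost arrow is injective and its image is, by exactness at $\cok\e{\rm Cores}_{\e G}^{(r-1)}$, precisely $\krn\e\gamma_{\lle r}$, that arrow induces a canonical isomorphism
\[
\krn\e\beta_{\lle r}\isoto\krn\e\gamma_{\lle r}.
\]
Dually, the rightmost arrow of \eqref{ko2} is surjective with kernel $\img\e\gamma_{\lle r}$ (exactness at $\krn{\rm Res}_{\e G}^{(r+1)}$ and at $\cok\e\beta_{\lle r}$), so it induces a canonical isomorphism
\[
\cok\e\gamma_{\lle r}=\krn{\rm Res}_{\e G}^{(r+1)}/\img\e\gamma_{\lle r}\isoto\cok\e\beta_{\lle r}.
\]

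Next I would write the tautological exact sequence associated to the map $\beta_{\lle r}\colon\cok\e{\rm Res}_{\e G}^{(r)}\to\krn\e{\rm Cores}_{\e G}^{(r)}$,
\[
0\to\krn\e\beta_{\lle r}\to\cok\e{\rm Res}_{\e G}^{(r)}\overset{\!\beta_{\lle r}}{\to}\krn\e{\rm Cores}_{\e G}^{(r)}\to\cok\e\beta_{\lle r}\to 0,
\]
and substitute the two isomorphisms above for its outer terms, leaving the middle map $\beta_{\lle r}$ untouched. This yields exactly the exact sequence of the theorem.

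The argument is entirely formal once \eqref{ko2} is in hand, so I do not expect any serious obstacle at this stage; all of the genuine homological work has been absorbed into Proposition \ref{ak} (and, through it, into the diagram chases of Section \ref{rc}). The only point deserving a word of care is \emph{canonicity}: one must confirm that the identifications $\krn\e\beta_{\lle r}\simeq\krn\e\gamma_{\lle r}$ and $\cok\e\beta_{\lle r}\simeq\cok\e\gamma_{\lle r}$ are the natural ones produced by \eqref{ko2}, so that the resulting sequence is canonical as asserted. This is immediate from the explicit description of the connecting maps in Remark \ref{k1} and from the construction of \eqref{ko2} out of Proposition \ref{ak}.
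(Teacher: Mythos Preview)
Your proposal is correct and follows essentially the same approach as the paper, which simply states that Theorem~\ref{boo} holds ``consequently'' from \eqref{ko2}. You have spelled out in detail the tautological passage from the four-term sequence \eqref{ko2} (which computes $\krn\beta_{\lle r}$ and $\cok\beta_{\lle r}$ in terms of $\gamma_{\lle r}$) to the canonical four-term sequence attached to $\beta_{\lle r}$ with those identifications substituted at the ends, which is exactly what the paper intends.
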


If $(\e f,G\e)$ is an admissible quadratic Galois pair with Galois group $\Delta=\{1,\tau\}$, then $\Delta$ acts naturally on $H^{\le r}\be(\spp_{\et}, G\e)$ and the definitions \eqref{bmor} and \eqref{t1} show that the map $\beta_{\lle r}\colon \cok\e {\rm Res}_{\e G}^{(r)}\to\krn\e {\rm Cores}_{\le G}^{(r)}$ \eqref{br} is induced by the map $1-\tau\colon H^{\le r}\be(\spp_{\et}, G\e)\to H^{\le r}\be(\spp_{\et}, G\e)$. Thus $\krn\le\beta_{\lle r}$ is the cokernel of the map $H^{\lle r}\be(S_{\et},G\e)\to H^{\le r}\be(\spp_{\et}, G\e)^{\Delta}$ induced by ${\rm Res}_{\le G}^{(r)}$. Further, $\cok\beta_{\lle r}=H^{\lbe -1}_{\lbe *}\be(\Delta,H^{\le r}\be(\spp_{\et}, G\e))$, where
\begin{equation}\label{h1s}
H^{\lbe -1}_{\lbe *}\be(\Delta,H^{\le r}\be(\spp_{\et}, G\e))={}_{N}\be H^{\le r}\be(\spp_{\et}, G\e)/(1-\tau)H^{\le r}\be(\spp_{\et}, G\e)
\end{equation}
with ${}_{N}\be H^{\le r}\be(\spp_{\et}, G\e)=\krn\e {\rm Cores}_{\le G}^{(r)}$ \eqref{eas2}. The choice of notation above is motivated by the fact that
\eqref{h1s} is naturaly isomorphic to a subgroup of the (Tate) $\Delta$-cohomology group $H^{\lbe -1}\be(\Delta,H^{\le r}\be(\spp_{\et}, G\e))={}_{(1+\tau)}H^{\le r}\be(\spp_{\et}, G\e)/(1-\tau)H^{\le r}\be(\spp_{\et}, G\e)$, where ${}_{(1+\tau)}H^{\le r}\be(\spp_{\et}, G\e)$ is the kernel of the endomorphism $1+\tau\colon H^{\le r}\be(\spp_{\et}, G\e)\to H^{\le r}\be(\spp_{\et}, G\e)$. Indeed, an application of Lemma \ref{ker-cok} to the commutative diagram
\[
\xymatrix{H^{\le r}\be(\spp_{\et}, G\e)\ar[drr]_{1+\tau}\ar[rr]^{{\rm Cores}_{\e G}^{(r)}}&& H^{\le r}\be(S_{\et}, G\e)\ar[d]^{{\rm Res}_{\e G}^{(r)}}\\
&&H^{\le r}\be(\spp_{\et}, G\e)
}
\]
yields an exact sequence of $2$-torsion abelian groups
\[
0\to H^{\lbe -1}_{\lbe *}\be(\Delta,H^{\le r}\be(\spp_{\et}, G\e))\to H^{\lbe -1}\be(\Delta,H^{\le r}\be(\spp_{\et}, G\e))\to \krn {\rm Res}_{\e G}^{(r)}.
\]
Note that $H^{\lbe -1}_{\lbe *}\be(\Delta,H^{\le 0}\lbe(\spp_{\et}, G\e))=H^{\lbe -1}\be(\Delta,H^{\le 0}\lbe(\spp_{\et}, G\e))$. Now \eqref{ko2} yields the following statement.

\begin{theorem} \label{main} Let $(\e f,G\e)$ be an admissible quadratic Galois pair with  Galois group $\Delta$ (see Definition {\rm \ref{qadm}}). Then, for every integer $r\geq 0$, there exists a canonical exact sequence of abelian groups
\[
\begin{array}{rcl}
H^{\lle r}\be(S_{\et},G\e)\to H^{\le r}\be(\spp_{\et}, G\e)^{\Delta}&\to& \cok\e {\rm Cores}_{\e G}^{(r-1)}\overset{\!\be\gamma_{ r}}{\to}\krn {\rm Res}_{G}^{(r+1)}\\
&\to& H^{\lbe -1}_{\lbe *}\be(\Delta,H^{\le r}\be(\spp_{\et}, G\e))\to 0,
\end{array}
\]
where $\gamma_{r}$ is the map \eqref{gr2} and $H^{\lbe -1}_{\lbe *}\be(\Delta,H^{\le r}\be(\spp_{\et}, G\e))$ is the group \eqref{h1s}.
\end{theorem}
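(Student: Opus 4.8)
The plan is to obtain the asserted five--term sequence by splicing the explicit descriptions of $\krn\e\beta_{\lle r}$ and $\cok\e\beta_{\lle r}$, recorded in the discussion preceding the statement, into the four--term exact sequence \eqref{ko2} that Proposition \ref{ak} provides in the present quadratic Galois setting.

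First I would recall that, since $(\e f,G\e)$ is an admissible quadratic Galois pair, $G(2)=0$, so that Proposition \ref{ak} reduces to the exact sequence \eqref{ko2}
\[
0\to \krn\e\beta_{\lle r}\to \cok\e{\rm Cores}_{\e G}^{(r-1)}\overset{\gamma_{r}}{\to}\krn\e{\rm Res}_{\e G}^{(r+1)}\to \cok\e\beta_{\lle r}\to 0,
\]
with $\gamma_{r}$ the map \eqref{gr2}. I would then invoke the three facts already established above: that $\beta_{\lle r}$ \eqref{br} is induced by $1-\tau$ on $H^{\le r}\be(\spp_{\et},G\e)$, that $\cok\e\beta_{\lle r}$ is the group $H^{\lbe -1}_{\lbe *}\be(\Delta,H^{\le r}\be(\spp_{\et},G\e))$ of \eqref{h1s}, and that $\krn\e\beta_{\lle r}$ is the cokernel of the map $H^{\lle r}\be(S_{\et},G\e)\to H^{\le r}\be(\spp_{\et},G\e)^{\Delta}$ induced by ${\rm Res}_{\le G}^{(r)}$. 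Substituting the identification of $\cok\e\beta_{\lle r}$ into \eqref{ko2} gives at once exactness at $\krn\e{\rm Res}_{\e G}^{(r+1)}$ and at $H^{\lbe -1}_{\lbe *}\be(\Delta,H^{\le r}\be(\spp_{\et},G\e))$.

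It then remains only to prolong the sequence to the left of $\krn\e\beta_{\lle r}$. The description of $\krn\e\beta_{\lle r}$ as a cokernel furnishes the canonical exact sequence
\[
H^{\lle r}\be(S_{\et},G\e)\overset{{\rm Res}_{\le G}^{(r)}}{\lra} H^{\le r}\be(\spp_{\et},G\e)^{\Delta}\overset{p}{\twoheadrightarrow} \krn\e\beta_{\lle r}\to 0,
\]
with $p$ the natural projection. Composing $p$ with the injection $\krn\e\beta_{\lle r}\hookrightarrow\cok\e{\rm Cores}_{\e G}^{(r-1)}$ coming from \eqref{ko2} produces the map $H^{\le r}\be(\spp_{\et},G\e)^{\Delta}\to\cok\e{\rm Cores}_{\e G}^{(r-1)}$ of the asserted sequence, and splicing the two displays along $\krn\e\beta_{\lle r}$ yields the full five--term sequence.

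The one point demanding care is the exactness at the two leftmost joints. At $H^{\le r}\be(\spp_{\et},G\e)^{\Delta}$ I would argue that $\img\e{\rm Res}_{\le G}^{(r)}=\krn\e p$ by the cokernel presentation, and that $\krn\e p$ coincides with the kernel of $p$ followed by the injection into $\cok\e{\rm Cores}_{\e G}^{(r-1)}$ precisely because that injection is a monomorphism. At $\cok\e{\rm Cores}_{\e G}^{(r-1)}$ the image of $H^{\le r}\be(\spp_{\et},G\e)^{\Delta}$ equals the image of $\krn\e\beta_{\lle r}$ under the injection, namely $\krn\e\beta_{\lle r}$ itself, which by the exactness of \eqref{ko2} is $\krn\e\gamma_{r}$. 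I do not anticipate any genuine obstacle here: the substantive work, namely the description of $\beta_{\lle r}$ via $1-\tau$ and the computation of its kernel and cokernel, is already carried out before the statement, so the proof amounts to an assembly of the relevant exact sequences by diagram splicing.
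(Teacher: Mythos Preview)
Your proposal is correct and follows essentially the same route as the paper: the paper simply asserts that ``\eqref{ko2} yields the following statement,'' relying on the preceding identifications of $\krn\beta_{r}$ and $\cok\beta_{r}$, and your argument spells out precisely the splicing of the cokernel presentation of $\krn\beta_{r}$ into the left of \eqref{ko2} together with the identification $\cok\beta_{r}=H^{-1}_{*}(\Delta,H^{r}(\spp_{\et},G))$ on the right.
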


\begin{remarks}\label{hs}\indent
\begin{enumerate}
\item[(a)] It seems likely that the exact sequence of Theorem \ref{main} (=Theorem \ref{main0}) can also be derived from the Hochschild-Serre spectral sequence associated to $\Delta$ and $G$
\begin{equation}\label{hsss}
H^{s}\lbe(\Delta,H^{\le r}\lbe(\spp_{\lbe\et}, G\e))\implies H^{s+r}\lbe(S_{\et}, G\e),
\end{equation}
where the hypotheses $|\Delta|=2$ and $G(2)=0$, i.e., $R_{\e S^{\lle\prime}\!/\lbe S}^{\e(1)}\lbe(G\le)_{2}=G_{\lbe 2}$, should simplify the analysis of the various associated intermediate exact sequences and commutative diagrams. Regarding other statements in this paper that apply to possibly ramified (non Galois) coverings of arbitrary rank (such as Theorem \ref{th1}), they may also follow from a (careful) analysis of a suitable spectral sequence (e.g., that in \cite[I, Theorem 3.4.4(i), p.~58]{t}) but, as noted in the Introduction, we doubt that such an approach will be significantly simpler than the one adopted in this paper.

\item[(b)] Let $f\colon\spp\to S$ be a cyclic Galois covering of constant rank $n\geq 2$ with Galois group $\Delta$ such that the pair $(\le f,G\le)$ is admissible (see Definition \ref{adm}). Then, via the periodicity isomorphisms of Tate $\Delta$-cohomology groups $H^{\le i}\lbe(\Delta,-)\isoto H^{\le i+2}\lbe(\Delta,-)$ \cite[Theorem 5, p.~108]{aw} and the equality $\cok\e {\rm Cores}_{\e G}^{(0)}=H^{\le 0}(\Delta,G(\spp\le))$, the exact sequence of terms of low degree \cite[p.~309, line 8]{mi1} associated to the Hochschild-Serre spectral sequence \eqref{hsss} corresponds to an exact sequence
\begin{equation}\label{ldg}
\begin{array}{rcl}
0\to H^{\le -1}\be(\Delta,G(\spp\le))&\to& H^{\lle 1}\be(S_{\et},G\e)\to H^{\le 1}\be(\spp_{\et}, G\e)^{\Delta}\to \cok\e {\rm Cores}_{\e G}^{(0)}\\
&\to&\krn {\rm Res}_{G}^{(2)}\to H^{\le -1}\be(\Delta,H^{\le 1}\be(\spp_{\et}, G\e))\to H^{\le -1}\be(\Delta,G(\spp\le)).
\end{array}
\end{equation}
Now, if $N_{\be\Delta}=\sum_{\e\sigma\in\Delta}\sigma$ is the norm element of $\Z[\e\Delta\e]$, then the following diagram commutes
\[
\xymatrix{H^{\le 1}\be(\spp_{\et}, G\e)\ar[drr]_{N_{\be\Delta}}\ar[rr]^{{\rm Cores}_{\e G}^{(1)}}&& H^{\le 1}\be(S_{\et}, G\e)\ar[d]^{{\rm Res}_{\e G}^{(1)}}\\
&&H^{\le 1}\be(\spp_{\et}, G\e),
}
\]
whence ${\rm Cores}_{\e G}^{(1)}$ induces a map $H^{\le -1}\be(\Delta,H^{\le 1}\be(\spp_{\et}, G\e))\to \krn\le{\rm Res}_{\le G}^{(1)}$. In view of \cite[Proposition 4.2]{bea}, it seems likely that the preceding map is the composition
\begin{equation}\label{iso}
H^{\le -1}\be(\Delta,H^{\le 1}\be(\spp_{\et}, G\e))\to H^{\le -1}\be(\Delta,G(\spp\le))\isoto \krn\le{\rm Res}_{\le G}^{(1)},
\end{equation}
where the first arrow is the last map in \eqref{ldg} and the second arrow is the isomorphism induced by the first nontrivial map in \eqref{ldg}. If this is the case, and $H^{\lbe -1}_{\lbe *}\be(\Delta,H^{\le 1}\be(\spp_{\et}, G\e))$ denotes ${}_{N}\be H^{\le 1}\be(\spp_{\et}, G\e)/I_{\lbe \Delta}H^{\le 1}\be(\spp_{\et}, G\e)$, where $I_{\lbe \Delta}$ is the augmentation ideal of $\Z[\Delta]$, then \eqref{ldg} induces an exact sequence of abelian groups
\[
\begin{array}{rcl}
H^{\lle 1}\be(S_{\et},G\e)\to H^{\le 1}\be(\spp_{\et}, G\e)^{\Delta}&\to& \cok\e {\rm Cores}_{\e G}^{(0)}\to\krn {\rm Res}_{G}^{(2)}\\
&\to& H^{\lbe -1}_{\lbe *}\be(\Delta,H^{\le 1}\be(\spp_{\et}, G\e))\to 0
\end{array}
\]
which might well be regarded as a generalization (to arbitrary cyclic Galois coverings) of the exact sequence of Theorem \ref{main} for $r=1$.
\end{enumerate} 
\end{remarks}

\section{Arithmetical applications}\label{arith}

Recall that a global field is either a number field, i.e., a finite extension of $\Q$, or a global function field, i.e., the function field of a smooth, projective and irreducible algebraic curve over a finite field. Let $K\be/\be F$ be a quadratic Galois extension of global fields with Galois group $\Delta$, let $\si$ be a nonempty finite set of primes of $F$ containing the archimedean primes and the non-archimedean primes that ramify in $K$ and let $\si_{K}$ denote the set of primes of $K$ lying above the primes in $\si$. If $v\in\si$ and there exists a prime $w$ of $K$ lying above $v$ such that $[K_{w}\colon\! F_{\be v}\le]=2$, where $K_{w}$ and $F_{\be v}$ denote the completions of the indicated fields at the indicated primes, then $w$ is the unique prime of $K$ lying above $v$ (for the archimedean case, see \cite[Remark 1.2.1, p.~10]{gras}). Set
\begin{equation}\label{sr}
\si^{\lle\prime}=\{ v\in\si\colon [K_{w}\colon\! F_{\be v}\le]=2\text{ for some (unique) }w|v \}
\end{equation}
and
\begin{equation}\label{sr2}
\rho={\rm max}\e\{0,\#\si^{\le\prime}\!-1\}.
\end{equation}
We will write $\si^{\lle\prime}_{\le\rm real}$ for the set of real primes of $F$ that lie in $\si^{\lle\prime}$, i.e., $\si^{\lle\prime}_{\le\rm real}$ is the set of real primes of $F$ that ramify in $K$. For every $v\in \si^{\lle\prime}$, we set $\Delta_{\e v}={\rm Gal}(K_{w}\lbe/\lbe F_{\be v})$, where $w$ is the unique prime of $K$ lying above $v$. We now let $\ofs$ and $\oks$ denote, respectively, the rings of $\si\le$-\le integers of $F$ and $\si_{K}\le$-\le integers of $K$. If $\spp=\spec\oks,S=\spec \ofs$ and $f\colon \spp\to S$ is the canonical morphism induced by the inclusion $\ofs\subset\oks$, then $f$ is a quadratic Galois covering of noetherian schemes.

\subsection{Ideal class groups} \label{une} 
Clearly, if $f\colon \spp\to S$ is as above, then $(\e f,\bg_{m,S}\lbe)$ is an admissible quadratic Galois pair (see Definition \ref{qadm}). The map $\gamma_{\le 1}$ \eqref{gr2} is a map
\begin{equation}\label{key}
\gamma_{\le 1}\colon \ofss\le/\be N_{\be K\be/\lbe F}\e\okss\to \br(\be\oks\lbe/\ofs\lbe),
\end{equation}
where $\br(\be\oks\lbe/\ofs\lbe)=\krn[\e\br f\colon \br\ofs \to \br\oks\e]$ is the relative Brauer group of $\spp\lbe/S\,$\footnote{ In the present setting the cohomological Brauer groups and the Brauer groups of equivalence of Azumaya algebras agree. See Remark \ref{brr}(a) below.}\e.

Now, by \cite[II, Proposition 2.1, p.~163]{adt} and \cite[Proposition 48(3), p.~159]{sha}, there exists a canonical exact and commutativite diagram of abelian groups
\begin{equation}\label{key1}
\xymatrix{
0\ar[r]& \ar[d]_{\br\lbe f}\br\e\ofs\ar[r]&\ar[d]^{\oplus\oplus\e{\rm Res}_{\lbe K_{\lbe w}\lbe /\lbe F_{\lbe v}}}\displaystyle{\bigoplus_{v\in\si}}\,\br\e F_{v}\ar[rr]^{ \sum{\rm inv}_{\lbe v}}&&\Q/\Z\ar[d]^{2}\\
0\ar[r]&\br\,\oks\ar[r]&\displaystyle{\bigoplus_{v\in\si}}\displaystyle{\bigoplus_{w\vert v}}\,\br\, K_{w}\ar[rr]^(.6){\sum \sum{\rm inv}_{\lbe w}}&&\Q/\Z.
}
\end{equation}
The above diagram yields an isomorphism of abelian groups
\begin{equation}\label{bis}
\begin{array}{rcl}
\br(\lbe\oks/\ofs)&\isoto&\krn\!\be\left[\,\displaystyle{\bigoplus_{v\in\si}}\e\br(\be K_{\lbe w}\lbe/\be F_{\be v})\overset{\!\Sigma\e{\rm inv}_{\lbe\lbe v}}{\lra}\Z/\lbe 2\le\Z\right]\\\\
&=&\krn\!\be\left[\,\displaystyle{\bigoplus_{v\in\si^{\le\prime}}}\e\br(\be K_{\lbe w}\lbe/\be F_{\be v})\overset{\!\Sigma\e{\rm inv}_{\lbe\lbe v}}{\lra}\Z/\lbe 2\le\Z\right],
\end{array}
\end{equation}
where $\si^{\le\prime}$ is the set \eqref{sr} and the map labeled $\Sigma\e{\rm inv}_{\lbe\lbe v}$ above is induced by $\sum{\rm inv}_{\lbe v}\colon \bigoplus_{v\in\si}\br\e F_{v}\to\Q/\Z$. Next we observe that, for every $v\in \si^{\le\prime}$, there exist canonical isomorphisms of abelian groups
\begin{equation}\label{bis2}
F_{\! v}^{\e *}\be/\be N_{\be K_{\lbe w}\lbe/\lbe F_{\be v}}K_{\lbe w}^{*}=H^{\le 0}\lbe(\lbe \Delta_{\e v},K_{\lbe w}^{*})\isoto H^{\le 2}\lbe(\lbe \Delta_{\e v},K_{\lbe w}^{*})\isoto \br(\be K_{\lbe w}\lbe/\be F_{\be v})\isoto \Z/\lbe 2\le\Z.
\end{equation}
See \cite[Proposition 5, p.~101; Theorem 5, p.~108]{aw} and \cite[Theorem 37, p.~155]{sha}. The first (respectively, second, third) isomorphism in \eqref{bis2} is induced by cup-product with the nontrivial element of $H^{\le 2}\lbe(\lbe \Delta_{\e v},\Z\le)$ (respectively, inflation map, invariant map ${\rm inv}_{\lbe\lbe v}$) and \eqref{bis} shows that $\br(\lbe\oks/\ofs)$ is canonically isomorphic to the kernel of the summation map $\bigoplus_{v\in\si^{\le\prime}}\be\Z/\lbe 2\le\Z\to\Z/\lbe 2\le\Z$. Thus $\br(\lbe\oks/\ofs)$ is a finite $2$-torsion abelian group of order
\begin{equation}\label{ord}
|\br(\lbe\oks/\ofs)|=2^{\le \rho},
\end{equation}
where $\rho\geq 0$ is the integer \eqref{sr2}. Next, it follows from the description of the map $\gamma_{\le 1}$ \eqref{key} (i.e., \eqref{gr2}) and the compatibility of the connecting homomorphisms  $\delta^{(\lle 0\lle)}$ \eqref{lse2} and $\partial^{(\lle 1\lle)}$ \eqref{lse1} with the left-hand horizontal maps in \eqref{key1}, the local cup-product, inflation map and invariant maps ${\rm inv}_{\lbe\lbe v}$ that the following diagram of abelian groups commutes
\begin{equation}\label{key2}
\xymatrix{\ofss/\lbe N_{\be K\be/\lbe F}\e\okss\ar[d]_{\gamma_{\le 1}}\ar[r]^(.33){\lambda}&\krn\!\be\left[\,\displaystyle{\bigoplus_{v\in\si}}\e F_{\! v}^{\e *}\be/\be N_{\be K_{\lbe w}\lbe/\lbe F_{\lbe v}}K_{\lbe w}^{*}\overset{\!\Sigma\e{\rm ord}_{v}}{\lra}\Z/\lbe 2\le\Z \right]\ar[d]^{\sim}\\
\br(\lbe\okss/\ofs)\ar[ur]^(.4){\sim}\ar[r]^(.34){\sim}&\krn\!\be\left[\,\displaystyle{\bigoplus_{v\in\si}}\e\br(\be K_{\lbe w}\lbe/\be F_{\be v})\overset{\!\Sigma\e{\rm inv}_{\lbe\lbe v}}{\lra}\Z/\lbe 2\le\Z \right],
}
\end{equation}
where $\gamma_{\le 1}$ is the map \eqref{key}, $\lambda$ is the canonical localization map, the right-hand vertical isomorphism comes from \eqref{bis2}, the bottom horizontal arrow is the map \eqref{bis} and the diagonal map is defined by the commutativity of the lower triangle. We conclude that
\begin{equation}\label{kg}
\krn \gamma_{\le 1}=\krn \lambda=W_{\!F,\e \si}/\lbe N_{\be K\be/\lbe F}\e\okss,
\end{equation}
where 
\begin{equation}\label{kg3}
W_{\!F,\e \si}=\{\varepsilon\in \ofss\colon \varepsilon\in N_{\be K_{\lbe w}\lbe/\lbe F_{\lbe v}}K_{\lbe w}^{*}\text{ for every $v\in\si\lle$}\}
\end{equation}
and the diagonal map in \eqref{key2} induces an isomorphism of abelian groups
\begin{equation}\label{kg4}
\cok\gamma_{\le 1}\isoto \cok \lambda.
\end{equation}
We now observe that $\img\le\lambda\simeq \ofss/\le W_{\!F,\e \si}$ is a finite $2$-torsion abelian group of order
\begin{equation}\label{isol}
2^{\le e}=[\ofss\colon W_{\!F,\e \si}],
\end{equation}
where $0\leq e\leq \rho$. Thus, by \eqref{ord} and \eqref{key2}, $\cok \lambda$ is finite $2$-torsion abelian group of order
\begin{equation}\label{isol2}
|\cok \lambda|=2^{\e \rho\le -\le e}.
\end{equation}

Now let $C_{F,\e\si}$ and $C_{\be K,\e\si}$ denote, respectively, the $\si$-ideal class group of $F$ and the $\si_{K}$-ideal class group of $K$, and recall the classical {\it $\si$-capitulation map}
\begin{equation}\label{cap}
j_{ K\be/\lbe F,\e\si}\colon C_{F,\e\si}\to C_{\lbe K,\e\si},
\end{equation}
i.e., the map induced by extending ideals from $F$ to $K$. The {\it relative $\si$-ideal class group of $K\!/\be F$} is the group
\begin{equation}\label{rnm}
C_{K/F,\e\si}=\krn\!\be\left[C_{\lbe K,\e\si}\overset{\!N_{\lbe K\!/\be F}}{\lra} C_{F,\e\si}\right],
\end{equation}
where $N_{\be K\be/\lbe F}$ is induced by taking $K\be/\be F$-norms of ideals in $\oks$.
The identifications $C_{F,\e\si}=H^{\le 1}(S_{\et},\bg_{m})$ and $C_{\lbe K,\e\si}=H^{\le 1}(\spp_{\et},\bg_{m})$ induce identifications
\begin{eqnarray*}
\cok\e j_{ K\be/\lbe F,\e\si}&=&\cok\e{\rm Res}_{\e \bg_{m,\lbe S}}^{(1)}\\
C_{K/F,\e\si}&=&\krn\e{\rm Cores}_{\e \bg_{m,\lbe S}}^{(1)}.
\end{eqnarray*}
Under the isomorphisms \eqref{kg} and \eqref{kg4}, the formula \eqref{isol2} and the preceding identifications, Theorem \ref{boo} for $r=1$ yields the following statement.

\begin{theorem}\label{nice} Let $K\be/\be F$ be a quadratic Galois extension of global fields and let $\si$ be a nonempty finite set of primes of $F$ containing the archimedean primes and the non-archimedean primes that ramify in $K$. Then there exists a canonical exact sequence of abelian groups
\[
0\to W_{\!F,\e \si}/\lbe N_{\be K\be/\lbe F}\e\okss\to \cok\e j_{ K\be/\lbe F,\e\si}\to C_{K/F,\e\si}\to \cok \lambda\to 0,
\]
where $W_{\!F,\e \si}\subseteq\ofss$ is given by \eqref{kg3}, $j_{ K\be/\lbe F,\e\si}$ is the $\si$-capitulation map \eqref{cap}, $C_{K/F,\e\si}$ is the relative $\si$-ideal class group of $K\!/\be F$ and
\[
\lambda\colon\ofss/\lbe N_{\be K\be/\lbe F}\e\okss\to\krn\!\be\left[\,\displaystyle{\bigoplus_{v\in\si}}\e F_{\! v}^{\e *}\be/\be N_{\be K_{\lbe w}\lbe/\lbe F_{\lbe v}}K_{\lbe w}^{*}\overset{\!\Sigma\e{\rm ord}_{v}}{\lra}\Z/\lbe 2\le\Z \right]
\]
is the canonical localization map. In particular, the number of ideal classes in $C_{\lbe K,\e\si}$ that do not lie in the image of $j_{ K\be/\lbe F,\e\si}$ is at least $[\e W_{\!F,\e \si}\colon\! N_{\be K\be/\lbe F}\e\okss\e]$ and the relative $\si$-ideal class number $|C_{K/F,\e\si}|$ is divisible by $2^{\e \rho\le -\le e}$, where the integers  $\rho$ and $e$ are given by \eqref{sr2} and \eqref{isol}, respectively.
\end{theorem}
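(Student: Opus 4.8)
The plan is to derive Theorem~\ref{nice} by specializing the exact sequence of Theorem~\ref{boo} to the pair $(\e f,\bg_{m,S}\e)$ at $r=1$ and then rewriting each of its four terms arithmetically by means of the identifications already assembled above the statement.

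First I would record that $(\e f,\bg_{m,S}\e)$ is an admissible quadratic Galois pair in the sense of Definition~\ref{qadm}: $f$ is a quadratic Galois covering of noetherian schemes, $\bg_{m,S}$ is smooth, commutative and quasi-projective with connected (one-dimensional torus) fibers, and condition (iii) holds since $\bg_{m,k(s)}$ is semiabelian for every residue field. Theorem~\ref{boo} with $r=1$ then supplies the canonical exact sequence
\[
0\to \krn\e\gamma_{\le 1}\to \cok\e {\rm Res}_{\e \bg_{m,\lbe S}}^{(1)}\overset{\!\be\beta_{1}}{\to}\krn\e {\rm Cores}_{\e \bg_{m,\lbe S}}^{(1)}\to \cok\e \gamma_{\le 1}\to 0,
\]
where $\gamma_{\le 1}$ is the map \eqref{gr2}, explicitly \eqref{key}.

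Next I would translate the two inner terms. Under the identifications $H^{\le 1}(S_{\et},\bg_{m})=C_{F,\e\si}$ and $H^{\le 1}(\spp_{\et},\bg_{m})=C_{\lbe K,\e\si}$, the map ${\rm Res}_{\e \bg_{m,\lbe S}}^{(1)}$ \eqref{res} is extension of ideals, i.e.\ the $\si$-capitulation map $j_{ K\be/\lbe F,\e\si}$ \eqref{cap}, while ${\rm Cores}_{\e \bg_{m,\lbe S}}^{(1)}$ \eqref{cores} is the norm map on ideal classes; hence $\cok\e {\rm Res}_{\e \bg_{m,\lbe S}}^{(1)}=\cok\e j_{ K\be/\lbe F,\e\si}$ and $\krn\e {\rm Cores}_{\e \bg_{m,\lbe S}}^{(1)}=C_{K/F,\e\si}$ \eqref{rnm}. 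For the two outer terms I would invoke the commutative diagram \eqref{key2}: its lower edge identifies $\krn {\rm Res}_{\e \bg_{m,\lbe S}}^{(2)}=\br(\lbe\oks/\ofs)$ with the kernel of $\Sigma\e{\rm inv}_{\lbe\lbe v}$ through \eqref{bis}, and its right edge matches $\gamma_{\le 1}$ with the localization map $\lambda$ through the cup-product/inflation/invariant isomorphisms \eqref{bis2}. Commutativity then gives $\krn\e\gamma_{\le 1}=\krn\lambda=W_{\!F,\e \si}/\lbe N_{\be K\be/\lbe F}\e\okss$ by \eqref{kg} and $\cok\e\gamma_{\le 1}\cong\cok\lambda$ by \eqref{kg4}. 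Substituting the four identifications into the displayed sequence yields the asserted exact sequence.

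The quantitative tail is then pure bookkeeping. The leftmost injection exhibits $W_{\!F,\e \si}/\lbe N_{\be K\be/\lbe F}\e\okss$ inside $\cok\e j_{ K\be/\lbe F,\e\si}$, so at least $[\e W_{\!F,\e \si}\colon\! N_{\be K\be/\lbe F}\e\okss\e]$ classes of $C_{\lbe K,\e\si}$ avoid the image of $j_{ K\be/\lbe F,\e\si}$; and the rightmost surjection $C_{K/F,\e\si}\twoheadrightarrow\cok\lambda$ combined with $|\cok\lambda|=2^{\e \rho\le -\le e}$ \eqref{isol2} forces $2^{\e \rho\le -\le e}$ to divide $|C_{K/F,\e\si}|$. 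I expect the one genuinely substantive point to be the identification of the abstractly defined composition $\gamma_{\le 1}$ with the concrete localization map $\lambda$, i.e.\ the commutativity of \eqref{key2}; this rests on the compatibility of the connecting homomorphisms $\delta^{(0)}$ \eqref{lse2} and $\partial^{(1)}$ \eqref{lse1} with the local--global Brauer diagram \eqref{key1}, and once it is in hand everything else is a matter of substitution and counting.
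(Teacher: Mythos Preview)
Your proposal is correct and follows essentially the same route as the paper: specialize Theorem~\ref{boo} at $r=1$ to the pair $(\e f,\bg_{m,S}\e)$, then substitute the identifications $\cok\e {\rm Res}_{\e \bg_{m,\lbe S}}^{(1)}=\cok\e j_{ K\be/\lbe F,\e\si}$, $\krn\e {\rm Cores}_{\e \bg_{m,\lbe S}}^{(1)}=C_{K/F,\e\si}$, $\krn\gamma_{1}=W_{\!F,\e \si}/\lbe N_{\be K\be/\lbe F}\e\okss$ \eqref{kg} and $\cok\gamma_{1}\simeq\cok\lambda$ \eqref{kg4}, with \eqref{isol2} handling the divisibility claim. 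Your observation that the only substantive ingredient is the commutativity of \eqref{key2} is exactly right, and the paper treats that point in the discussion preceding the theorem rather than in the proof itself.
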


\smallskip

\begin{remark} Let $j_{ K\be/\lbe F,\e\si}^{\Delta}\colon C_{F,\e\si}\to C_{\lbe K,\e\si}^{\Delta}$ be the $\Delta$-invariant capitulation map. Then \eqref{kg}, \eqref{kg4} and Theorem \ref{main} for $r=1$ show that there exist canonical isomorphisms of finite $2$-torsion abelian groups
\[
\begin{array}{rcl}
\cok j_{ K\be/\lbe F,\e\si}^{\Delta} &\simeq&W_{\!F,\e \si}/\lbe N_{\be K\be/\lbe F}\e\okss\\
H^{\lbe -1}\be(\Delta,C_{\lbe K,\e\si})&\simeq&\cok\lambda.
\end{array}
\]
Via periodicity \cite[Theorem 5, p.~108]{aw}, the first isomorphism above is, in fact, a particular case of \cite[Theorem 7.1]{ga07}.
\end{remark}

\subsection{N\'eron-Raynaud class groups of tori} \label{una} If $T$ is an algebraic torus over $F$, the {\it N\'eron-Raynaud $\Sigma$-class group of $T$} is the quotient
\begin{equation}\label{clgp}
C_{\e T\lbe,\e F,\e \Sigma}=\mathcal T^{\e 0}\lbe(\mathbb A_{S})/\e T(F\le)\e \mathcal T^{\e 0}\lbe(\lbe\mathbb A_{S}\lbe (S\e)),
\end{equation}
where $\mathbb A_{S}$ (respectively, $\mathbb A_{S}\lbe (S\le)$) is the ring of adeles (respectively, integral adeles) of $S$ and $\mathcal T^{\e 0}$ is the identity component of the N\'eron-Raynaud model of $T$ over $S$. The above group is finite \cite[\S1.3]{c} and coincides with the $\Sigma$-ideal class group of $F$ discussed in the previous subsection when $T=\bg_{m,\le F}$. We will write $C_{\le T\lbe,\e K,\e \Sigma}$ for the 
N\'eron-Raynaud $\Sigma_{K}$-class group of $T_{\be K}$. The {\it $\Delta$-invariant  capitulation map} $j_{\le T,\e K/F,\e\si}^{\Delta}\colon 
C_{\e T\lbe,\e F,\e \Sigma}\to C_{\le T\lbe,\e K,\e \Sigma}^{\le\Delta}$ was discussed in \cite[\S\S 4 and 5]{ga10} for arbitrary $F$-tori $T$ and arbitrary finite Galois extensions $K/F$ with Galois group $\Delta$. In this subsection we apply Theorem \ref{main} to obtain additional information on the above map for certain types of $F$-tori in the current setting, i.e., $K\lbe/\lbe F$ is a quadratic Galois extension. By \cite[(3.10)]{ga12}, there exists a canonical exact sequence of abelian groups 
\begin{equation}\label{seq1}
0\to C_{\e T\lbe,\e F,\e \Sigma}\to H^{1}(S_{\et}, \mathcal T^{\e 0}\e)\to \sha^{1}_{\Sigma}(F,T\e)\to 0,
\end{equation}
where $\sha^{1}_{\Sigma}(F,T\e)=\krn\!\be\left[H^{1}\be(F,T\e)\!\to\! \prod_{v\notin\Sigma}\!H^{1}\be(F_{v},T\e)\right]$ is the $\Sigma$-Tate-Shafarevich group of $T$ over $F$. We now assume that $T$ is an {\it invertible $F$-torus}, i.e., $T$ is isomorphic to a direct factor of an $F$-torus of the form $R_{\le E\lbe/\lbe F}(\bg_{m,\le E})$, where $E/F$ is a finite \'etale $F$-algebra. Then $H^{1}\be(F,T\e)=0$ by \cite[Lemma 4.8(a)]{ga10} and \eqref{seq1} yields an isomorphism of finite abelian groups
\begin{equation}\label{iso2}
C_{\e T\lbe,\e F,\e \Sigma}\isoto H^{1}(S_{\et}, \mathcal T^{\e 0}\e)
\end{equation}
Similarly, $T_{K}$ is an invertible $K$-torus and
$\mathcal T^{\e 0}_{S^{\prime}}$ is the identity component of the N\'eron-Raynaud model of $T_{K}$ over $\spp$ by \cite[\S7.2, Theorem 1(ii), p.~176]{blr} and \cite[${\rm VI_B}$, Proposition 3.3]{sga3}, whence the sequence \eqref{seq1} over $K$ yields an isomorphism of finite abelian groups $C_{\le T\lbe,\e K,\e \Sigma}\isoto H^{\le 1}\be(\spp_{\et}, \mathcal T^{\e 0}\lle)$.

\begin{theorem} Let $K\be/\be F$ be a quadratic Galois extension of global fields with Galois group $\Delta=\{1,\tau\}$, let $\si$ be a nonempty finite set of primes of $F$ containing the archimedean primes and the non-archimedean primes that ramify in $K$ and set $S=\spec \mathcal O_{\lbe F,\e \Sigma}$ and $\spp =\mathcal O_{\lbe K,\e \Sigma_{K}}$. If $T$ is an invertible $F$-torus such that $\mathcal T^{\e 0}_{\be k(s)}$ is a $k(s)$-torus for every point $s\in S$ with ${\rm char}\, k(s)=2$, where $\mathcal T^{\e 0}$ is the identity component of the N\'eron-Raynaud model of $T$ over $S$, then there exists a canonical exact sequence of abelian groups
\[
\begin{array}{rcl}	
0&\to& H^{-1}\lbe(\Delta, \mathcal T^{\e 0}(\spp\e))\to C_{\e T\lbe,\e F,\e \Sigma}\to C_{\le T\lbe,\e K,\e \Sigma}^{\le\Delta}\to \mathcal T^{\e 0}(S\e)/N_{\be \spp\lbe/\lbe S}\mathcal T^{\e 0}(\spp\le)\\\\
&\to&\krn[H^{2}(S_{\et},\mathcal T^{\e 0})\to H^{2}(\spp_{\et},\mathcal T^{\e 0}\e)]\to {}_{N}C_{\le T\lbe,\e K,\e \Sigma}/(1\!-\!\tau)C_{\le T\lbe,\e K,\e \Sigma}\to 0
\end{array}
\]
\end{theorem}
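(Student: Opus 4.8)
The strategy is to apply Theorem~\ref{main} to the pair $(\e f,\mathcal{T}^{0})$ in the two cases $r=1$ and $r=0$ and then to translate the resulting cohomology groups into the arithmetic objects in the statement. First I would verify that $(\e f,\mathcal{T}^{0})$ is an admissible quadratic Galois pair in the sense of Definition~\ref{qadm}. Indeed $f\colon\spp\to S$ is a quadratic Galois covering, as established in the introductory paragraphs of this section; $\mathcal{T}^{0}$ is smooth, commutative and has connected fibers, being the identity component of a N\'eron--Raynaud model of a torus, and it is quasi-projective over the one-dimensional base $S$ by the standard results on quasi-projectivity invoked in Section~\ref{pn1}; and condition~(iii) holds because the hypothesis forces $\mathcal{T}^{0}_{k(s)}$ to be a $k(s)$-torus, hence a semiabelian $k(s)$-variety, at every point $s$ with $\mathrm{char}\,k(s)=2$.

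With $G=\mathcal{T}^{0}$, the exact sequence of Theorem~\ref{main} for $r=1$ begins with the map $H^{1}(S_{\et},\mathcal{T}^{0})\to H^{1}(\spp_{\et},\mathcal{T}^{0})^{\Delta}$ induced by ${\rm Res}_{\mathcal{T}^{0}}^{(1)}$ and ends with the surjection onto $H^{-1}_{*}(\Delta,H^{1}(\spp_{\et},\mathcal{T}^{0}))$. To produce the left-exact part of the claimed sequence I must identify the kernel of that first map. Since restriction factors through the $\Delta$-invariants, this kernel is exactly $\krn\,{\rm Res}_{\mathcal{T}^{0}}^{(1)}$, and I would compute it by invoking Theorem~\ref{main} a second time with $r=0$. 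There $\cok\,{\rm Cores}_{\mathcal{T}^{0}}^{(-1)}=0$, while the initial map $\mathcal{T}^{0}(S)\to\mathcal{T}^{0}(\spp)^{\Delta}$ is the Galois-descent isomorphism for sections; consequently the $r=0$ sequence degenerates into a canonical isomorphism $\krn\,{\rm Res}_{\mathcal{T}^{0}}^{(1)}\isoto H^{-1}_{*}(\Delta,\mathcal{T}^{0}(\spp))=H^{-1}(\Delta,\mathcal{T}^{0}(\spp))$, the last equality being the one recorded just before Theorem~\ref{main}. Prepending this identification to the $r=1$ sequence yields the required six-term exact sequence.

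It remains to rewrite the cohomology groups in arithmetic terms. Using the invertibility of $T$ (and of $T_{K}$) together with the isomorphism \eqref{iso2} and its analogue over $K$ --- both coming from \eqref{seq1} and the fact, already noted in this subsection, that $\mathcal{T}^{0}_{\spp}$ is the identity component of the N\'eron--Raynaud model of $T_{K}$ over $\spp$ --- I would identify $H^{1}(S_{\et},\mathcal{T}^{0})=C_{T,F,\Sigma}$ and, $\Delta$-equivariantly, $H^{1}(\spp_{\et},\mathcal{T}^{0})=C_{T,K,\Sigma}$. Hence the invariants become $C_{T,K,\Sigma}^{\Delta}$ and the final subquotient becomes ${}_{N}C_{T,K,\Sigma}/(1-\tau)C_{T,K,\Sigma}$. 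Finally, $\cok\,{\rm Cores}_{\mathcal{T}^{0}}^{(0)}=\mathcal{T}^{0}(S)/N_{\spp/S}\mathcal{T}^{0}(\spp)$, since ${\rm Cores}_{\mathcal{T}^{0}}^{(0)}$ is the norm map on sections, and $\krn\,{\rm Res}_{\mathcal{T}^{0}}^{(2)}$ is by definition $\krn[\,H^{2}(S_{\et},\mathcal{T}^{0})\to H^{2}(\spp_{\et},\mathcal{T}^{0})\,]$, which completes the translation.

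I expect the main obstacle to lie not in any single hard computation but in the bookkeeping around the left-hand extension: one must confirm that replacing the target $H^{1}(\spp_{\et},\mathcal{T}^{0})$ by its $\Delta$-invariants does not change the kernel of the first map, and that the object $\krn\,{\rm Res}_{\mathcal{T}^{0}}^{(1)}$ furnished by the $r=0$ sequence is canonically the very kernel appearing in the $r=1$ sequence, so that the two applications of Theorem~\ref{main} splice compatibly. The admissibility check, especially the quasi-projectivity of $\mathcal{T}^{0}$ over $S$, is routine over a one-dimensional base but must be cited with care.
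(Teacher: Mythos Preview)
Your proof is correct and follows the paper's own argument closely: verify that $(f,\mathcal T^{\e 0})$ is an admissible quadratic Galois pair, apply Theorem~\ref{main} with $r=1$, and translate via the isomorphism \eqref{iso2} over $F$ and over $K$.

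The one point of divergence is how you identify the leftmost term $\krn\,{\rm Res}_{\mathcal T^{0}}^{(1)}$ with $H^{-1}(\Delta,\mathcal T^{\e 0}(\spp))$. The paper cites ``the isomorphism in \eqref{iso}'', which is extracted from the low-degree exact sequence \eqref{ldg} of the Hochschild--Serre spectral sequence. You instead obtain this identification by a second application of Theorem~\ref{main}, now with $r=0$: since $\cok\,{\rm Cores}^{(-1)}=0$ and $G(S)\to G(\spp)^{\Delta}$ is the Galois-descent isomorphism, the $r=0$ sequence collapses to $\krn\,{\rm Res}^{(1)}\isoto H^{-1}_{*}(\Delta,G(\spp))=H^{-1}(\Delta,G(\spp))$. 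Your route is arguably more in keeping with the paper's stated aim of avoiding spectral sequences, and it keeps the argument entirely internal to the machinery of Sections~\ref{rc}--\ref{quad}; the paper's route is shorter because the needed isomorphism has already been recorded in Remark~\ref{hs}(b). Either way the splice is canonical, so your concern in the final paragraph is not a genuine obstacle.
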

\begin{proof} The hypotheses imply that $(\le f,\mathcal T^{\e 0}\le)$ is an admissible quadratic Galois pair (see Definition \ref{qadm}). The theorem then follows by applying Theorem \ref{main} with $r=1$  and $(\le f, G\le)=(\le f,\mathcal T^{\e 0}\le)$ using the definition \eqref{h1s}, the isomorphism in \eqref{iso} and the isomorphisms \eqref{iso2} over $F$ and over $K$.
\end{proof}

\subsection{Abelian schemes} In order to be able to appeal to the results in \cite[II, \S5]{adt}, in this subsection we assume that $2$ is invertible on $S$, i.e., $\si$ contains all primes $v$ of $F$ such that ${\rm char}\e k(v)=2$ (in particular ${\rm char}\e F\neq 2$ since $\si$ is finite). Let $\mathcal A$ be an abelian scheme over $S$ with generic fiber $A$. We will write $A^{t}$ for the dual abelian variety of $A$. If $v$ is a real prime of $F$, let $\pi_{0}(A^{\lbe t}\lbe(F_{\be v}))$ be the group of connected components of $A^{\lbe t}\lbe(F_{\be v})$ and set
\begin{equation}\label{pio}
\pi_{0}(\lbe A(F_{\be v}))^{\le\prime}={\rm Hom}(\pi_{0}(\lbe A^{\lbe t}\lbe(F_{\be v})),\Z/2\le).
\end{equation}
Note that, since $\mathcal A$ is a N\'eron model of $A$ over $S$ \cite[\S1.2, Proposition 8, p.~15]{blr}, we have $\mathcal A(S\e)=A(F\e)$. Now let  $\sha^{1}_{\Sigma}(F,A\e)\!=\krn\!\be\left[H^{1}\be(F,A\e)\be\to\be \prod_{v\notin\Sigma}\be H^{1}\be(F_{\be v},A\e)\right]$ be the $\Sigma$-Tate-Shafarevich group of $A$ over $F$. By \cite[II, Proposition 5.1(a), p.~197, and sequence (5.5.1), p.~ 201]{adt}, the canonical map $H^{\le 1}\be(S_{\et},\mathcal A)\to H^{1}(F,A)$ induces an isomorphism of torsion abelian groups
\begin{equation}\label{tor1}
H^{\le 1}\be(S_{\et},\mathcal A)\isoto\sha^{1}_{\Sigma}(F,A\e).
\end{equation}
Similarly, $H^{1}(\spp_{\et},\mathcal A)\isoto \sha^{1}_{\Sigma}(K,A\e)$, where $\sha^{1}_{\Sigma}(K,A\e)$ is the $\Sigma_{K}$-Tate-Shafarevich group of $A_{K}=A\times_{\lbe F} K$ over $K$.
In \cite[\S6]{ga18b} we discussed the group
\[
\begin{array}{rcl}
\krn[\e\sha^{1}_{\Sigma}(F,A\le)\to \sha^{1}_{\Sigma}(K,A\le)\e]&\simeq&\krn[ H^{1}(S,\mathcal A)\overset{\!{\rm Res}_{\!\mathcal A}^{(1)}}{\lra} H^{1}(\spp_{\et},\mathcal A\e)]\\
&\simeq&H^{-1}(\Delta, A(K))
\end{array}
\]
(see \eqref{ldg} for the second isomorphism). In this subsection we discuss the kernel and cokernel of the map ${\rm Res}_{\be\mathcal A}^{(r),\lle \Delta}\colon H^{r}(S,\mathcal A)\to H^{r}(\spp_{\et},\mathcal A\e)^{\Delta}$ for $r=1$ and $2$.

\begin{lemma} \label{prea} Assume that $2$ is invertible on $S$. Then there exists a canonical isomorphism of $2$-torsion abelian groups
\[
\krn\e{\rm Res}_{\lbe\mathcal A}^{(3)}=\displaystyle
\prod_{\e v\lle\in\lle\si^{\lle\prime}_{\rm real}}\!\pi_{0}(\lbe A(F_{\be v}))^{\e\prime} ,
\]
where, for each $v\lle\in\lle\si^{\lle\prime}_{\rm real}$, $\pi_{0}(\lbe A(F_{\be v}))^{\e\prime}$ is the group \eqref{pio}.
\end{lemma}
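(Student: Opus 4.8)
The plan is to compute $\krn\,{\rm Res}_{\be\mathcal A}^{(3)}$ directly using the comparison of étale cohomology of the Néron model $\mathcal A$ over $S$ with Galois cohomology over $F$, together with the arithmetic duality results of \cite[II, \S5]{adt}. Recall from Remark \ref{ntor}(a) that $\krn\,{\rm Res}_{\be\mathcal A}^{(3)}$ is a subgroup of $H^{\le 3}(S_{\et},\mathcal A)_{2}$, so it is automatically a $2$-torsion group; this is why only the $2$-primary (real) contribution can survive. First I would identify $H^{\le 3}(S_{\et},\mathcal A)$ itself. Since $2$ is invertible on $S$ and $\mathcal A$ is an abelian scheme, the localization sequence comparing $S_{\et}$-cohomology of $\mathcal A$ with $F$-Galois cohomology of $A$ degenerates in the expected range, and by the duality theorems of \cite[II, \S5]{adt} the group $H^{\le 3}(S_{\et},\mathcal A)$ is concentrated at the real (archimedean) places, being built from the modified Tate cohomology $\widehat H^{\le r}(\Delta_{\e v}, -)$ at those places.

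\textbf{The archimedean contribution.} The key step is to show that the restriction map on $H^{\le 3}$ is, place by place, the natural restriction on the real local factors, and that these local factors are exactly the groups $\pi_{0}(\lbe A(F_{\be v}))^{\e\prime}$ of \eqref{pio}. Concretely, for a real prime $v$ of $F$, \cite[II, \S5]{adt} identifies the local contribution to $H^{\le 3}(S_{\et},\mathcal A)$ with the Tate cohomology $\widehat H^{\le 0}(\gal(\mathbb C/F_{\be v}), A(\mathbb C))$, which by local duality (the real analogue of \cite[I, Theorem 3.2]{adt}) is dual to $\pi_{0}(\lbe A^{\lbe t}\lbe(F_{\be v}))$, i.e.\ it is precisely ${\rm Hom}(\pi_{0}(\lbe A^{\lbe t}\lbe(F_{\be v})),\Z/2)=\pi_{0}(\lbe A(F_{\be v}))^{\e\prime}$. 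Thus before taking kernels, $H^{\le 3}(S_{\et},\mathcal A)\simeq\prod_{v\,{\rm real}}\pi_{0}(\lbe A(F_{\be v}))^{\e\prime}$, the product over \emph{all} real primes of $F$.

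\textbf{Cutting down to ramified reals.} The map ${\rm Res}_{\be\mathcal A}^{(3)}$ is then computed on each such local factor by comparing the real place $v$ of $F$ with the place(s) $w\mid v$ of $K$. If $v$ splits or is complex in $K$, the corresponding factor of $H^{\le 3}(\spp_{\et},\mathcal A)$ receives $\pi_{0}(\lbe A(F_{\be v}))^{\e\prime}$ injectively (the local restriction to a copy of $F_{\be v}$, or to $K_{\lbe w}=\mathbb C$ which kills the Tate group only via periodicity — here one must check the map is injective), so the kernel is zero there. If $v$ is real and ramifies in $K$, then $K_{\lbe w}=\mathbb C$, the target local Tate cohomology $\widehat H^{\le 0}(\gal(\mathbb C/\mathbb C),-)$ vanishes, and the entire factor $\pi_{0}(\lbe A(F_{\be v}))^{\e\prime}$ lies in the kernel. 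Hence $\krn\,{\rm Res}_{\be\mathcal A}^{(3)}=\prod_{v\in\si^{\lle\prime}_{\rm real}}\pi_{0}(\lbe A(F_{\be v}))^{\e\prime}$, using that $\si^{\lle\prime}_{\le\rm real}$ is exactly the set of real primes of $F$ ramifying in $K$.

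\textbf{Main obstacle.} The delicate point, and the one I expect to require the most care, is the local computation at the real places in degree $3$: one must invoke the correct real-place duality from \cite[I and II]{adt} to match $\widehat H^{\le 0}(\gal(\mathbb C/F_{\be v}), A(\mathbb C))$ with $\pi_{0}(\lbe A(F_{\be v}))^{\e\prime}$ with the right degree shift, and then verify that the comparison map on these Tate groups is the one induced by restriction along $\gal(\mathbb C/K_{\lbe w})\hookrightarrow\gal(\mathbb C/F_{\be v})$. Establishing that this restriction is injective at unramified/split reals (so those factors contribute nothing) and identically zero at ramified reals (so those factors contribute everything) is where the $2$-torsion nature of everything — forced by periodicity of the Tate cohomology of $\Delta_{\e v}=\Z/2$ — must be used systematically.
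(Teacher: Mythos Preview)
Your approach is essentially the paper's: reduce to local contributions at archimedean places via \cite[II, Proposition 5.1(a)]{adt}, then identify each real local factor using \cite[I, Remark 3.7]{adt}, and observe that only the real primes ramifying in $K$ survive in the kernel. Two small corrections: the local contribution at a real $v$ is $H^{3}(F_{\be v},A)$, which under periodicity is $\widehat H^{\le 1}$ rather than $\widehat H^{\le 0}$ (this is what \cite[I, Remark 3.7]{adt} actually identifies with $\pi_{0}(A^{t}(F_{\be v}))^{D}$); and your ``splits or is complex in $K$'' case is muddled---for a real $v$ in a quadratic extension the only possibilities are that $v$ splits (two real places above, restriction is the diagonal and hence injective) or $v$ ramifies ($K_{w}=\mathbb C$, target vanishes), so there is nothing further to check at the unramified primes.
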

\begin{proof} By Remark \ref{ntor}(a) and \cite[Proposition II.5.1(a), p.~197]{adt}, we have
\[
\begin{array}{rcl}
\krn\e{\rm Res}_{\lbe\mathcal A}^{(3)}&=&\krn[H^{\le 3}\lbe(S_{\et},\mathcal A)_{2}\to H^{\le 3}\lbe(\spp_{\et},\mathcal A)_{2}\lbe]\\
&\simeq&{\displaystyle\prod_{\e \text{$v$ real}}}\krn[H^{\lle 3}\lbe(F_{\be v}, A)\to \prod_{\e w|v}\!H^{\lle 3}\lbe(K_{w}, A)\lbe]=\displaystyle\prod_{v\e\in\e\si^{\prime}_{\lle\rm real}}\!\!H^{\lle 3}\lbe(F_{\be v}, A\le).
\end{array}
\]
The proof is now completed by \cite[I, Remark 3.7, p.~46]{adt}, which shows that, for every $v\e\in\e\si^{\le\prime}_{\lle\rm real}$, there exists a canonical isomorphism of $2$-torsion abelian groups
$H^{\le 3}\lbe(F_{\be v}, A\le)\simeq\pi_{0}(\lbe A(F_{\be v}))^{\e\prime}$.
\end{proof}

\begin{theorem}\label{abv} Let $F$ be a global field of characteristic different from $2$, $K\be/\be F$ a quadratic Galois extension with Galois group $\Delta$ and $\si$ a nonempty finite set of primes of $F$ containing the archimedean primes, the non-archimedean primes that ramify in $K$ and all primes $v$ such that ${\rm char}\e k(v)=2$. Set $S=\spec \mathcal O_{\lbe F,\e \Sigma}$ and $\spp =\mathcal O_{\lbe K,\e \Sigma_{K}}$ and let $\mathcal A$ be an abelian scheme over $S$ with generic fiber $A$. Then there exist canonical exact sequences of torsion abelian groups	
\begin{enumerate}
\item[(i)]
\[
\begin{array}{rcl}	
0&\to& H^{-1}\lbe(\Delta, A(K))\to\sha^{1}_{\Sigma}(F,A\e)\to \sha^{1}_{\Sigma}(K,A\e)^{\Delta}\to A(F\e)/N_{\be K\lbe/\lbe F}A(K)\\\\
&\to&\krn[H^{2}(S,\mathcal A)\to H^{2}(\spp_{\et},\mathcal A\e)]\to {}_{N}\sha^{1}_{\Sigma}(K,A\e)/(1\!-\!\tau)\sha^{1}_{\Sigma}(K,A\e)\to 0,
\end{array}
\]
where $\tau$ is the nontrivial element in $\Delta$, and
\item[(ii)]
\[
\begin{array}{rcl}	
H^{2}(S,\mathcal A)&\to& H^{2}(\spp_{\et},\mathcal A\e)^{\Delta}\to
\sha^{1}_{\Sigma}(F,A\e)/N_{\be K\lbe/\lbe F}\le\sha^{1}_{\Sigma}(K,A\e)\\
&\to&\displaystyle\prod_{\e v\lle\in\lle\si^{\lle\prime}_{\rm real}}\!\!\pi_{0}(\lbe A(F_{\be v}))^{\e\prime}\to H^{\lbe -1}_{\lbe *}\be(\Delta,H^{\le 2}\lbe(\spp_{\et}, \mathcal A\e))\to 0,
\end{array}
\]
where, for each $v\lle\in\lle\si^{\prime}_{\rm real}$, $\pi_{0}(\lbe A(F_{\be v}))^{\e\prime}$ is the group \eqref{pio}.
\end{enumerate}
\end{theorem}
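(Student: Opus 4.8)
The plan is to deduce both exact sequences from Theorem \ref{main} applied to the pair $(f,\mathcal A)$, taking $r=1$ for (i) and $r=2$ for (ii), and then to translate each term of the resulting sequence into the stated arithmetic group. First I would check that $(f,\mathcal A)$ is an admissible quadratic Galois pair in the sense of Definition \ref{qadm}: $f\colon\spp\to S$ is a quadratic Galois covering by construction, $\mathcal A$ is smooth, commutative and projective (hence quasi-projective) over $S$ with connected (abelian-variety) fibres, and condition (iii) is vacuous because $\si$ contains every prime $v$ with ${\rm char}\,k(v)=2$, so $2$ is invertible on $S$ and no point of $S$ has residue characteristic $2$.

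For (i) I would apply Theorem \ref{main} with $r=1$ and $G=\mathcal A$ and identify its five terms. The isomorphism \eqref{tor1} and its analogue over $K$ give $H^{1}(S_{\et},\mathcal A)\cong\sha^{1}_{\Sigma}(F,A)$ and $H^{1}(\spp_{\et},\mathcal A)^{\Delta}\cong\sha^{1}_{\Sigma}(K,A)^{\Delta}$. Since $\mathcal A$ is the N\'eron model of $A$ we have $\mathcal A(S)=A(F)$ and $\mathcal A(\spp)=A(K)$, so $\cok\,{\rm Cores}_{\mathcal A}^{(0)}=A(F)/N_{K/F}A(K)$; the term $\krn\,{\rm Res}_{\mathcal A}^{(2)}$ is by definition $\krn[H^{2}(S,\mathcal A)\to H^{2}(\spp_{\et},\mathcal A)]$; and the last term becomes ${}_{N}\sha^{1}_{\Sigma}(K,A)/(1-\tau)\sha^{1}_{\Sigma}(K,A)$ by \eqref{h1s}. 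Finally I would splice in the left segment $0\to H^{-1}(\Delta,A(K))\to\sha^{1}_{\Sigma}(F,A)\to\sha^{1}_{\Sigma}(K,A)^{\Delta}$: the kernel of the leftmost map of Theorem \ref{main} is $\krn\,{\rm Res}_{\mathcal A}^{(1)}$, which the discussion preceding this theorem (via \eqref{ldg}) identifies with $H^{-1}(\Delta,A(K))$.

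For (ii) I would apply Theorem \ref{main} with $r=2$. The same identifications give $\cok\,{\rm Cores}_{\mathcal A}^{(1)}\cong\sha^{1}_{\Sigma}(F,A)/N_{K/F}\sha^{1}_{\Sigma}(K,A)$, Lemma \ref{prea} supplies $\krn\,{\rm Res}_{\mathcal A}^{(3)}\cong\prod_{v\in\si^{\prime}_{\rm real}}\pi_{0}(A(F_{v}))^{\prime}$, and the two outer terms are already in the stated form. No left extension is needed here, so (ii) is exactly the $r=2$ sequence after substitution.

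The main obstacle is bookkeeping of compatibilities rather than any new idea. One must verify that, under \eqref{tor1} and $\mathcal A(S)=A(F)$, the corestriction maps ${\rm Cores}_{\mathcal A}^{(0)}$ and ${\rm Cores}_{\mathcal A}^{(1)}$ of \eqref{cores} correspond to the arithmetic norm maps $N_{K/F}$ on $A(K)$ and on $\sha^{1}_{\Sigma}(K,A)$, and that \eqref{tor1} is $\Delta$-equivariant so that passage to $\Delta$-invariants is legitimate. These are naturality properties of the norm morphism \eqref{tmor} and of the comparison maps $H^{r}(S_{\et},\mathcal A)\to H^{r}(F,A)$ relating the cohomology of the arithmetic scheme to the Galois cohomology of the generic fibre; once they are in place, both sequences follow by direct substitution into Theorem \ref{main}.
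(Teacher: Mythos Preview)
Your proposal is correct and follows essentially the same route as the paper: apply Theorem \ref{main} to the admissible quadratic Galois pair $(f,\mathcal A)$ with $r=1$ and $r=2$, then identify the terms via the isomorphisms \eqref{tor1} over $F$ and $K$, the N\'eron mapping property $\mathcal A(S)=A(F)$, and Lemma \ref{prea} for $\krn\,{\rm Res}_{\mathcal A}^{(3)}$. Your extra care in verifying admissibility and in naming the compatibilities (equivariance of \eqref{tor1}, agreement of ${\rm Cores}^{(r)}_{\mathcal A}$ with the arithmetic norms) is all that lies behind the paper's one-line proof; note also that condition (iii) of Definition \ref{qadm} would in any case be satisfied since the fibres of an abelian scheme are abelian varieties, hence semiabelian.
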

\begin{proof} The theorem is immediate from Theorem \ref{main} applied to $r=1$ and $2$ and $(\le f,G\le)=(\le f,\mathcal A\e)$ using Lemma \ref{prea} and the canonical isomorphisms \eqref{tor1} over $F$ and over $K$.	
\end{proof}

\section{Applications to the Brauer group}\label{bra}

If $f\colon \spp\to S$ is a quadratic Galois covering of locally noetherian schemes, then $(\e f,\bg_{m,\le S}\lbe)$ is an admissible quadratic Galois pair (see Definition \ref{qadm}). The following proposition combines Theorem \ref{boo} for $r=2$ and Theorem \ref{main} for $r=1$ and $r=2$ when $(\e f,G\e)=(\e f,\bg_{m,\le S}\lbe)$ using the notations \eqref{eas1}-\eqref{eas3}.

\begin{theorem}\label{gat} Let $f\colon \spp\to S$ be a quadratic Galois covering of locally noetherian schemes with Galois group $\Delta$. Then there exist canonical exact sequences of abelian groups
\begin{enumerate}
\item[(i)] 
\[
\begin{array}{rcl}
\pic S\!&\!\to& (\pic\spp\le)^{\Delta}\to\varGamma( S,\mathcal O_{\lbe S}\lbe)^{*}\be/N_{\spp\!\lbe/\lbe S}\varGamma(\spp\!,\mathcal O_{\lbe S^{\prime}}\lbe)^{*}\\
&\overset{\!\!\gamma_{1}}{\to}&\brp\lbe(\spp\be/\lbe S\e)\to H^{\le -1}_{\lbe *}\lbe(\Delta,\pic \spp\lle)\to 0,
\end{array}
\]
where $\gamma_{\lle 1}$ is the map \eqref{gr2} and $H^{\le -1}_{\lbe *}\lbe(\Delta,\pic \spp\lle)$ is the group \eqref{h1s} associated to $(r, G\e)=(1,\bg_{m,\le S}\le)$ and $\brp\lbe(\spp\be/\lbe S\e)$ is the relative cohomological Brauer group \eqref{relb},
\item[(ii)] 
\[
\begin{array}{rcl}
\brp S\!&\!\to& (\brp\spp\le)^{\Delta}\to\pic S/N_{\spp\!\lbe/\lbe S}\le\pic\spp\\
&\overset{\!\!\gamma_{2}}{\to}&\krn[H^{\lle 3}\lbe(S_{\et},\bg_{m})\!\to\! H^{\le 3}\lbe(\spp_{\et}, \bg_{m})]\to H^{\le -1}_{\lbe *}\lbe(\Delta,\brp\spp\lle)\to 0,
\end{array}
\]
where $\gamma_{\lle 2}$ is the map \eqref{gr2} and $H^{\le -1}_{\lbe *}\lbe(\Delta,\brp\spp\lle)$ is the group \eqref{h1s} associated to $(r, G\e)=(2,\bg_{m,\le S}\le)$, and
\item[(iii)]
\[
0\to \krn\e \gamma_{\lle 2}\to \brp\lle \spp\!/\e{\rm Res}_{\le\spp\!/\lbe S}\e\brp\lle S\overset{\!\!\lbe\beta_{ 2}}{\to}{}_{N}\brp\lle\spp\to \cok\e \gamma_{\lle 2}\to 0,
\]
where $\beta_{ 2}$ and $\gamma_{\lle 2}$ are the maps \eqref{brp} and \eqref{gr2}, respectively, for $(r, G\e)=(2,\bg_{m,\le S}\le)$.
\end{enumerate}
\end{theorem}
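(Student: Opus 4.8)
The plan is to derive all three sequences by specializing Theorems \ref{main} and \ref{boo} to the admissible quadratic Galois pair $(\e f,\bg_{m,\le S}\lbe)$, so that the entire content reduces to rewriting the abstract cohomology groups appearing there in terms of the classical invariants of $S$ and $\spp$. The identifications I would use are $H^{\le 1}(S_{\et},\bg_{m})=\pic S$ and $\brp S=H^{\le 2}(S_{\et},\bg_{m})$ of Section \ref{pre} (and their analogues over $\spp$), together with the notation \eqref{eas1}--\eqref{eas3} for the images and kernels of the restriction and corestriction maps.

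First I would obtain (i) by invoking Theorem \ref{main} with $r=1$ and $G=\bg_{m,\le S}$. Its first two terms read $\pic S\to(\pic\spp)^{\Delta}$. The term $\cok\e{\rm Cores}_{\e\bg_{m,\le S}}^{(0)}$ is the cokernel of the degree-zero corestriction, which by the defining composition \eqref{cores} is the norm $N_{\spp\!\lbe/\lbe S}\colon\varGamma(\spp\!,\mathcal O_{\lbe S^{\prime}})^{*}\to\varGamma(S,\mathcal O_{\lbe S})^{*}$ on global units, whence this term is $\varGamma(S,\mathcal O_{\lbe S})^{*}/N_{\spp\!\lbe/\lbe S}\varGamma(\spp\!,\mathcal O_{\lbe S^{\prime}})^{*}$. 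The following term $\krn\e{\rm Res}_{\e\bg_{m,\le S}}^{(2)}$ is by definition \eqref{relb} the relative cohomological Brauer group $\brp(\spp\be/\lbe S)$, and the last term is $H^{\le -1}_{\lbe *}(\Delta,\pic\spp)$ by \eqref{h1s}. Part (ii) is the identical argument for $r=2$: now $H^{\le 2}(S_{\et},\bg_{m})$ and $H^{\le 2}(\spp_{\et},\bg_{m})$ become $\brp S$ and $\brp\spp$, the degree-one corestriction ${\rm Cores}_{\e\bg_{m,\le S}}^{(1)}$ is the norm $N_{\spp\!\lbe/\lbe S}\colon\pic\spp\to\pic S$ so that $\cok\e{\rm Cores}_{\e\bg_{m,\le S}}^{(1)}=\pic S/N_{\spp\!\lbe/\lbe S}\pic\spp$, and $\krn\e{\rm Res}_{\e\bg_{m,\le S}}^{(3)}=\krn[H^{\le 3}(S_{\et},\bg_{m})\to H^{\le 3}(\spp_{\et},\bg_{m})]$.

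For (iii) I would apply Theorem \ref{boo} with $r=2$ and $G=\bg_{m,\le S}$ with no further ingredient. By \eqref{eas1} the term $\cok\e{\rm Res}_{\e\bg_{m,\le S}}^{(2)}$ equals $\brp\spp/{\rm Res}_{\spp\!\lbe/\lbe S}\brp S$, and by \eqref{eas2} the term $\krn\e{\rm Cores}_{\e\bg_{m,\le S}}^{(2)}$ equals ${}_{N}\brp\spp$; the outer terms $\krn\gamma_{2}$ and $\cok\gamma_{2}$, together with the map $\beta_{2}$ (the map \eqref{br} for $r=2$) and $\gamma_{2}$ (the map \eqref{gr2}), are then already in the stated form.

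The argument carries no genuine difficulty, the author having in effect prescribed it. The single point that demands care is the verification that the corestrictions ${\rm Cores}_{\e\bg_{m,\le S}}^{(0)}$ and ${\rm Cores}_{\e\bg_{m,\le S}}^{(1)}$ coincide with the classical norm maps on units and on Picard groups respectively; this I would extract from the factorization \eqref{nis} of the $n$-th power morphism through the Weil-restriction norm together with the defining composition \eqref{cores} (for the degree-zero case this is already recorded in \eqref{bmor0}). Granting this identification, parts (i)--(iii) follow by direct substitution into Theorems \ref{main} and \ref{boo}.
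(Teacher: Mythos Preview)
Your proposal is correct and follows exactly the route the paper takes: the sentence preceding Theorem \ref{gat} states that it ``combines Theorem \ref{boo} for $r=2$ and Theorem \ref{main} for $r=1$ and $r=2$ when $(\e f,G\e)=(\e f,\bg_{m,\le S}\lbe)$ using the notations \eqref{eas1}--\eqref{eas3}'', and no further proof is supplied. Your identifications of $\cok\e{\rm Cores}^{(0)}$ and $\cok\e{\rm Cores}^{(1)}$ with the quotients of units and Picard groups by norms, and of $\krn\e{\rm Res}^{(2)}$ with $\brp(\spp\be/\lbe S)$, are precisely the substitutions intended.
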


\begin{corollary}\label{cor0} Let $\spp\to S$ be a quadratic Galois covering of locally noetherian schemes with Galois group $\Delta=\{1,\tau \}$. 
\begin{enumerate}
\item[(i)] Assume that the map \eqref{gr2} associated to $(r, G\e)=(1,\bg_{m,\le S}\le)$
\[
\gamma_{\le 1}\colon \varGamma( S,\mathcal O_{\lbe S}\lbe)^{*}\be/N_{\spp\!\lbe/\lbe S}\varGamma(\spp\!,\mathcal O_{\lbe S^{\prime}}\lbe)^{*}\to\brp\lbe(\spp\be/\lbe S\e) 
\]
is the zero map. Then there exists a canonical exact sequence of abelian groups
\[
\begin{array}{rcl}
0&\to& H^{\le -1}_{\lbe *}\lbe(\Delta,\pic \spp\le)\to \brp S\to (\brp\spp\e)^{\Delta}
\to\pic S/N_{\spp\!\lbe/\lbe S}\le\pic\spp\\
&\to&\krn[H^{\lle 3}\lbe(S_{\et},\bg_{m})\to H^{\le 3}\lbe(\spp_{\et}, \bg_{m})]\to H^{\le -1}_{\lbe *}\lbe(\Delta,\brp\spp\le)\to 0.
\end{array}
\]
\item[(ii)] Assume that the map \eqref{gr2} associated to $(r, G\e)=(2,\bg_{m,\le S}\le)$
\[
\gamma_{\le 2}\colon \pic S/N_{\spp\!\lbe/\lbe S}\le\pic\spp\to\krn[H^{\lle 3}\lbe(S_{\et},\bg_{m})\!\to\! H^{\le 3}\lbe(\spp_{\et}, \bg_{m})]
\]
is the zero map. Then there exist canonical exact sequences of abelian groups
\begin{enumerate}
\item[(a)]
\[
\brp S\to (\brp\spp\le)^{\Delta}\to\pic S/N_{\spp\!\lbe/\lbe S}\le\pic\spp\to 0,
\]
\item[(b)]
\[
\begin{array}{rcl}
0\to\pic S/\le N_{\spp\!\lbe/\lbe S}\le\pic\spp&\to& \brp\lle \spp/\le {\rm Res}_{\le\spp\!/\lbe S}\e\brp\lle S\to {}_{N}\brp\lle\spp\\
&\to& H^{\lle 3}\lbe(S_{\et},\bg_{m})\to H^{\le 3}\lbe(\spp_{\et}, \bg_{m})
\end{array}
\]
and
\item[(c)]
\[
0\to\brp\lle \spp/(\brp\spp\le)^{\Delta}\to {}_{N}\brp\lle\spp\to H^{\lle 3}\lbe(S_{\et},\bg_{m})\!\to\! H^{\le 3}\lbe(\spp_{\et}, \bg_{m}).
\]
\end{enumerate}
\end{enumerate}	
\end{corollary}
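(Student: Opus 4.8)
The plan is to deduce all four exact sequences from Theorem \ref{gat}, which applies because $(\e f,\bg_{m,\le S}\lbe)$ is an admissible quadratic Galois pair, together with two facts from the discussion preceding Theorem \ref{main}: for $r=2$ the map $\beta_{\le 2}$ is induced by $1-\tau$ on $\brp\spp$, and accordingly $\krn\e\beta_{\le 2}=\cok[\e\brp S\to(\brp\spp)^{\Delta}\e]=(\brp\spp)^{\Delta}\be/{\rm Res}_{\spp/S}\brp S$. Since the image of restriction lies in the invariants, I will also use the identification $\brp(\spp\be/\lbe S\e)=\krn[\e\brp S\to\brp\spp\e]=\krn[\e\brp S\to(\brp\spp)^{\Delta}\e]$.

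For part (i), the hypothesis $\gamma_{\le 1}=0$ collapses the tail of the sequence of Theorem \ref{gat}(i): the arrow into $\brp(\spp\be/\lbe S\e)$ vanishes, so the surjection $\brp(\spp\be/\lbe S\e)\twoheadrightarrow H^{\le -1}_{\lbe *}\lbe(\Delta,\pic\spp)$ becomes an isomorphism $\brp(\spp\be/\lbe S\e)\isoto H^{\le -1}_{\lbe *}\lbe(\Delta,\pic\spp)$. Using $\brp(\spp\be/\lbe S\e)=\krn[\e\brp S\to(\brp\spp)^{\Delta}\e]$, this gives an exact sequence $0\to H^{\le -1}_{\lbe *}\lbe(\Delta,\pic\spp)\to\brp S\to(\brp\spp)^{\Delta}$. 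I would then splice this onto the sequence of Theorem \ref{gat}(ii), which begins with the same restriction arrow $\brp S\to(\brp\spp)^{\Delta}$; exactness at $(\brp\spp)^{\Delta}$ is exactly the exactness already provided by Theorem \ref{gat}(ii), so the concatenation is immediately the desired sequence.

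For part (ii) I would dispose of (a) and (b) first. Setting $\gamma_{\le 2}=0$ in Theorem \ref{gat}(ii) makes $(\brp\spp)^{\Delta}\to\pic S/N_{\spp/S}\pic\spp$ surjective, which is (a). For (b), setting $\gamma_{\le 2}=0$ in Theorem \ref{gat}(iii) gives $\krn\e\gamma_{\le 2}=\pic S/N_{\spp/S}\pic\spp$ and $\cok\e\gamma_{\le 2}=\krn[\e H^{3}(S_{\et},\bg_m)\to H^{3}(\spp_{\et},\bg_m)\e]$; rewriting the resulting surjection ${}_{N}\brp\spp\twoheadrightarrow\krn[\e H^{3}(S_{\et},\bg_m)\to H^{3}(\spp_{\et},\bg_m)\e]$ as two-term exactness of ${}_{N}\brp\spp\to H^{3}(S_{\et},\bg_m)\to H^{3}(\spp_{\et},\bg_m)$ produces (b) verbatim.

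The only genuinely new step is (c), which I would obtain from (b). Because $\beta_{\le 2}$ is induced by $1-\tau$, it annihilates ${\rm Res}_{\spp/S}\brp S$ and, by the kernel computation above, has kernel $(\brp\spp)^{\Delta}\be/{\rm Res}_{\spp/S}\brp S$; hence it factors through the quotient map $\brp\spp\be/{\rm Res}_{\spp/S}\brp S\twoheadrightarrow\brp\spp\be/(\brp\spp)^{\Delta}$ as an injection $\brp\spp\be/(\brp\spp)^{\Delta}\hookrightarrow{}_{N}\brp\spp$ with the same image as $\beta_{\le 2}$. By (b) that image is $\krn[\e{}_{N}\brp\spp\to H^{3}(S_{\et},\bg_m)\e]$, and exactness at $H^{3}(S_{\et},\bg_m)$ is inherited from (b); this yields the sequence of (c). The main obstacle I anticipate is precisely this last step: one must check that the kernel of $\beta_{\le 2}$ supplied by the $1-\tau$ description coincides with the kernel $(\brp\spp)^{\Delta}\be/{\rm Res}_{\spp/S}\brp S$ of the quotient map, so that $\beta_{\le 2}$ descends to $\brp\spp\be/(\brp\spp)^{\Delta}$ without altering its image — the rest is bookkeeping with the vanishing of $\gamma_{\le 1}$ and $\gamma_{\le 2}$.
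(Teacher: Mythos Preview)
Your proposal is correct and follows exactly the approach the paper intends: the corollary is stated immediately after Theorem \ref{gat} with no separate proof, and your argument is precisely the natural unpacking of that theorem together with the description of $\beta_{r}$ via $1-\tau$ given just before Theorem \ref{main}. The ``main obstacle'' you flag for (c) is not an obstacle at all: the identification $\krn\e\beta_{2}=(\brp\spp)^{\Delta}\!/\,{\rm Res}_{\spp/S}\brp S$ is established verbatim in the paragraph preceding Theorem \ref{main}, so the descent of $\beta_{2}$ to an injection on $\brp\spp/(\brp\spp)^{\Delta}$ is immediate.
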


\begin{remarks} \label{brr}\indent
\begin{enumerate}
\item[(a)] If there exists an ample invertible sheaf on $S$ and $S$ is noetherian and separated, then the canonical map $\br S\hookrightarrow(\brp S)_{\rm tors}$ is an isomorphism of torsion abelian groups \cite{dJ}. If, in addition, the strictly local rings \footnote{ I.e., the strict henselizations of the local rings.} of $S$ are factorial, then $\brp S$ is a torsion abelian group \cite[II, Theorem 1.4, p.~71]{dix}. Thus, if $S$ and $\spp$ admit ample invertible sheaves, have factorial strictly local rings and are noetherian and separated, then there exist analogs of Theorem \ref{gat} and Corollary \ref{cor0} where the cohomological Brauer groups $\brp S$ and $\brp \spp$ are replaced with the Brauer groups $\br S$ and $\br \spp$ of equivalence classes of Azumaya algebras on $S$ and $\spp$, respectively.
\item[(b)] The hypothesis in assertion (i) of Corollary \ref{cor0} certainly holds if the norm map $N_{\spp\!\lbe/\lbe S}\colon \varGamma(\be \spp\be,\be\mathcal O_{\lbe S^{\prime}}\lbe)^{*}\to \varGamma(\be S,\mathcal O_{\lbe S}\lbe)^{*}$ is surjective. This is the case, for example, if $S$ and $\spp$ admit proper and flat morphisms to a locally noetherian scheme $T$ with $\varGamma(\lbe T,\mathcal O_{\le T}\lbe)^{*}=(\varGamma(\lbe T,\mathcal O_{\le T}\lbe)^{*})^{2}$ whose geometric fibers are reduced and connected. Indeed, 
by \cite[Exer. 3.11, pp.~25 and 64\e]{klei}, $\varGamma(\lbe \spp\be,\be\mathcal O_{\lbe S^{\prime}}\lbe)^{*}=\varGamma(\lbe S,\mathcal O_{\lbe S}\lbe)^{*}=\varGamma(\le T,\mathcal O_{\lbe T}\lbe)^{*}$, whence $\Delta$ acts trivially on $\varGamma(\lbe \spp\be,\be\mathcal O_{\lbe S^{\prime}}\lbe)^{*}$ and $N_{\spp\!\lbe/\lbe S}\varGamma(\be \spp\be,\be\mathcal O_{\lbe S^{\prime}}\lbe)^{*}=(\varGamma(\lbe T,\mathcal O_{\le T}\lbe)^{*})^{2}=\varGamma(\lbe T,\mathcal O_{\le T}\lbe)^{*}=\varGamma(\lbe S,\mathcal O_{\lbe S}\lbe)^{*}$. In particular, Corollary \ref{cor0}(i) applies to quadratic Galois coverings of proper and geometrically integral schemes over a quadratically closed field (e.g., an algebraically closed field or a separably closed field of characteristic different from $2$).
\item[(c)] The hypothesis in assertion (ii) of Corollary \ref{cor0} obviously holds if either $N_{\spp\!\lbe/\lbe S}\le\pic\spp=\pic S$ or $H^{\lle 3}\lbe(S_{\et},\bg_{m})\!\to\! H^{\le 3}\lbe(\spp_{\et}, \bg_{m})$ is injective. The preceding map is injective in the following cases.
\begin{itemize}
\item $S$ is a smooth and affine curve over a $p$-adic field. Indeed, in this case  $H^{\lle 3}\lbe(S_{\et},\bg_{m})=0$ by \cite[Corollary 4.10]{svh}.
\item $2$ is invertible on $S$ and ${\rm cd}_{\lle 2}(S_{\et})\leq 2$. See Remark \ref{ntor}(b). The indicated conditions hold, for example, if $S$ is an affine $2$-dimensional scheme of finite type over a separably closed field of characteristic different from $2$ \cite[XIV, Corollary 3.2]{sga4}.
\end{itemize}
\end{enumerate}	
\end{remarks}

\begin{examples}\label{ref} The following examples were suggested by the referee.

Let $k$ be an algebraically closed field of characteristic different from $2$, set $g_{\le 1}(x,y)=(x-a_{\le 1})\dots(x-a_{\le n})(xy-1)\in k[x,y]$, where $n\geq 1$ and $a_{i}\neq 0$ for every $i$, and let $g_{\le 2}(x,y)=(x+a_{\le 1}y)\dots(x+a_{\le n}y)(x-1)\in k[x,y]$, where $n\equiv0\pmod 2$. For $g=g_{\le 1}$ or $g_{\le 2}$, set $A=k[x,y][\e g(x,y)^{-1}]$, $B=A[\e\sqrt{g(x,y)}\e]$, $S=\spec A$ and $\spp=\spec B$. Then the canonical morphism $\spp\to S$ is a quadratic Galois covering of noetherian schemes and the exact sequence of Theorem \ref{gat}(i) reduces to an exact sequence of abelian groups
\[
\hskip 2cm 0\to\frac{\varGamma( S,\mathcal O_{\lbe S}\lbe)^{*}}{N_{\spp\!\lbe/\lbe S}\varGamma(\spp\!,\mathcal O_{\lbe S^{\prime}}\lbe)^{*}}\overset{\!\be\gamma_{1}}{\lra}\brp\lbe(\spp\be/\lbe S\e)\to H^{\le -1}_{\lbe *}\lbe(\Delta,\pic \spp\lle)\to 0
\]
which is isomorphic to the canonical exact sequence of finite $2$-torsion abelian groups
\[
0\to (\Z/2)^{n}\to(\Z/2)^{n+1}\overset{\!\be \pi}{\to}\Z/2\to 0,
\]
where $\pi$ is the canonical projection onto the last factor. See \cite[p.~3282, line 2, Theorem 2.2(b) and Propositions 3.1 and 3.4(b)]{f13} for $g=g_{\le 1}$ and \cite[Section 3.4, pp.~37-42 and Proposition 3.7(b), p.~29]{bul} for $g=g_{\le 2}$. In particular, $\gamma_{1}$ is {\it not} the zero map and therefore the hypothesis (as well as the conclusion) of Corollary \ref{cor0}(i) fails. On the other hand, the hypothesis (and therefore the conclusion) of Corollary \ref{cor0}(ii) holds since $\pic S=0$. When $g=g_{\le 1}$, \cite[Proposition 3.5 and its proof]{f13} shows that ${\rm Res}_{\le\spp\!/\lbe S}\colon \brp\lle S\to\brp\lle\spp$ is surjective, $\Delta$ acts trivially on $\brp\spp$ and $(\brp\lle\spp\e)_{2}\simeq(\Z/2)^{n+1}$, whence the exact sequences (a)-(c) in Corollary \ref{cor0}(ii) yield isomorphisms of abelian groups
\[
\krn[H^{\lle 3}\lbe(S_{\et},\bg_{m})\to H^{\le 3}\lbe(\spp_{\et}, \bg_{m})]\simeq (\brp\lle\spp\e)_{2}\simeq(\Z/2)^{n+1}.
\]

\end{examples}

\medskip

We conclude this paper by stressing the fact that Theorem \ref{gat} and Corollary \ref{cor0} apply only to {\it unramified} quadratic coverings of locally noetherian schemes (since every Galois covering is \'etale and, therefore, unramified). The behavior of the (cohomological) Brauer group functor under flat and {\it ramified} quadratic coverings has been discussed by a number of authors, at least in some particular cases. For instance, Ford \cite{f92,f17} and Skorobogatov \cite{sk} have discussed aspects of the indicated problem for certain types of algebraic varieties defined over an algebraically (or separably) closed field of characteristic different from $2$, using methods that are essentially different from those of this paper. In such investigations, quadratic {\it Galois} coverings sometimes play an important intermediate or auxiliary r\^{o}le (see, e.g., \cite[Theorems 2.1 and 2.2]{f92}). This suggests that the results of this section can be useful in future investigations on the behavior of the Brauer group functor under ramified quadratic coverings of {\it arbitrary} locally noetherian schemes.


\begin{thebibliography}{47}
		
\bibitem[1]{an} Anantharaman, S.: \emph{Sch\'emas en groupes, espaces homog\`enes et espaces alg\'ebriques sur une base de dimension 1}. Mem. Soc. Math. France, {\bf{33}} (1973), 5--79.


\bibitem[2]{aw} Atiyah, M. and Wall, C.T.C. \emph{ Cohomology of
groups.} In: Algebraic Number Theory (J.W.S. Cassels and A.
Fr\"ohlich, Eds.), 94--115. Academic Press, London, 1967.


\bibitem[3]{bea} Beauville, A. \emph{ On the Brauer group of Enriques surfaces.} Math. Res. Lett. {\bf{16}} (2009), no. 6, 927--934. 


\bibitem[4]{gfr} Bertapelle, A. and Gonz\'alez-Avil\'es, C.D.
\emph{ The Greenberg functor revisited.} Eur. J. Math. {\bf{4}} (2018), no. 4, 
1340--1389.


\bibitem[5]{bey} Beyl, R.: \emph{ The connecting morphism in the kernel-cokernel sequence.} Arch. der Math. {\bf{32}} (1979), no. 4, 305--308.


\bibitem[6]{blr} Bosch, S., L\"{u}tkebohmert, W. and Raynaud, M.
\emph{ N\'eron Models.} Springer Verlag, Berlin 1989.


\bibitem[7]{bul} Bulj, D.: \emph{ A study of divisors and algebras on a double cover of the affine plane}. Thesis (Ph.D.)--Florida Atlantic University, 2012. Available at \url{ https://fau.digital.flvc.org/islandora/object/fau\%3A3944}.


\bibitem[8]{c} Conrad, B.: \emph{ Finiteness theorems for algebraic groups over function fields.}
Comp. Math. {\bf{148}} (2012), no. 2, 555--639.

  
\bibitem[9]{cts87} Colliot-Th\'el\`ene, J.-L. and Sansuc, J.-J.: La descente sur les vari\'et\'es rationnelles, II. Duke Math. J. {\bf{54}}, no. 2 (1987), 375--492.
 
\bibitem[10]{dJ} de Jong, J.: A result of Gabber (unpublished). Available at \url{http://www.math.columbia.edu/~dejong/papers/2-gabber.pdf}.

\bibitem[11]{sga3}  Demazure, M. and Grothendieck, A., eds.: \emph{ Sch\'emas en groupes.} S\'eminaire de G\'eom\'etrie Alg\'ebrique du Bois Marie 1962-64 (SGA 3). Augmented and corrected 2008-2011 re-edition of the original by P.Gille and P.Polo. Available at \url{http://www.math.jussieu.fr/~polo/SGA3}. Reviewed
at \url{http://www.jmilne.org/math/xnotes/SGA3r.pdf}.



\bibitem[12]{f92} Ford, T.J.: \emph{ The Brauer group and ramified double covers of surfaces.} Comm. Algebra {\bf{20}} (1992), no. 12, 3793--3803.

\bibitem[13]{f13} Ford, T.J.: \emph{The relative Brauer group of an affine double plane.} Comm. Algebra {\bf{41}} (2013), no. 9, 3277--3298.
 
 

\bibitem[14]{f17} Ford, T.J.: \emph{ The Brauer group of an affine double plane associated to a hyperelliptic curve.} Comm. Algebra {\bf{45}} (2017), no. 4, 1416--1442. 


\bibitem[15]{ga07} Gonz\'alez-Avil\'es, C.D.\emph{ Capitulation, ambiguous classes and the cohomology of the units}. J. reine angew. Math. {\bf{613}} (2007), 75--97.

\bibitem[16]{ga10} Gonz\'alez-Avil\'es, C.D.\emph{ On N\'eron-Raynaud class groups of tori and	the capitulation problem}. J. reine angew. Math. {\bf{648}} (2010), 149--182.

\bibitem[17]{ga12} Gonz\'alez-Avil\'es, C.D.\emph{ On N\'eron class groups of abelian varieties}. J. reine angew. Math. {\bf{664}} (2012), 71--91.


\bibitem[18]{ga18a} Gonz\'alez-Avil\'es, C.D.\emph{ \v{C}ech cohomology and the Capitulation kernel}. Funct. Approx. Comment. Math. doi: 10.7169/facm/1758.

\bibitem[19]{ga18b} Gonz\'alez-Avil\'es, C.D.\emph{ The norm map and the capitulation kernel}. \url{https://arxiv.org/abs/1803.03574}


\bibitem[20]{gras} Gras, G. \emph{Class Field Theory. From Theory to Practice}. Springer Monographs in Mathematics. Springer-Verlag, Berlin, 2003.


\bibitem[21]{ega1} Grothendieck, A. and
Dieudonn\'e, J.: \emph{ \'El\'ements de g\'eom\'etrie alg\'ebrique I: Le langage des
sch\'emas.} Grund. der Math. Wiss. {\bf{166}} (1971).


\bibitem[22]{ega2} Grothendieck, A. and Dieudonn\'e, J.: \emph{ \'El\'ements de g\'eom\'etrie alg\'ebrique II: \'Etude globale \'el\'ementaire de
quelques classes de morphismes}. Publ. Math. IHES {\bf{8}} (1961).

	
	
\bibitem[23]{dix} Grothendieck, A.: \emph{ Le groupe de Brauer I-III}. In: Dix Expos\'es sur la cohomologie
des sch\'emas, North Holland, Amsterdam, 1968, 46--188.
	
	
\bibitem[24]{sga4} Grothendieck, A. et al.: \emph{ Th\'eorie des Topos et Cohomologie \'Etale des Sch\'emas.} S\'eminaire de G\'eom\'etrie Alg\'ebrique du Bois Marie 1963-64 (SGA 4). Lect. Notes in Math. 270, 305 and 370, Springer-Verlag 1973.
	


\bibitem[25]{sga1} Grothendieck, A.: \emph{ Rev\^{e}tements \'{E}tales et groupe fondamental.} S\'eminaire de G\'eom\'etrie Alg\'ebrique du Bois Marie 1960-61 (SGA 1). Lect. Notes in Math. 224, Springer-Verlag 1973.


\bibitem[26]{k86a} Katayama, S.-I.\emph{ Class number relations of algebraic tori. I.} Proc. Japan Acad. Ser. A. Math. Sci. {\bf{62}} (1986), no. 6, 216--218.



\bibitem[27]{k86b} Katayama, S.-I.\emph{ Class number relations of algebraic tori. II.} Proc. Japan Acad. Ser. A. Math. Sci. {\bf{62}} (1986), no. 8, 320--322.


 
\bibitem[28]{k87} Katayama, S.-I.\emph{ The Euler number and other arithmetical invariants for finite Galois extensions of algebraic number fields.} Proc. Japan Acad. Ser. A. Math. Sci. {\bf{63}} (1987), no. 2, 27--30.



\bibitem[29]{k89} Katayama, S.-I.\emph{ $E(K/k)$ and other arithmetical invariants for finite Galois extensions.} Nagoya Math. J. {\bf{114}} (1989), 135--142.

\bibitem[30]{k91} Katayama, S.-I.\emph{ Isogenous tori and the class number formulae.} J. Math. Kyoto Univ. {\bf{31}} (1991), no. 3, 679--694. 


\bibitem[31]{klei} Kleiman, S.: \emph{ The Picard scheme. Fundamental algebraic geometry}, 235--321, Math. Surveys Monogr., {\bf{123}}, Amer. Math. Soc., Providence, RI, 2005. 


\bibitem[32]{kps} Knus, M.--A., Parimala, R. and Srinivas, V.: \emph{ Azumaya algebras with involutions.} J. Algebra {\bf{130}} (1990), no. 1, 65--82. 


\bibitem[33]{ll01} Liu, Q. and Lorenzini, D.
\emph{ Special fibers of N\'eron models and wild ramification.} J.
reine angew. Math. {\bf{532}}, pp.179\e--222 (2001).


\bibitem[34]{mi1} Milne, J.S.:\emph{ \'Etale cohomology.}
Princeton University Press, Princeton, 1980.


\bibitem[35]{adt} Milne, J.S.: \emph{Arithmetic Duality Theorems}. Second Edition (electronic version), 2006.


\bibitem[36]{m91} Morishita, M. \emph{ On $S$-class number relations
of algebraic tori in Galois extensions of global fields.} Nagoya
Math. J. {\bf{124}}, pp.133-144 (1991).



\bibitem[37]{o85} Ono, T.: \emph{ A generalization of Gauss's theorem on the genera of quadratic forms.} Proc. Japan Acad. Ser. A Math. Sci. {\bf{61}} (1985), no. 4, 109--111.


\bibitem[38]{o87} Ono, T.: \emph{ On some class number relations for Galois extensions.} Nagoya Math. J. {\bf{107}} (1987), 121\e--133.


\bibitem[39]{ps} Parimala, R. and Srinivas, V.: \emph{ Analogues of the Brauer group for algebras with involution.} Duke Math. J. {\bf{66}} (1992), no. 2, 207--237.


\bibitem[40]{sa88} Sasaki, R.:\emph{ Some remarks to Ono's theorem on a
generalization of Gauss' genus theory.} Nagoya Math. J., {\bf{111}}
(1988), 131-142.

\bibitem[41]{svh} Scheiderer, C. and van Hamel, J.:\emph{ Cohomology of tori over p-adic curves}. Math. Ann. {\bf{326}} (2003), no. 1, 155--183.


\bibitem[42]{sha} Shatz,  S.: \emph{ Profinite groups, arithmetic, and geometry}, Annals of Math. Studies {\bf{67}}. Princeton Univ. Press 1972.


\bibitem[43]{sh77} Shyr, J.-M.\emph{ On some class number relations of algebraic tori.} Mich. Math. J. {\bf{24}} (1977), 365\e--377.


\bibitem[44]{sh79} Shyr, J.-M.\emph{ Class numbers of binary quadratic forms over algebraic number fields.} J. Reine Angew. Math. {\bf{307/308}} (1979), 353\e --364.


\bibitem[45]{sk} Skorobogatov, A.: \emph{ Cohomology and the Brauer group of double covers.} In: Brauer groups and obstruction problems, Progr. Math., vol. {\bf{320}}, Birkh\"auser/Springer, Cham, 2017, pp. 231--247.

\bibitem[46]{t} Tamme, G.: \emph{ Introduction to \'Etale Cohomology.} Translated from the German by Manfred Kolster. Universitext. Springer-Verlag, Berlin, 1994.
	

\bibitem[47]{v} Voskresenskii, V. \emph{ Algebraic Groups and Their
Birational Invariants.} Trans. Math. Monogr. vol. {\bf{179}}, Amer.
Math. Soc. 1998.

	
\end{thebibliography}
\end{document}